\providecommand{\abs}[1]{\lvert#1\rvert}
\providecommand{\norm}[1]{\lVert#1\rVert}
\newcommand\blfootnote[1]{%
  \begingroup
  \renewcommand\thefootnote{}\footnote{#1}%
  \addtocounter{footnote}{-1}%
  \endgroup
}
\def\XXint#1#2#3{{\setbox0=\hbox{$#1{#2#3}{\int}$ }
\vcenter{\hbox{$#2#3$ }}\kern-.6\wd0}}
\begin{document}

\numberwithin{equation}{section}
\newtheorem{cor}{Corollary}[section]
\newtheorem{df}[cor]{Definition}
\newtheorem{rem}[cor]{Remark}
\newtheorem{theo}[cor]{Theorem}
\newtheorem{pr}[cor]{Proposition}
\newtheorem{lem}[cor]{Lemma}
\newtheorem{conj}[cor]{Conjecture}

\renewcommand{\)}{\right)}
\newcommand{\mR}{{\mathbb R}}
\newcommand{\mE}{{\mathbb E}}
\newcommand{\mF}{{\mathbb F}}
\newcommand{\mC}{{\mathbb C}}

\renewcommand{\a}{{\alpha}}
\renewcommand{\o}{{\overline{\Omega}}}
\renewcommand{\l}{\left(}
\renewcommand{\r}{\right)}

\title{  Standard Planar Double Bubbles are Stable \\
under Surface Diffusion Flow}

\author{Helmut Abels, 
  Nasrin Arab\thanks{Corresponding author.},
 Harald Garcke}
 \affil{Fakultät für Mathematik, Universität Regensburg,
\\       93040 Regensburg, Germany}   
\date{\today}

\maketitle

\begin{abstract}
Although standard planar double bubbles are stable in the sense that the  second variation of the  perimeter functional is non-negative  for all area-preserving perturbations the question arises whether they are dynamically stable. By presenting connections between these two concepts of stability  for double bubbles, we prove that  standard planar double bubbles are stable under the surface diffusion flow via the generalized principle of linearized stability in parabolic Hölder spaces.   
\end{abstract}
\smallskip
\noindent \textit{Keywords:} standard planar double bubbles, surface diffusion flow,
stability, variationally stable, normally stable, gradient flows, triple junctions.
 
\smallskip
\noindent \textit{Mathematics Subject Classification}:
53C44, 35B35, 58E12, 53A10,  37L10

\section{Introduction}
\blfootnote{E-mail addresses:
\sf{helmut.abels@mathematik.uni-regensburg.de }(H. Abels), \\
\sf{ } \sf{ nasrin.arab@mathematik.uni-regensburg.de}, \sf{nasrin.arab@gmail.com} (N. Arab),\\
\sf{harald.garcke@mathematik.uni-regensburg.de} (H. Garcke).}The standard double bubble is stable in the sense that the second variation of the area  functional is non-negative. This follows for example from the fact that it is a local minimum of the area functional under volume constraints. It is however an open problem whether
the standard double bubble is stable for volume conserving geometric flows such as the surface diffusion flow.

The  related  problem for one bubble has been studied by Escher, Mayer and Simonett, see  \cite{Escher-Mayer-Simonett,Escher-Simonett}, who showed  that spheres are stable under the surface diffusion flow and  the volume preserving mean curvature flow. 
In this paper we show that  the  standard double bubble in $\mR^2$ is stable under the surface diffusion flow. In case of equal areas the result is illustrated in Figure \ref{Fig: stability}. 

 Before moving on to   define the problem  more precisely, let us make one point clear: Consider a  (cost) functional having  local minimizers.  Even though minimizers exist it is not clear that an associated gradient flow will converge to these minimizers, see \cite{AbsilKurdyka} for ODE examples. In other words, if a stationary state of the associated gradient flow is a local minimum, this
 in general does not imply stability of  this equilibrium under the flow. 

As just mentioned, the surface diffusion flow is the volume preserving gradient flow of the area functional. Indeed, it is the fastest way to decrease area  while preserving the volume w.r.t. the $H^{-1}$-inner product; see e.g. \cite{Mayer-SD,Taylor-Cahn,HG}. Let us now  define the flow precisely. A surface is evolving in time under  \textit{the surface diffusion flow} if its normal velocity  is equal to the negative surface Laplacian of its mean curvature at each point, that is, if a surface  $\Gamma(t) $  satisfies    
\begin{equation} \label{Eq: Intro surface}
V(t) = -\Delta_{\Gamma(t)} H_{\Gamma(t)} \,.
\end{equation}
Here $V$ stands for the normal velocity, $H$ is the mean curvature, and $\Delta $ is the Laplace-Beltrami operator, of the surface $\Gamma(t)$. 
Surfaces with constant mean curvature are  stationary solutions of the flow \eqref{Eq: Intro surface}. 
This flow leads to a fourth order parabolic partial differential equation (PDE). Thereby one may try to use  PDE theories to answer the question on the stability of  stationary solutions. 
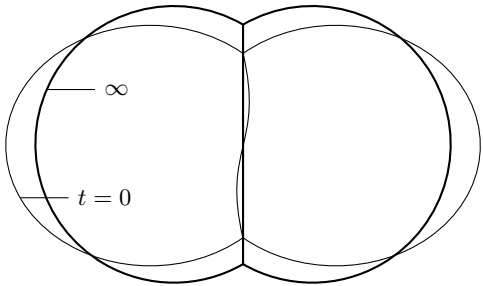
\begin{figure}[htbp]
        \centering
\begin{tikzpicture}[scale= 1.6,>=stealth]
\draw (0,0) arc (50:310:1.2cm and 1cm);

\draw (0,0) arc (50:310:-1.2cm and 1cm);

\draw (0,0) .. controls (0.18,-0.77) and (-0.18,-0.77) .. (0,-1.54);

\draw[ thick] (0 , 0.24 ) arc (60:300:1.15) ;
\draw[ thick] (0,0.24 ) arc (120:-120:1.15);
\draw[thick] (0,0.24)  -- (0, -1.76) ;
\draw (-1.85,-1.2) -- (-1.45,-1.2)node[ right ]{\small ${ t = 0}$};
\draw (-1.63,-0.3) -- (-1.23,-0.3)node[ right ]{\small ${ \infty}$};
\end{tikzpicture}
 \caption{An illustration  of  the stability of standard planar double bubbles, possibly up to isometries. (cf. the cover page to G. Prokert's PhD thesis  \cite{Prokert})} \label{Fig: stability}
\end{figure} 

Recently Prüss, Simonett and Zacher \cite{Pruss20093902,pruss2009612} introduced a    practical tool to show  stability for evolution equations in infinite dimensional Banach spaces in cases where the  linearization has a non-trivial kernel.  It is called \textit{the generalized principle of linearized stability}. This principle is extended in \cite{AbelsArabGarcke} to cover a more general setting. 
According to this principle, to prove stability, one   needs to verify  four assumptions known as the conditions of \textit{normal stability}:
\begin{enumerate}[\upshape (i)]
\item
the  set of stationary solutions  creates locally a  smooth manifold of  finite dimension, 
\item
the tangent space of the manifold of stationary solutions is given by the null space of the linearized operator,
\item
the eigenvalue $0$ of  the linearized operator  is  semi-simple,
\item
apart from zero, the spectrum of the linearized operator lies in $\mathbb{C}_+$.
\end{enumerate}     
We will see that the non-negativity of the second variation of the area functional  plays an important role in verifying most of these assumptions for the double bubble problem.

Let us note that the center manifold theory  is used in \cite{Escher-Mayer-Simonett,Escher-Simonett}  to prove the stability of spheres under the surface diffusion flow and the volume preserving mean curvature flow.
 We remark that sofar no center manifold theory exists in the case of non-homogenous boundary conditions. Due to the  triple junctions, we indeed get  nonlinear boundary conditions in the corresponding PDE.
 
\textit{Outline.}  In Section  \ref{Sec: the geometric setting} we precisely define     the problem  which we summarize here: Let $\Gamma^0$ be an initial planar double bubble. We suppose that    $\Gamma^0$ moves according to the surface diffusion flow including certain boundary conditions on the triple junctions. We continue then by observing that the set of stationary solutions  consists precisely  of all   standard planar double bubbles.  

Next we transfer, via suitable parameterization, this geometric problem to   a system of fully nonlinear and nonlocal partial differential equations with nonlinear boundary conditions defined on fixed domains. We then linearize this nonlinear system. This is   done in Section \ref{Sec: PDE and Linear}.    

In Section \ref{Sec: general setting} we rewrite this nonlinear system  as a perturbation of the linearized  problem. We  then see  how suitably the problem fits to the generalized principle of linearized stability setting  which is summarized in Section \ref{setting}. 

It then remains to check the conditions of normal stability. Let us note here that understanding the geometric interpretations of the problem was of great help.
 Lemma \ref{Lem: spectrum negative} proves  assertion (iv). The non-negativity of the second variation is the main ingredient  in the  proof.  Semi-simplicity is also proved by the non-negativity of the second variation in Section \ref{Sec: semi-simplicity2}. 
 We prove assertion (i) in Section \ref{Sec: D manifold}  and Corollary \ref{cor: null and manifold} proves assertion (ii). 

By applying the generalized principle of linearized stability we then complete the proof of the stability, as summarized in Section \ref{Sec: main theo}. 
We continue  then in Section \ref{Sec: general flow}  to discuss   general  area preserving geometric flows. We then  conjecture that the standard planar double bubbles are stable under sufficiently smooth  area preserving gradient flows, see Conjecture \ref{Conj}.     

In addition, Appendix \ref{Sec: more about bilinear}   shows that the second variation is  negative for two elements of the basis of the  null space which correspond to non-area preserving perturbations.

\textit{\textbf{Acknowledgments. }}Part of this work was carried out while Arab was visiting Freie Universität Berlin the geometric analysis group    during the summer and  the winter semester in  2014. Arab was supported by Bayerisches Programm zur Realisierung der Chancengleichheit für Frauen in Forschung und Lehre und nationaler MINT-Pakt as well as  DFG, GRK 1692 "Curvature, Cycles, and Cohomology". The support is gratefully acknowledged.

\section{The geometric setting} \label{Sec: the geometric setting}
 A planar double bubble $\Gamma \subset \mR^2$    consists of three   curves $\Gamma_1, \Gamma_2, \Gamma_3$ meeting  two common points $p_+, p_-$ (triple junctions) at their boundaries such that $\Gamma_1$ and $\Gamma_2$ (resp. $\Gamma_2$ and $\Gamma_3$) enclose the connected region $R_1$ (resp. $R_2$). Hence the curve $\Gamma_2$ is the curve separating $R_1$ and $R_2$, see Figure \ref{Fig: planar double bubble}.

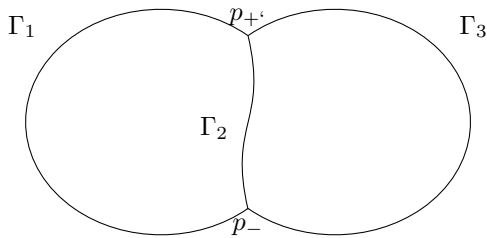
\begin{figure}[htbp]
        \centering
\begin{tikzpicture}[scale= 1.5,>=stealth]
\draw (0,0) arc (50:310:1.2cm and 1cm);

\draw (0,0) arc (50:310:-1.2cm and 1cm);

\draw (0,0) .. controls (0.18,-0.77) and (-0.18,-0.77) .. (0,-1.54);
\coordinate [label=above:{$p_{+`}$}] (A) at (0,0);
\coordinate [label=below:{$p_{-}$}] (B) at (0,-1.54);

\coordinate [label=above:{$\Gamma_1$}] (A) at (-2,-0.1);
\coordinate [label=above:{$\Gamma_3$}] (A) at (2,-0.1);
\coordinate [label=above:{$\Gamma_2$}] (A) at (-0.3,-1);
\end{tikzpicture}
 \caption{A good example of a planar double bubble $\Gamma = \{ \Gamma_1, \Gamma_2, \Gamma_3 \}$}
 \label{Fig: planar double bubble}
\end{figure}
We study the following problem introduced by Garcke and Novick-Cohen \cite{GarckeNovick}: Find evolving  planar double bubbles $\Gamma(t) = \{ \Gamma_1(t), \Gamma_2(t), \Gamma_3(t) \}$   with the following properties:
\begin{equation} \label{Eq: 2-double bubble}
\left.
   \begin{aligned}
       V_i                                            &= -\Delta_{\Gamma_i} \kappa_i \quad   & &\text{on } \Gamma_i (t) \,, \\
        \sphericalangle ( \Gamma_1 (t) , \Gamma_2 (t)) &=\sphericalangle ( \Gamma_2 (t) , \Gamma_3 (t)) = \sphericalangle ( \Gamma_3 (t) , \Gamma_1 (t)) = \tfrac{ 2 \pi}{3}        & &\text{on } \Sigma(t) \,,   \\
       \kappa_{1} + \kappa_2 + \kappa_3                              &= 0          & &\text{on } \Sigma(t) \,, \\
       \nabla_{\Gamma_1} \kappa_1 \cdot n_{\partial \Gamma_1} &=\nabla_{\Gamma_2}       \kappa_2 \cdot n_{\partial \Gamma_2}                                                     =\nabla_{\Gamma_3}       \kappa_3 \cdot n_{\partial \Gamma_3}  & &\text{on } \Sigma (t) \,,\\
      \Gamma_i(t)|_{t=0}                                &=\Gamma_i^0 \,,     \end{aligned}
\right \}   
\end{equation}
where $i = 1, 2, 3$, $\Gamma_i(t) \subset \mR^2 $, and
$$
\partial \Gamma_1 (t) = \partial \Gamma_2 (t) = \partial \Gamma_3 (t)\, \big( = \{p_+(t), p_-(t) \} =: \Sigma(t)\big) \,.
$$ 
Here $V_i$ is the normal velocity, $\kappa_i$  is the curvature, and $\Delta_{\Gamma_i}$ is the Laplace-Beltrami operator of the curve $\Gamma_i$ $(i=1,2,3)$. Also $\nabla_{\Gamma_i}$ denotes the surface gradient and $n_{\partial \Gamma_i}$ denotes the outer unit conormal of $\Gamma_i$  at $\partial \Gamma_i $ $( i = 1,2,3)$.

Moreover $\Gamma^      0 = \{ \Gamma^0_1, \Gamma^0_2, \Gamma^0_3 \}$ is a given initial planar double bubble, which fulfills the 
 angle $\eqref{Eq: 2-double bubble}_2$, the curvature  $\eqref{Eq: 2-double bubble}_3$ and the balance of flux condition $\eqref{Eq: 2-double bubble}_4$ as above and    satisfies the compatibility condition  
\begin{equation} \label{Con: sum laplace curvature}
\Delta_{\Gamma_1^0} \kappa_1^0 + \Delta_{\Gamma_2^0} \kappa_2^0 + \Delta_{\Gamma_3^0} \kappa_3^0 = 0 \qquad \text{ on } \Sigma(0) \,.
\end{equation}
Furthermore, the choice of unit normals $n_i(t)$ of $\Gamma_i(t)$ is illustrated in Figure \ref{Fig.normals}, which in particular determines the sign of curvatures $\kappa_1$, $\kappa_2$ and $\kappa_3$.
We say  that the curve has positive curvature if it is curved in the direction of the normal. 
\begin{figure}[htbp]
           \centering
        \begin{tikzpicture}[scale=0.4,>=stealth]
                \draw (2.828,2.828) arc (45:315:4) node[above=2.8cm,right=-2.5cm]{$ \Gamma_1$};
                \draw (2.828,2.828) arc (105:-105:2.928)node[above=2cm,right=1.3cm]{$ \Gamma_3$};
                \draw (2.828,2.828) arc (165:195:10.92) node[above=1cm,right=-0.7cm]{$ \Gamma_2$};
             
                 \draw [->] (2.828,2.828) -- (3.828,3.828) node[above]{$n_1$};
                
                 \draw [->] (2.828,2.828) --(3.25,1.4) node[right]{$n_3$};
          
                    \draw[->] (2.828,2.828) -- (1.4,3.2) node[left]{$n_2$};
                        \end{tikzpicture}
                        \caption{The choice of the  normals}
                        \label{Fig.normals}
\end{figure}
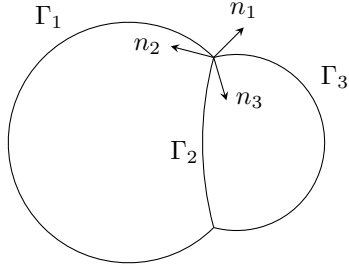

Let us   give a motivation for assuming the condition  \eqref{Con: sum laplace curvature}  on initial planar double bubble. 
\begin{lem}
For  a classical solution of the surface diffusion flow \eqref{Eq: 2-double bubble}  we have  
\begin{equation} \label{Eq: sum laplace curvature}
\sum_{i = 1}^3 \Delta_{\Gamma_i} \kappa_i = 0 \qquad \text{ on } \Sigma(t) \,.
\end{equation}
\end{lem}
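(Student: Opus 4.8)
The plan is to use the flow equation $\eqref{Eq: 2-double bubble}_1$ to turn the second-order quantity $\sum_{i=1}^3 \Delta_{\Gamma_i}\kappa_i$ into a purely kinematic statement about the normal velocities at the junction, and then to exploit the $120^\circ$ angle condition $\eqref{Eq: 2-double bubble}_2$. Since for a classical solution $V_i = -\Delta_{\Gamma_i}\kappa_i$ holds on $\Gamma_i(t)$ and extends by continuity up to the boundary $\Sigma(t)$, we have $\sum_i \Delta_{\Gamma_i}\kappa_i = -\sum_i V_i$ on $\Sigma(t)$, so it suffices to prove that $\sum_{i=1}^3 V_i = 0$ on $\Sigma(t)$.

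Next I would argue at a single triple junction point, say $p_+(t)$; the same reasoning applies verbatim at $p_-(t)$. The three curves $\Gamma_1,\Gamma_2,\Gamma_3$ share this endpoint, so it moves with one well-defined velocity $\dot p_+(t)\in\mR^2$. Parameterizing each $\Gamma_i(t)$ so that the junction corresponds to a fixed parameter value, the material velocity of that endpoint is $\dot p_+$ for every $i$, and its normal component is exactly the boundary value of the normal velocity; that is, $V_i = \dot p_+ \cdot n_i$ at $p_+(t)$. The tangential part of the endpoint velocity is irrelevant, since the normal velocity is independent of the parameterization. Summing gives $\sum_i V_i = \dot p_+ \cdot (n_1+n_2+n_3)$ at $p_+(t)$.

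Finally, the angle condition $\eqref{Eq: 2-double bubble}_2$ says the three curves meet pairwise at angle $2\pi/3$, so with the orientation of the normals fixed in Figure \ref{Fig.normals} the unit normals $n_1,n_2,n_3$ are pairwise at angle $2\pi/3$ as well. Three planar unit vectors with pairwise angles $2\pi/3$ satisfy $\abs{n_1+n_2+n_3}^2 = 3 + 2\sum_{i<j} n_i\cdot n_j = 3 + 6\cos(2\pi/3) = 0$, hence $n_1+n_2+n_3 = 0$. Therefore $\sum_i V_i = \dot p_+\cdot 0 = 0$ at $p_+(t)$, and likewise at $p_-(t)$, which yields $\sum_i\Delta_{\Gamma_i}\kappa_i = 0$ on $\Sigma(t)$.

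The step requiring the most care is the kinematic one: rigorously identifying the boundary value $\lim_{x\to p_+}V_i(x)$ of the normal velocity with the normal component $\dot p_+\cdot n_i$ of the junction's velocity, and pinning down the orientation conventions so that it is genuinely the normals $n_i$ (and not merely the tangents) that sum to zero. Everything else is either immediate from $\eqref{Eq: 2-double bubble}_1$ or the elementary identity for three unit vectors at $120^\circ$. It is worth noting that the balance-of-flux condition $\eqref{Eq: 2-double bubble}_4$ plays no role here; rather, the identity $\sum_i\Delta_{\Gamma_i}\kappa_i = 0$ is the natural second-order counterpart at the junction of the curvature balance $\kappa_1+\kappa_2+\kappa_3=0$ in $\eqref{Eq: 2-double bubble}_3$, which is precisely why imposing \eqref{Con: sum laplace curvature} on the initial datum is a compatibility condition and not an extra restriction.
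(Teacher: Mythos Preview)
Your proof is correct and follows essentially the same argument as the paper: write $V_i = \dot p_\pm \cdot n_i$ at the junction, use the $120^\circ$ angle condition to get $n_1+n_2+n_3=0$, and conclude $\sum_i V_i = 0$, hence $\sum_i \Delta_{\Gamma_i}\kappa_i = 0$. The only differences are cosmetic: you spell out the identity $|n_1+n_2+n_3|^2 = 0$ and add some commentary on orientation and compatibility, whereas the paper simply asserts $\sum_i n_i = 0$ from the angle condition.
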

\begin{proof}
At the triple junctions $p_\pm(t)$ we can write  for the normal velocities 
$$
V_i = \big\langle\frac{d}{d \tau} p_\pm ( \tau) \Big|_{\tau = t} , n_i(t) \big \rangle \,. 
$$
  Now the angle condition  implies 
$$
\sum_{i = 1}^3 V_i = \sum_{i = 1}^3 \big\langle\frac{d}{d \tau} p_\pm ( \tau) \Big|_{\tau = t} , n_i(t) \big \rangle = \big\langle\frac{d}{d \tau} p_\pm ( \tau) \Big|_{\tau = t} ,  \sum_{i = 1}^3 n_i(t) \big \rangle = 0 \,.
$$
As  $V_i = \Delta_{\Gamma_i} \kappa_i$, we obtain  \eqref{Eq: sum laplace curvature}.
\end{proof}
Therefore if one seeks for a classical solution which  is continuous  up to the time $t = 0, $ one should impose  \eqref{Eq: sum laplace curvature} on the initial data. 

After introducing the problem,   let us see    its interesting      geometric\ properties:
\begin{lem}
A classical solution to the surface diffusion flow \eqref{Eq: 2-double bubble} decreases the total length and preserves the enclosed areas.
\end{lem}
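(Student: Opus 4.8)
The plan is to compute the time derivatives of total length and of the enclosed areas, and show the first is nonpositive while the second vanishes. Both computations rest on the transport/variation formulas for evolving curves together with the specific boundary conditions in \eqref{Eq: 2-double bubble}.

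First, for the \emph{area preservation}, I would fix one region, say $R_1$, which is bounded by $\Gamma_1$ and $\Gamma_2$. The rate of change of an enclosed area under a normal velocity $V_i$ is the boundary integral of the normal velocity (with signs dictated by the chosen orientations of the normals in Figure \ref{Fig.normals}), so
\begin{equation}
\frac{d}{dt}\,\mathrm{area}(R_1) = \int_{\Gamma_1} V_1 \, ds \pm \int_{\Gamma_2} V_2 \, ds \,.
\end{equation}
Substituting $V_i = -\Delta_{\Gamma_i}\kappa_i$ and integrating the surface Laplacian over each curve reduces each term to the boundary contributions $\nabla_{\Gamma_i}\kappa_i \cdot n_{\partial\Gamma_i}$ evaluated at the two triple junctions $p_\pm$. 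The balance-of-flux condition $\eqref{Eq: 2-double bubble}_4$ then forces these endpoint fluxes to match up and cancel, giving $\frac{d}{dt}\,\mathrm{area}(R_1)=0$, and symmetrically for $R_2$.

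Second, for the \emph{length decrease}, I would use the first variation of length along a normal velocity field, namely $\frac{d}{dt}\,L(\Gamma_i) = -\int_{\Gamma_i}\kappa_i V_i\, ds + [\text{boundary terms}]$, where the boundary terms involve the velocity of the endpoints paired against the conormals $n_{\partial\Gamma_i}$. Summing over $i=1,2,3$, the three conormal contributions at each triple junction combine; here the $\tfrac{2\pi}{3}$ angle condition $\eqref{Eq: 2-double bubble}_2$ (which makes the three conormals sum to zero, exactly as the three normals did in the preceding lemma) kills the boundary terms. What remains is
\begin{equation}
\frac{d}{dt}\sum_{i=1}^3 L(\Gamma_i) = -\sum_{i=1}^3 \int_{\Gamma_i} \kappa_i V_i \, ds = \sum_{i=1}^3 \int_{\Gamma_i} \kappa_i\, \Delta_{\Gamma_i}\kappa_i \, ds \,.
\end{equation}
Integrating by parts once more on each curve and again collecting the endpoint terms via the flux condition $\eqref{Eq: 2-double bubble}_4$ together with the curvature condition $\eqref{Eq: 2-double bubble}_3$ (so that $\sum_i \kappa_i = 0$ at the junctions lets the mixed boundary products telescope), I expect all boundary terms to vanish, leaving
\begin{equation}
\frac{d}{dt}\sum_{i=1}^3 L(\Gamma_i) = -\sum_{i=1}^3 \int_{\Gamma_i} \abs{\nabla_{\Gamma_i}\kappa_i}^2 \, ds \le 0 \,.
\end{equation}

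The main obstacle is the careful bookkeeping of the boundary terms at the two triple junctions: each integration by parts produces endpoint contributions, and one must check that the three geometric conditions (angle, curvature sum, and flux balance) conspire precisely so that every boundary term cancels. In particular, the signs coming from the orientation of the conormals $n_{\partial\Gamma_i}$ at $p_+$ versus $p_-$ must be tracked consistently with the normal choices fixed in Figure \ref{Fig.normals}; getting these signs right is the delicate part, whereas the interior computations are the standard surface-diffusion energy identity.
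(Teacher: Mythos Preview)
Your proposal is correct and follows essentially the same route as the paper's proof: the transport formula for length combined with the angle condition to kill the conormal boundary terms, then an integration by parts using the flux and curvature-sum conditions to arrive at $-\sum_i\int_{\Gamma_i}|\nabla_{\Gamma_i}\kappa_i|^2\,ds\le 0$; and for area preservation, reduction of $\int_{\Gamma_i}V_i\,ds$ to endpoint fluxes which cancel by $\eqref{Eq: 2-double bubble}_4$. The only differences are cosmetic (you treat area first, the paper treats length first), and the paper confirms that the sign in the area formula is a minus, i.e.\ $\frac{d}{dt}\mathrm{area}(R_1)=\int_{\Gamma_1}V_1\,ds-\int_{\Gamma_2}V_2\,ds$.
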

\begin{proof}
Assume $\Gamma(t) $ is a solution to the flow \eqref{Eq: 2-double bubble} and let 
$$  
l(t) = \sum_{i=1}^3 \int_{\Gamma_i(t)}1 \,\mathrm{d} s 
$$
denote the total length.
A transport theorem (see e.g. \cite[Theorem 2.44]{Depner}) gives: \vspace{-0.2cm}
\begin{align} \label{Formula: length}
\frac{d}{dt} l(t) &= -\sum_{i = 1}^3 \ \int_{\Gamma_i(t)} V_i \, \kappa_i \,\, \mathrm{d}s +  \int_{\Sigma(t)} \overbrace{\sum_{i=1}^3 \nu_{\partial \Gamma_i}}^{=0} \nonumber = \sum_{i =1}^3 \int_{ \Gamma_i(t)} (\Delta_{\Gamma_i(t)} \kappa_i ) \kappa_i \, \mathrm{d} s \nonumber \\
&=- \sum_{i =1}^3 \int_{\Gamma_i(t)} |\nabla_{\Gamma_i(t)} \kappa_i |^2  \mathrm{d} s + \int_{\Sigma(t)} \sum_{i =1}^3 (\nabla_{\Gamma_i(t)} \kappa_i \cdot n_{\partial_{\Gamma_i}}) \kappa_i \nonumber \\
&=  - \sum_{i =1}^3 \int_{\Gamma_i(t)} |\nabla_{\Gamma_i(t)} \kappa_i |^2  \mathrm{d} s +  \int_{\Sigma(t)}  (\nabla_{\Gamma_1(t)} \kappa_1 \cdot n_{\partial_{\Gamma_1}}) \underbrace{ \sum_{i =1}^3 \kappa_i }_{=0} \nonumber\\[-0.52cm]
&=- \sum_{i =1}^3 \int_{\Gamma_i(t)} |\nabla_{\Gamma_i(t)} \kappa_i |^2  \mathrm{d} s \leq 0 \,, 
\end{align}
where we used all the boundary conditions. Note that the  sum of the normal boundary velocities  $\nu_{\partial \Gamma_i}$ vanishes due to the angle condition, more precisely,
$$
\sum_{i=1}^3 \nu_{\partial \Gamma_i}(t, p_{\pm}(t)) =\Big( \frac{d}{d \tau} p_{\pm}(\tau) \Big|_{\tau = t} \Big)\sum_{i =1}^3 n_{\partial \Gamma_i}(t, p_{\pm}(t)) = 0 \,. 
$$
Moreover, the integral over $\Sigma(t) = \{p_+(t), p_-(t) \}$ should be understood as a sum over its elements. 

Next, let us prove that the enclosed areas are preserved:
It is a standard fact that (see e.g.  \cite[equation (3.1)]{HutchingsMorganRitorAntonio})\begin{align*}
\frac{d}{d t} \int_{R_1(t)} 1\, \mathrm{d}x &=  \int_{\Gamma_1 (t)} V_1 \, \mathrm{d}s - \int_{\Gamma_2(t)} V_2 \,\mathrm{d}s\\
&=- \int_{\Gamma_1(t)} \Delta_{\Gamma_1(t)} \kappa_1  \, \mathrm{d} s + \int_{\Gamma_2(t)} \Delta_{\Gamma_2(t)} \kappa_2 \, \mathrm{d} s               \\
&=- \int_{\Sigma (t)} \nabla_{\Gamma_1(t)} \kappa_1 \cdot n_{\partial_{\Gamma_1(t)}}
+ \int_{\Sigma (t)} \nabla_{\Gamma_2(t)} \kappa_2 \cdot n_{\partial_{\Gamma_2(t)}} =0\,.
\end{align*}
 Similarly, we  get  $\frac{d}{d t} \int_{R_2(t)} 1\, \mathrm{d}x = 0$, which completes the proof.
\end{proof}
Let us mention that, via formally matched  asymptotic expansions, the flow \eqref{Eq: 2-double bubble} is derived as an singular limit of   a system of degenerate   Cahn-Hilliard equations in \cite{GarckeNovick}, where in particular the boundary conditions at each triple junction are derived. 
\subsection{ Equilibria} \label{Sec: Equilibria}
 Let a planar double bubble $\Gamma = \{ \Gamma_1, \Gamma_2, \Gamma_3\}$ be a stationary solution of the flow  \eqref{Eq: 2-double bubble}, i.e., $\Gamma$ satisfies  \eqref{Eq: 2-double bubble} with $V_i = 0$ for $i = 1,2,3$. As a consequence 
$$
\Delta_{\Gamma_i} \kappa_i = 0 \quad (i=1,2,3)\,.
$$   
By the same arguments used in \eqref{Formula: length} we get
$$
0 = \sum_{i =1}^3 \int_{ \Gamma_i}(\Delta_{\Gamma_i} \kappa_i )\kappa_i \, \mathrm{d} s =- \sum_{i =1}^3 \int_{\Gamma_i} |\nabla_{\Gamma_i} \kappa_i |^2  \mathrm{d} s \,.  
$$ 
Thus $ \nabla_{\Gamma_i} \kappa_i = 0$ on $ \Gamma_i $. Therefore $\kappa_1, \kappa_2, \kappa_3$ are constant.
Summing up,  a planar double bubble $\Gamma$ is a stationary solution  of the  flow  \eqref{Eq: 2-double bubble}   if and only if   
\begin{description}
\item[(i)] 
the  curvatures $\kappa_i$ are constant, with $\kappa_1 + \kappa_2 + \kappa_3 = 0$, and 
\item[(ii)]
$ \sphericalangle ( \Gamma_i  , \Gamma_j ) = \tfrac{ 2 \pi}{3}$ on $\Sigma$ or equivalently  $\sum_{i=1}^3 n_{\partial \Gamma_i} = 0 $ on $\Sigma$. 
\end{description}
It will turn out that  the set of stationary solutions  consists precisely  of all   standard planar double bubbles:
\begin{df}
A standard planar double bubble   consists of three circular arcs meeting at their  boundaries at  $120$ degree angles. (Here, we interpret  a line segment as a circular arc too.)
\end{df}
We refer to Figure \ref{Fig: SPDB} for an example.
\begin{figure}[htbp]
           \centering
        \begin{tikzpicture}[scale=0.35,>=stealth]
                \draw (2.828,2.828) arc (45:315:4);
                \draw (2.828,2.828) arc (105:-105:2.928);
                \draw (2.828,2.828) arc (165:195:10.92) ;
     \end{tikzpicture}
     \caption{The standard planar double bubble}
     \label{Fig: SPDB}
\end{figure}
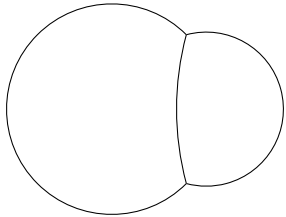
Indeed, as  circular arcs and line segments are the only curves with constant curvature, it  just remains to verify the condition on curvatures. This is done in the following proposition given in \cite[Proposition 2.1]{HutchingsMorganRitorAntonio}:\begin{pr}\label{Prop: existence SDB}
There is a unique standard planar double bubble (up to rigid motions, i.e., translations and rotations) for given areas in $\mR^{2 }$. The curvatures satisfy $\kappa_1 + \kappa_2 + \kappa_3 = 0$. 
\end{pr}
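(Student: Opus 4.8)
The plan is to convert the definition of a standard planar double bubble into a small system of equations in two real parameters, solve it for prescribed areas, and read off the curvature relation almost for free from the angle condition. The first reduction is to exploit that each of the three arcs lies on a circle through \emph{both} triple junctions $p_+$ and $p_-$. Since a circle through two points has its center on the perpendicular bisector of those points, all three circles are coaxial, with centers on the perpendicular bisector $m$ of $\overline{p_+p_-}$. Reflection across $m$ therefore maps each circle, and each of the three arcs, to itself, so every standard planar double bubble is symmetric with respect to $m$. In particular this reflection carries the $120^\circ$ condition at $p_+$ to the one at $p_-$, so it suffices to impose the angle condition at a single junction.

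Next I would normalize by a rigid motion so that $p_\pm = (0,\pm d)$ with $2d = \abs{p_+ - p_-}$, placing $m$ on the horizontal axis; each admissible circle is then pinned down by one angular parameter. Let $\theta_i$ be the oriented angle between the chord direction $\overline{p_+p_-}$ and the unit tangent $\tau_i$ of $\Gamma_i$ at $p_+$ pointing into the arc. The tangent--chord relation gives $R_i = d/\abs{\sin\theta_i}$ for the radius of the $i$-th circle, so the signed curvature satisfies $\kappa_i = \sin\theta_i/d$ once the sign convention of Figure \ref{Fig.normals} is matched in each of the three cases (a routine sign check). The $120^\circ$ condition says exactly that the three inward tangents, equivalently the conormals $n_{\partial\Gamma_i}$, are mutually separated by $120^\circ$, i.e. that $\sum_i n_{\partial\Gamma_i}=0$. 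Writing $\tau_i=(\sin\theta_i,-\cos\theta_i)$ and decomposing this vanishing sum into its components perpendicular and parallel to $\overline{p_+p_-}$ yields $\sum_i\sin\theta_i=0$ and $\sum_i\cos\theta_i=0$; the former is precisely $\kappa_1+\kappa_2+\kappa_3=0$, which settles the curvature assertion.

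It then remains to prove existence and uniqueness for prescribed areas. After the normalization, a standard double bubble is described by the scale $d>0$ together with a single shape parameter $\theta:=\theta_2$ ranging over an admissible open interval, since the angle condition forces $\theta_1=\theta+120^\circ$ and $\theta_3=\theta-120^\circ$. Using the elementary formula for the area of a circular segment, I would write the enclosed areas $A_1,A_2$ as explicit functions of $(d,\theta)$. Because both areas scale like $d^2$, the shape parameter $\theta$ alone controls the ratio $A_1/A_2$, while $d$ fixes the overall size. The key step is to show that, as $\theta$ ranges over the admissible interval (whose endpoints correspond to the degenerate limits in which an arc straightens or an enclosed region collapses), the ratio $A_1/A_2$ is strictly monotone and sweeps out all of $(0,\infty)$. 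Granting this, the ratio determines $\theta$ uniquely, and then $d$ is fixed by, say, $A_1+A_2$; this produces a configuration that is unique up to the normalizing rigid motion, as claimed.

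I expect the monotonicity of $\theta\mapsto A_1/A_2$ to be the main obstacle. The segment areas are transcendental in $\theta$, the admissible range must be pinned down carefully (the symmetric lens $A_1=A_2$ sits in its interior, and one must control the boundary cases where a region shrinks to zero or an arc becomes a straight segment), and one must verify throughout that the three arcs genuinely bound two disjoint regions of the correct combinatorial type, without extraneous intersections. By contrast, the coaxial/symmetry reduction, the tangent--chord computation, and the derivation of $\kappa_1+\kappa_2+\kappa_3=0$ are short and essentially algebraic.
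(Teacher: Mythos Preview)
The paper does not prove this proposition at all: it is quoted verbatim as \cite[Proposition 2.1]{HutchingsMorganRitorAntonio} and treated as a black box. So there is no ``paper's own proof'' to compare against; any self-contained argument you supply goes beyond what the authors do here.

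Your derivation of the curvature identity is clean and correct. The observation that all three circles pass through $p_\pm$ forces the centers onto the perpendicular bisector, the tangent--chord relation gives $\kappa_i=\sin\theta_i/d$, and the $120^\circ$ condition $\sum_i n_{\partial\Gamma_i}=0$ immediately yields $\sum_i\sin\theta_i=0$, hence $\kappa_1+\kappa_2+\kappa_3=0$. This part is complete.

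For existence and uniqueness your reduction to a single shape parameter $\theta$ plus a scale $d$ is the right move, and you are honest that the monotonicity of $\theta\mapsto A_1(\theta)/A_2(\theta)$ is the missing piece. That is indeed where the work lies: the segment-area formulas are transcendental in $\theta$, and one has to either compute the derivative and control its sign over the whole admissible interval, or argue more softly (e.g.\ show both $A_1$ and $A_2$ are monotone in $\theta$ with opposite signs of derivative, which is geometrically plausible since increasing $\theta$ bows $\Gamma_2$ toward one region and away from the other while simultaneously changing $\Gamma_1,\Gamma_3$ in a controlled way). As written, your proposal is a correct and well-organized outline, with the monotonicity step explicitly flagged as incomplete; to turn it into a proof you would need to carry out that calculus check, which is routine but not entirely trivial.
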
 
\begin{rem}
As the choice of  the normals in \cite{HutchingsMorganRitorAntonio} differs from ours,  some sign differences  particularly for the curvature quantities  can occur.
\end{rem} 
Therefore the set of all standard planar double bubbles $DB_{r, \gamma , \theta}(a_1,a_{2})$ forms a $5$-parameter family
(see Figure \ref{Fig: D general PDB}), where
\begin{enumerate}[\upshape (i)]
\item 
$r > 0$ is the radius of  $\Gamma_1$, corresponding to scaling, 
\item
$(a_{1}, a_{2})$ is the center of $\Gamma_1$, corresponding to  translation,
\item
 the angle  $\theta$ corresponds to counterclockwise rotation around the center of $\Gamma_1$,
\item
the angle $0 < \gamma < \tfrac{2 \pi}{3}$ corresponds to the curvature ratio.
\end{enumerate} 
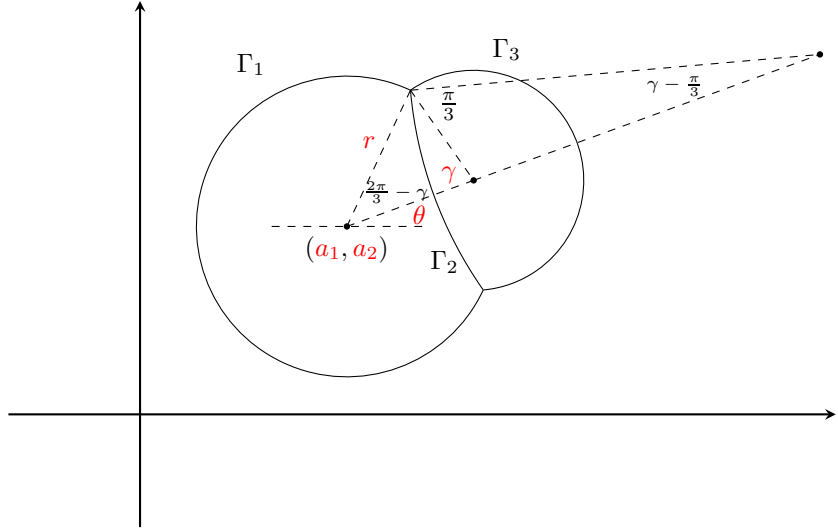
\begin{figure}[htbp]
     \centering
     \begin{tikzpicture}[scale=0.5,>=stealth]
                \draw (1.690 , 3.625 ) arc (65:335:4) node[above=3cm,right=-3.4cm]{$ \Gamma_1$};
                \draw (1.690 , 3.625 ) arc (125:-85:2.928) node[above=3.2cm,right= 0]{$ \Gamma_3$};
                \draw (1.690 , 3.625 ) arc (185:215:10.92) node[above= 0.37cm , left = 0.2cm]{$ \Gamma_2$};

                \filldraw [black] (0,0) circle (2pt) node [below]{$(\textcolor{red}{a_1},\textcolor{red}{a_2})$} node [above = 1.6 cm , right = 1.1cm ]{$\tfrac{ \pi} { 3 }$} node[above=13, right= 0]{
                                \begin{scriptsize}
                                        $\tfrac{2 \pi}{3}-\gamma  $ 
                                \end{scriptsize}
                           } ;
                \filldraw [black] (3.37,1.226) circle (2pt) node[above = 2, right =-0.55cm ]{$\textcolor{red}{\gamma}$};
                \filldraw [black] (12.577,4.577) circle (2pt) node [below = 12, left = 38]{
                        \begin{scriptsize}
                                $  \gamma - \tfrac{ \pi} { 3 } $
                        \end{scriptsize}
                };
                \draw[dashed] (0,0) -- (3.37,1.226);              
                \draw[dashed] (3.37,1.226) -- (12.577,4.577) ;   
                \draw[dashed]  (1.690 , 3.625 ) --(12.577,4.577);  
                \draw[dashed] (1.690 , 3.625 ) -- (0,0) node [above=32, right = 2.5]{$\textcolor{red}{r}$};           
                \draw[dashed] (1.690 , 3.625 ) -- (3.37,1.226) ;
                \draw [dashed](-2,0) -- (2,0) node[above=4.5, left= -5 ]{$\textcolor{red}{\theta} $};
                \draw [->,thick](-9,-5)--(13,-5);
                \draw [->,thick](-5.5,-8)--(-5.5,6);
    \end{tikzpicture}
                 \caption{The standard planar double bubble $\Gamma = DB_{r, \gamma , \theta}(a_1,a_{2})$}
                \label{Fig: D general PDB}
\end{figure}
Indeed, by the law of sines we have for $\gamma \neq \frac{ \pi}{3}$
\begin{equation} \label{Eq: D useful one}
\frac{\kappa_1}{\sin (\gamma + \frac{\pi}{3})} = \frac{\kappa_2}{\sin (\gamma - \frac{\pi}{3})} = \frac{\kappa_3}{\sin (\gamma - \pi )}
\end{equation}
and in case $\gamma = \tfrac{ \pi} { 3 }$ we observe
$ \kappa_2 = 0$ and $\kappa_1 = -\kappa_3$.
Note that due to  our choice of normals we always have $\kappa_1 < 0$ and $\kappa_3 > 0$.
Moreover,
$$ 
\begin{cases}
\kappa_2 > 0  &\text{ for } \gamma < \frac{\pi}{3}\,,\\
\kappa_2 < 0  &\text{ for } \gamma > \frac{\pi}{3}\,.
\end{cases} 
$$
For later use we define the constants $q_i$ as follows:
$$
q_i := -\frac{1}{\sqrt{3}} (\kappa_j - \kappa_k)
$$
for $(i,j,k) = (1,2,3), (2,3,1)$ and $(3,1,2)$.
Then the following result is true.
\begin{lem} 
We have$$
q_1 = \cot (\gamma + \tfrac{ \pi } {3})\kappa_1, \quad 
q_2 =
\begin{cases}
\cot (\gamma - \tfrac{ \pi } {3})\kappa_2 & \gamma \neq \frac{\pi}{3}\,,\\[10pt]
\dfrac{\kappa_1}{\sin(\frac{\pi}{3})}     & \gamma = \frac{\pi}{3} \,,
\end{cases}  \quad
q_3 = \cot (\gamma - \pi )\kappa_3 \,.
$$
\end{lem}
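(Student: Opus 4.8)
The plan is to reduce all three identities to elementary trigonometric manipulations, once the curvatures have been expressed through a single proportionality constant. By the law of sines \eqref{Eq: D useful one}, for $\gamma \neq \tfrac{\pi}{3}$ there is a constant $c \neq 0$ with
\[
\kappa_1 = c\,\sin(\gamma + \tfrac{\pi}{3}), \qquad \kappa_2 = c\,\sin(\gamma - \tfrac{\pi}{3}), \qquad \kappa_3 = c\,\sin(\gamma - \pi).
\]
This Ansatz is automatically consistent with the constraint $\kappa_1 + \kappa_2 + \kappa_3 = 0$, since $\sin(\gamma + \tfrac{\pi}{3}) + \sin(\gamma - \tfrac{\pi}{3}) = 2\cos\tfrac{\pi}{3}\,\sin\gamma = \sin\gamma = -\sin(\gamma - \pi)$, so no information is lost.

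With this substitution each claimed identity becomes a statement about sines and cosines alone. To prove $q_1 = \cot(\gamma + \tfrac{\pi}{3})\,\kappa_1$, for example, I observe that the right-hand side equals $c\,\cos(\gamma + \tfrac{\pi}{3})$, so by the definition $q_1 = -\tfrac{1}{\sqrt{3}}(\kappa_2 - \kappa_3)$ it suffices to check
\[
-\tfrac{1}{\sqrt{3}}\bigl(\sin(\gamma - \tfrac{\pi}{3}) - \sin(\gamma - \pi)\bigr) = \cos(\gamma + \tfrac{\pi}{3}).
\]
The sum-to-product formula $\sin A - \sin B = 2\cos\tfrac{A+B}{2}\sin\tfrac{A-B}{2}$ turns the bracket into $\sqrt{3}\,\cos(\gamma - \tfrac{2\pi}{3})$, and the phase shift $\gamma - \tfrac{2\pi}{3} = (\gamma + \tfrac{\pi}{3}) - \pi$ converts this to $-\sqrt{3}\,\cos(\gamma + \tfrac{\pi}{3})$, which gives the claim. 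The computations for $q_2$ and $q_3$ follow the same pattern: in each case $q_i = -\tfrac{1}{\sqrt{3}}(\kappa_j - \kappa_k)$ is rewritten by sum-to-product, the prefactor $\tfrac{1}{\sqrt{3}}$ is absorbed by a factor $\sqrt{3}$, and a shift by $\pi$ produces the required cotangent.

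The only case needing separate treatment is $\gamma = \tfrac{\pi}{3}$, where $\cot(\gamma - \tfrac{\pi}{3})$ is singular and the Ansatz degenerates because $\kappa_2 = 0$. Here I argue directly: from $\kappa_2 = 0$ and $\kappa_1 = -\kappa_3$, recorded just after \eqref{Eq: D useful one}, we get $\kappa_3 - \kappa_1 = -2\kappa_1$, hence
\[
q_2 = -\tfrac{1}{\sqrt{3}}(\kappa_3 - \kappa_1) = \tfrac{2}{\sqrt{3}}\,\kappa_1 = \frac{\kappa_1}{\sin(\pi/3)},
\]
which is exactly the stated value. The formulas for $q_1$ and $q_3$ stay valid at $\gamma = \tfrac{\pi}{3}$ either by continuity, since the relevant cotangents $\cot(\tfrac{2\pi}{3})$ and $\cot(-\tfrac{2\pi}{3})$ remain finite and the curvatures depend continuously on $\gamma$, or by direct substitution. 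I do not expect a real obstacle here: the whole lemma is a bookkeeping exercise in trigonometric identities, and the only points requiring care are the removable singularity at $\gamma = \tfrac{\pi}{3}$ and keeping the sign conventions for the curvatures consistent throughout.
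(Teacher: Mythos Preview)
Your proof is correct and follows essentially the same approach as the paper: both use the law-of-sines proportionality \eqref{Eq: D useful one} to reduce the identities to a sum-to-product computation, and both handle the case $\gamma=\tfrac{\pi}{3}$ directly. The paper writes the proof for $q_2$ (expressing $\kappa_1,\kappa_3$ in terms of $\kappa_2$) while you write it for $q_1$ (factoring out a common constant $c$), but the computations are interchangeable.
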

\begin{proof}
We calculate 
\begin{align*}
q_2 &= 
-\frac{1}{\sqrt{3}} (\kappa_3 - \kappa_1) =
-\frac{1}{\sqrt{3}} \Big( \frac{-\sin (\gamma ) -\sin (\gamma + \tfrac{ \pi}{3}) }{\sin(\gamma - \tfrac{ \pi }{3}) } \Big) \kappa_2  \\
&=\frac{2}{\sqrt{3}} \Big(  \frac{\sin ( \gamma +  \tfrac{ \pi} { 6 })  \cos (\tfrac{\pi}{6}) }{\sin(\gamma - \tfrac{ \pi }{3}) } \Big) \kappa_2= \cot( \gamma - \tfrac{ \pi}{3}) \kappa_2 \quad \text{for } \gamma \neq  \tfrac{\pi}{3}\,, \end{align*}
and obviously $q_2 = \frac{2}{\sqrt 3} \kappa_1 = \frac{\kappa_1}{\sin(\frac{\pi}{3})}  $ for $\gamma = \frac{\pi}{3}$. The continuity follows from   formula \eqref{Eq: D useful one}. The proof for $q_1$ and $q_3$ is  similar.
\end{proof}
Moreover, using the sum-to-product trigonometric  identity, we get  
\begin{equation} \label{cos and sin}
\left \{
\begin{aligned}
\sin ( \gamma + \frac{\pi}{3}) + 
\sin ( \gamma - \frac{\pi}{3}) +
\sin ( \gamma - \pi) &= 0 \,, \\
\cos ( \gamma + \frac{\pi}{3}) + 
\cos ( \gamma - \frac{\pi}{3}) +
\cos ( \gamma - \pi) &= 0 \,.
\end{aligned}
\right.
\end{equation}
One  strategy
to deal with geometric flows  on hypersurfaces is to parameterize the evolving hypersurfaces with
respect to a fixed reference hypersurface. This  eventually leads  to a PDE   on a fixed domain allowing us to employ  PDE theories. 

\section{ PDE formulation and linearization} \label{Sec: PDE and Linear}
In this section we introduce the proper setting  to reformulate the geometric flow \eqref{Eq: 2-double bubble} as a system of partial differential equations for  unknown  functions defined on  fixed domains. 
For this, we employ a parameterization with two parameters. The parameters correspond to a movement in  normal and tangential directions.  This parameterization  is adapted for two triple junctions from   Depner and Garcke \cite{DepnerGarckelinearized}, see also \cite{DepnerGarckeKohsaka}.
\subsection{Parameterization of planar double bubbles}

Let us describe $\Gamma_i(t)$ as  a graph over some fixed stationary solution  $\Gamma_i^* $  using  functions 
$$
\rho_i: \Gamma_i^*  \times [0,T)\rightarrow \mathbb R \qquad (i = 1,2,3)\,. 
$$
The precise way how $\rho_i$ defines $\Gamma_i(t)$ will be derived in what follows.

 Fix any stationary solution  
$$
\Gamma^* = DB_{r^*, \gamma^*, \theta^*}(a_1^*, a_2^*) \quad  \big(r^*>0,\, (a_1^*, a_2^*) \in \mR^2,\,  0 < \gamma^* < \tfrac{2 \pi}{3},\, 0 \leq \theta^* < 2 \pi \big) \,.
$$
Then  we observe  
\begin{align}
l_1^* &= -\tfrac{1}{\kappa_1^*} (  \gamma^* +\tfrac{\pi}{3} ),\nonumber \\
l_2^* &=
\left \{
\begin{aligned}
&-\tfrac{1}{  \kappa_2^* }(\gamma^* - \tfrac{\pi}{3}) = -\tfrac{1}{\kappa_1^*}\tfrac{( \gamma^{*} - \tfrac{\pi}{3})}{\sin (\gamma^{*} - \frac{ \pi}{3})}\sin (\gamma^* + \tfrac{\pi}{3})   & &\text{ if }\gamma^* \neq \tfrac{\pi}{3} \,,\\
&-\tfrac{1}{\kappa_1^*} \sin( \tfrac{\pi}{3}) & &\text{ if } \gamma^* = \tfrac{\pi}{3}\,,
\end{aligned}
\right.
  \nonumber\\
   l_3^* &=- \tfrac{1}{   \kappa_3^* }(\gamma^* - \pi) =  -\tfrac{1}{\kappa_1^*} \tfrac{(\gamma^* - \pi )}{\sin (\gamma ^*- \pi )} \sin (\gamma^* + \tfrac{\pi}{3}) \,,  \nonumber  
\end{align}
where $2 l^*_i$ is the length of $\Gamma_i^*$ $(i=1,2,3)$ and of course $ \kappa_1^*=- \frac{1}{r^*}$.

Let $\Phi_i^*:[-l^*_i, l_i^*] \to \mR^2$ be an arc-length parameterization of $\Gamma_i^*$. Hence 
$$
\Gamma^*_i=\left\{\Phi^*_i(x): x \in [-l^*_i, l^*_i]\right\}.
$$ 
Furthermore, set $({\Phi^*_i})^{-1}(\sigma)=x(\sigma)\in \mathbb R $,  for $\sigma\in \Gamma_i^*$. To simplify the presentation,   we hereafter  set 
\begin{equation} \label{DB:x and sigma}
\partial _\sigma w ( \sigma ):=\partial_x(w\circ \Phi_i^*) (x), \quad \sigma = \Phi_i^* (x),
\end{equation}
 that is,   we do not state the parameterization explicitly. Also we slightly  abuse    notation and write 
\begin{equation} \label{D: abuse of notation}
w(\sigma)=w(x)\quad (\sigma \in \Gamma_i^*)\,.
\end{equation}
To parameterize a curve nearby $\Gamma^*_i$, define 
\begin{align}
\Psi_i:\Gamma_i^*\times (-\epsilon,\epsilon)\times (-\delta,\delta)&\longrightarrow \mathbb R^2\,,\\
(\sigma,w,r)&\mapsto \Psi_i(\sigma,w,r):=\sigma+w\, n_i^*(\sigma )+r \, \tau_i^*(\sigma)\,.\nonumber
\end{align}
Here $\tau_i^*$ denotes a tangential vector field on $\Gamma_i^*$ having  support in a neighborhood of $\partial \Gamma_i^*$, which is equal to the outer unit  conormal $n_{\partial \Gamma_i^*}$ at $\partial \Gamma_i^*$.

Define then $\Phi_i = (\Phi_i)_{\rho_i,\mu_i}$ (we often omit for shortness the subscript $(\rho_{i},\mu_i)$) by 
\begin{equation} \label{par2}
\Phi_i:\Gamma_i^* \times [0,T) \rightarrow \mathbb R^2 \,, \quad \Phi_{i}(\sigma,t):=
\Psi_i(\sigma, \rho_i(\sigma,t),\mu_i(\mathrm{pr}_i(\sigma),t))\,,
\end{equation}
for the functions 
\begin{equation} \label{par1}
\rho_i:\Gamma_i^* \times [0,T) \rightarrow (-\epsilon, \epsilon)\,, \quad \mu_i: \Sigma^* \times [0,T) \rightarrow (-\delta,\delta)\,,
\end{equation}
where, similarly as before, $\Sigma^* = \partial \Gamma_i^*= \{p_+^*,p_-^*\}$.

The projection $\mathrm{pr}_i:\Gamma_i^* \rightarrow \Sigma^* $ is defined by imposing the following condition: The point $\mathrm{pr}_i( \sigma ) \in \partial \Gamma_i^*$ has the shortest distance on $\Gamma_i^*$ to $\sigma $. Clearly, in a small neighborhood of $\partial \Gamma_i^*$, the projection $\mathrm{pr}_i$ is well-defined and this is sufficient for us  since this projection is just used in the product $\mu_i(\mathrm{pr}_i(\sigma ),t)\tau_i^*(\sigma )$, where the second term vanishes outside a (small) neighborhood of $\partial \Gamma_i^*$. 

Now let us set, for small $\epsilon, \delta>0$ and fixed $t$, 
$$
(\Phi_i)_t: \Gamma_i^* \rightarrow \mathbb R^2, \quad (\Phi_i)_t(\sigma ):=\Phi_i(\sigma ,t) \quad \forall \sigma \in \Gamma_i^* 
$$
to finally define a new curve 
\begin{equation}\label{def: gamma rho}
\Gamma_{\rho_i,\mu_i}(t):=\mathrm{image}((\Phi_i)_t)\,.
\end{equation}
Observe that for $\rho_i \equiv 0 $ and $\mu_i \equiv 0$, the curve $\Gamma_{\rho_i,\mu_i}(t)$  coincides with $\Gamma_i^*$ for all $t$.

At each triple junction,  we have prepared for a movement in normal and tangential direction, allowing for an evolution of the triple junctions. Therefore,  we can now     formulate the condition, that the curves $\Gamma_i(t)$ meet at the triple junctions at their boundary by  
\begin{equation} \label{Con: triple}
 \Phi_1(\sigma,t) = \Phi_2( \sigma ,t ) = \Phi_3( \sigma ,t) \quad\text{ for } \sigma \in \Sigma^*, \, t\ge 0\,.
\end{equation}
 Next we prove that this condition leads to a linear dependency 
at the boundary points. As a result, nonlocal terms will eventually enter 
into  PDE-formulations of the geometric evolution problem.
\begin{lem} \label{Con:contact}
Equivalent to the equations \eqref{Con: triple} are the following conditions
\begin{equation}\label{Con: equi triple}
\left \{
\begin{aligned}
\mathrm{(i)}\quad \; \: 0&=   \rho_1 + \rho_2 + \rho_3  && \quad\text{ on } \Sigma^*  , \, \\
\mathrm{(ii)} \quad \mu_i &= -\tfrac{1}{\sqrt{3}}(\rho_j - \rho_k) && \quad\text{ on } \Sigma^*  ,    
  \end{aligned}
\right.
\end{equation}
for $(i,j,k) = (1,2,3), (2,3,1)$ and $(3,1,2)$. 
\end{lem}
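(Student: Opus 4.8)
The plan is to evaluate the contact condition \eqref{Con: triple} pointwise at each triple junction $\sigma \in \Sigma^*$ and to reduce it to a piece of linear algebra in the orthonormal frame attached to $\Gamma^*$. First I would observe that, for $\sigma \in \Sigma^* = \partial \Gamma_i^*$, the projection satisfies $\mathrm{pr}_i(\sigma) = \sigma$, while by construction $\tau_i^*(\sigma) = n_{\partial \Gamma_i^*}(\sigma)$. Hence the definition \eqref{par2} of $\Phi_i$ together with $\Psi_i$ gives
$$
\Phi_i(\sigma,t) = \sigma + \rho_i(\sigma,t)\, n_i^*(\sigma) + \mu_i(\sigma,t)\, n_{\partial \Gamma_i^*}(\sigma), \qquad i = 1,2,3 .
$$
After subtracting the common base point $\sigma$, condition \eqref{Con: triple} becomes precisely the requirement that the vector $\rho_i\, n_i^* + \mu_i\, n_{\partial \Gamma_i^*}$ be independent of $i$. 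Since $n_{\partial \Gamma_i^*}$ is tangent to $\Gamma_i^*$ whereas $n_i^*$ is its unit normal, the pair $(n_i^*, n_{\partial \Gamma_i^*})$ is an orthonormal basis of $\mathbb R^2$ for each $i$, and this is the structure I want to exploit.

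The geometric heart of the matter is a set of identities at the triple junction, which I would establish next. From the $120^\circ$ angle condition (equivalently $\sum_i n_{\partial \Gamma_i^*} = 0$, as recorded in Section \ref{Sec: Equilibria}) one likewise gets $\sum_i n_i^* = 0$, and then $0 = |\sum_i n_i^*|^2 = 3 + 2\sum_{i<j}\langle n_i^*, n_j^*\rangle$ together with the symmetric arrangement forces $\langle n_i^*, n_j^*\rangle = -\tfrac12$ for $i \neq j$. Consequently $\langle n_j^* - n_k^*, n_i^*\rangle = -\tfrac12 + \tfrac12 = 0$, so $n_j^* - n_k^*$ is orthogonal to $n_i^*$ and hence parallel to $n_{\partial \Gamma_i^*}$; moreover $|n_j^* - n_k^*|^2 = 2 - 2(-\tfrac12) = 3$. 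This yields, for cyclic $(i,j,k)$, the key identity
$$
n_{\partial \Gamma_i^*} = -\tfrac{1}{\sqrt 3}\,(n_j^* - n_k^*),
$$
with the sign dictated by the orientation of the normals fixed in Figure \ref{Fig.normals}.

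With these identities the equivalence follows by taking inner products. For the direction \eqref{Con: triple} $\Rightarrow$ \eqref{Con: equi triple}, I write $\xi := \rho_i\, n_i^* + \mu_i\, n_{\partial \Gamma_i^*}$ for the common value; orthonormality gives $\rho_i = \langle \xi, n_i^*\rangle$ and $\mu_i = \langle \xi, n_{\partial \Gamma_i^*}\rangle$. Summing the first relation and using $\sum_i n_i^* = 0$ produces (i), and inserting the key identity into the second gives $\mu_i = -\tfrac{1}{\sqrt 3}\langle \xi, n_j^* - n_k^*\rangle = -\tfrac{1}{\sqrt 3}(\rho_j - \rho_k)$, which is (ii). For the converse I would reverse this computation, reconstructing $\xi$ from (i) and (ii) and verifying $\rho_i\, n_i^* + \mu_i\, n_{\partial \Gamma_i^*} = \xi$ for all $i$; alternatively one notes that \eqref{Con: equi triple} cuts out a two-dimensional subspace of the six unknowns $(\rho_i,\mu_i)$, matching the dimension of the solution space of the four scalar equations arising from \eqref{Con: triple}, so the inclusion of solution sets is in fact an equality.

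The step I expect to be the main obstacle is pinning down the correct sign in $n_{\partial \Gamma_i^*} = -\tfrac{1}{\sqrt 3}(n_j^* - n_k^*)$: orthogonality and the $120^\circ$ angles only determine $n_j^* - n_k^*$ up to sign as $\pm\sqrt 3\, n_{\partial \Gamma_i^*}$, and fixing the minus sign uniformly over the three cyclic choices and over both junctions $p_\pm^*$ requires careful bookkeeping of the normal and conormal orientations set in Figure \ref{Fig.normals}. I would settle this once and for all by a single explicit computation in a convenient orthonormal frame at one triple junction.
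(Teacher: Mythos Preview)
Your argument is correct and follows essentially the same route as the paper: reduce \eqref{Con: triple} at $\sigma\in\Sigma^*$ to the statement that $\rho_i n_i^* + \mu_i n_{\partial\Gamma_i^*}$ is independent of $i$, then read off $\rho_i$ and $\mu_i$ as inner products of the common vector with $n_i^*$ and $n_{\partial\Gamma_i^*}$, and use the $120^\circ$ geometry to obtain (i) and (ii). The only cosmetic difference is that you first isolate the identity $n_{\partial\Gamma_i^*} = -\tfrac{1}{\sqrt3}(n_j^* - n_k^*)$ and then apply it, whereas the paper instead takes the inner product of the vector equation with $n_j^*$, computes $\langle n_i^*, n_j^*\rangle = \cos\tfrac{2\pi}{3}$ and $\langle n_{\partial\Gamma_i^*}, n_j^*\rangle = -\sin\tfrac{2\pi}{3}$ directly, and then invokes (i) to simplify; the two derivations are equivalent, and the paper's explicit angle computation is precisely how it fixes the sign you flag as the delicate point. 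For the converse the paper simply cites \cite[Lemma~2.3]{DepnerGarckelinearized}, so your self-contained dimension count (both conditions cut out a two-dimensional subspace of $(\rho_i,\mu_i)\in\mathbb R^6$, and one inclusion is already shown) is a welcome addition.
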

Here  the  linear dependency (ii)   can be recast as the  matrix equation 
\begin{equation} \label{Eq: matrix rho}
\mu = \mathcal J \rho \quad \text{ on } \Sigma^* ,
\end{equation}
with the notations $\mu = ( \mu_1, \mu_2, \mu_3 )$, $\rho = ( \rho_1, \rho_2, \rho_3 )$ and the matrix
$$ 
\mathcal J = -\frac{1}{\sqrt {3}}
\begin{pmatrix}
 0  & 1 & -1 \\
 -1 & 0 & 1 \\
 1& -1 & 0
  \end{pmatrix}\,.
$$
\begin{proof}
First we prove that   \eqref{Con: triple} implies \eqref{Con: equi triple}.
 Using  the definition of $\Phi_i$, \eqref{Con: triple} can be rewritten as 
\begin{equation} \label{Eq: rho, mu}
\rho_i\, n_i^*+\mu_i \,n_{\partial \Gamma_i^*} = \rho_j\, n_j^*+\mu_j \,n_{\partial \Gamma_j^* } \quad \text{ on } \Sigma^* 
\end{equation}
for $ (i,j)= (1,2), (2,3) $.  By setting 
$$
q := \rho_1\, n_1^*+\mu_1 \,n_{\partial \Gamma_1^*} = \rho_2\, n_2^*+\mu_2 \,n_{\partial \Gamma_2^*} = \rho_3 \, n_3^* + \mu_3 \, n_{\partial \Gamma_3^*} \quad \text{ on } \Sigma^*
$$
we obtain $\rho_i =  \langle q, n_i^* \rangle $ for $i = 1,2,3$. Thus  the angle condition  for  $\Gamma^*$ gives
$$
\sum_{i=1}^3 \rho_i = \sum_{i = 1}^3 \langle q, n_i^* \rangle =\langle q,  \sum_{i = 1}^3 n_i^* \rangle = 0 \,. 
$$  
This proves $\mathrm{(i)}$. 
As a  result of  \eqref{Eq: rho, mu} we see further 
$$
\rho_ i\langle n_i^*, n_j^* \rangle + \mu_i \langle n_{\partial \Gamma_i^*}, n_j^*  \rangle= \rho_{j} \quad \text{ on } \Sigma^* \,.
$$
On the other hand the angle condition  implies  
$$
\langle n_{i}^*, n_j^* \rangle = \cos(\tfrac{2 \pi}{3})\,,  \quad \langle n_{\partial \Gamma_i^*},  n_j^*  \rangle = \cos (2 \pi - (\tfrac{2 \pi}{3}+\tfrac{ \pi}{2})) = -\sin ( \tfrac{2 \pi}{3}) \quad \text{ on } \Sigma^*
$$
for $(i, j) = (1,2), (2,3), (3,1)$.
Therefore using (i)
we conclude $$
\mu_i =- \tfrac{1}{s}(\rho _j - c \rho _i) = - \tfrac{1}{s}((1 + c)\rho_j + c \rho_k) = \tfrac{c}{s}( \rho_j - \rho_k)\,,
$$
where $s:= \sin( \tfrac{2 \pi}{3})$ and $c := \cos( \tfrac{2 \pi}{3})= -\tfrac{1}{2}$ and this yields  assertion (ii).
The proof of the converse statement is explicitly given in \cite[Lemma 2.3]{DepnerGarckelinearized}.
\end{proof}
Note that we  followed \cite{GarckeItoKohsakatriplejunction} in proving   statement (i), while  an easier proof is given here for  assertion (ii). Notice further that \eqref{Con: equi triple} easily implies \begin{equation} \label{Con: mu}
\mu_1 + \mu_2 + \mu_3 = 0   \quad \text{ on } \Sigma^* \,.
\end{equation}
\begin{rem}
Let us now note that it is within this  set, i.e., the set of   all planar double bubbles which can be described as the graph  over $\Gamma^*$, that we  will seek  a solution to the problem \eqref{Eq: 2-double bubble}. 
\end{rem}
Naturally, we  assume  also that the initial double bubble $\Gamma^0$ from \eqref{Eq: 2-double bubble} is  given as  a 
 graph over $\Gamma^*$, i.e.,
$$
\Gamma_i^0 = \{ \Psi_i \big( \sigma, \rho_i^0(\sigma), \mu_i^0(\mathrm{pr}(\sigma)) \big): \sigma \in \Gamma^*_i \}  \quad (i = 1, 2, 3)
$$
for some  function $\rho^0$.
Here $\mu^0 = \mathcal J  \rho^0$ on  $\Sigma^*$ as  $\Gamma^0$ is assumed to be a double bubble, i.e., the curves $\Gamma_i^0$ meet  two triple junctions at their boundaries.

\subsection{ Nonlocal, nonlinear parabolic boundary-value PDE} \label{Sec: DB nonlocal/linear}
The idea is to first derive  evolution equations for $\rho_i$ and $\mu_i$ which have to hold if the $\Gamma_i \,(i = 1,2,3)$ in \eqref{def: gamma rho} satisfy the condition \eqref{Con: triple} and solve  the surface diffusion 
flow \eqref{Eq: 2-double bubble} and then to make use of the linear dependency \eqref{Eq: matrix rho} in  deriving  evolution equations solely for the functions $\rho_i$.

As you may have  noticed,  nonlocal terms will appear in the formulations  since this linear dependency   \eqref{Eq: matrix rho} just holds at the boundary points.

Appendix \ref{App: nonlocal} provides for the   reader's convenience the derivation in detail. Indeed  a similar derivation is done  in \cite{AbelsArabGarcke}, which is originally given in \cite{GarckeItoKohsakatriplejunction},  \cite{DepnerGarckeKohsaka}. 
Therefore, let us present the final   system of fourth-order nonlinear, nonlocal PDEs for $t > 0$, $i = 1, 2, 3 $ and $j = 1, 2, \dots, 6$:
\begin{equation} \label{Eq: D nonlinear semifinal}
\left \{
\begin{aligned}
\partial_t \rho_i &= \mathfrak{F}_i( \rho_i ,  \rho|_{\Sigma^*}) \\
& \quad {}+ \mathfrak{B}_i ( \rho_i ,   \rho|_{\Sigma^*})\big(  \big \{ \mathcal J  \big( I - \mathfrak{B} ( \rho ,\rho|_{\Sigma^*}) \mathcal J \big)^{-1} \mathfrak{F}( \rho ,  \rho|_{\Sigma^*})  \big \} \circ \mathrm{pr}_i \big)_i  & &\text{ on } \Gamma^*_i, \\
0 &= \mathfrak G_j ( \rho )   & &\text{ on } \Sigma^*, 
\end{aligned}
\right.
\end{equation} 
with the initial conditions 
$$
\rho_i (\cdot, 0 ) = \rho_i^0 \text{ on } \Gamma_i^*\,,
$$
where in particular  $\mathfrak{F}_i( \rho_i ,  \rho|_{\Sigma^*})$ is a fourth-order nonlinear equation in $\rho_i$.    
\begin{rem}
Note that the price to pay for obtaining  equations solely for functions $\rho_i$ is the appearance of nonlocal terms, in particular  the nonlocal terms of highest-order (fourth-order) $\mathfrak{F}( \rho ,  \rho|_{\Sigma^*}) \circ \mathrm{pr_i}$ into the formulation. 
\end{rem}  As demonstrated at the beginning of Appendix \ref{App: nonlocal}, the functions ${\mathfrak F}_i, {\mathfrak B}_i,  {\mathfrak G}_j$  are rational functions  in the $\rho$-dependent variables,  with  nonzero denominators  in some neighborhood of $\rho\equiv 0$ in $C^1(\Gamma^*)$ (can   be inside of square roots  equalling to $1$ in some neighborhood of $\rho\equiv 0$ in $C^1(\Gamma^*)$, see the term $ \frac{1}{J_i}$).
\subsection{Linearization around the stationary solution} \label{Sec: D linearization}

The linearization of the surface diffusion equations and the angle conditions around the stationary solution $\rho \equiv 0$ are done in \cite[Lemma 3.2]{DepnerGarckelinearized} and \cite[Lemma 3.4]{DepnerGarckelinearized} respectively.
\begin{rem}
Note that the situation in \cite{DepnerGarckelinearized} is slightly different from ours, but nevertheless the results obtained there are applicable to our problem.
More precisely, the authors in \cite{DepnerGarckelinearized} consider the situation where, apart from the appearance of a triple junction, one has to deal with a fixed boundary. However, as they assume that the triple junction will not touch the outer fixed boundary, they  can use an explicit parameterization, exactly as ours, around a triple junction  and another  parameterization near the fixed boundary and finally  they compose them with the help of a cut-off function. Thus we can  use  their result for each triple junction.
\end{rem}

Therefore, taking into account  the linear dependency (ii) from Lemma \ref{Con:contact},  we get for the linearization of the nonlinear  problem \eqref{Eq: D nonlinear semifinal} around $\rho\equiv0 $ (that is, around the stationary solution $\Gamma^*$) the following linear  system for $i = 1, 2, 3$
\begin{equation} \label{Eq: D linear 1}
\partial _t \rho_i+   \Delta_{\Gamma_i^*} \big(  \Delta_{\Gamma_i^*} \rho_i + (\kappa ^*_i)^2 \rho_i \big) =0 \quad \text{ in } \Gamma_i^* \,, 
\end{equation}
with the boundary conditions on $\Sigma^*$
\begin{equation} \label{Eq: D linear BCs 1}
\left\{
 \begin{aligned}
 \rho_1+ \rho_2  + \rho_3 &=0 \,,  \\
   q^*_i\rho_i + \partial_{n_ {\partial\Gamma_i^*}}\rho_i &= q^*_j \rho_j + \partial_{n_ {\partial\Gamma_j^*}}\rho_j & &(i,j) = (1,2), (2,3), \\
\textstyle{\sum}_{i=1}^3\Delta_{\Gamma_i^*} \rho_i + (\kappa ^*_i)^2 \rho_i &= 0, \\ 
   \partial_{n_ {\partial\Gamma_i^*}} \big( \Delta_{\Gamma_i^*} \rho_1 + (\kappa_i^{*})^2 \rho_i  \big)                     &=\partial_{n_ {\partial\Gamma_j^*}} \big( \Delta_{\Gamma_j^*} \rho_j + (\kappa_j^{*})^2 \rho_j  \big)  \quad & &(i,j) = (1,2), (2,3),                      
 \end{aligned}
\right.
\end{equation}
where $$
q^*_i = -\frac{1}{\sqrt{3}} (\kappa^*_j - \kappa^*_k)
$$
for $(i,j,k) = (1,2,3), (2,3,1)$ and $(3,1,2)$.

 Let us  recall the parameterization (remember our abuse of  notation  \eqref{D: abuse of notation}) and  employ the following facts \begin{equation*}
\begin{aligned}
   \Delta_{\Gamma_i^*}\rho_i                 &= \partial_x^2 \rho_i    & & \text{for }x \in [-l^{*}_i,l_i^*] \,, \\
    \partial_{n_{\partial \Gamma_i^*}}\rho_{i} &=\nabla_{\Gamma_i^*}\rho_{i} \cdot n_{\partial \Gamma_i^*} \\
&= \partial_x \rho_i \,(T^*_i \cdot n_{\partial_{\Gamma_i^*}})=\pm         \partial_x \rho_i                                         & &\text{         at } x=\pm l_i^{*}\,, \\
    \kappa_{n_{\partial \Gamma_i^*}}            &=\kappa_i^*                  & & \text{         at }         x = \pm l_i^* \,, \\ 
\end{aligned}
\end{equation*}
where  $x$ is the arc length parameter of $\Gamma_i^*$ and denote by  $T^*_i$  the tangential vector of $\Gamma_i^*$. We can then  rewrite the linearized problem in terms of functions $ \rho_i:[-l^*_i, l^*_i] \times [0,T) \rightarrow \mR $ as 
$$
\partial _t \rho_i+  \partial_x^2 \big( \partial_x^2 + (\kappa ^*_i)^2 \big)\rho_i =0 \quad \text{ for } x\in [-l_i^{*} , l^{*}_i]
$$
with the boundary conditions 
\begin{equation}
\left\{
 \begin{aligned} \label{Eq: D linear BCs}
 \rho_1+ \rho_2  + \rho_3 &=0 \,,  \\
   q^*_1\rho_1 \pm \partial_x \rho_1 &=q_2^* \rho_2 \pm \partial_x \rho_2 = q^*_3 \rho_3 \pm \partial_x \rho_3 \,,    \\
 {\textstyle\sum_{i =1 }^3} \big( \partial_x^2 \rho_i + (\kappa ^*_i)^2 \rho_i \big) &=0 \,,  \\
   \partial_x \big( \partial_x^2 + (\kappa_1^{*})^2  \big)\rho_1                     &=\partial_x \big( \partial_x^2 + (\kappa_2^{*})^2 \big)\rho_{2} = \partial_x \big(\partial_x^2 + (\kappa_3^{*})^2 \big)\rho_{3} \,.                     
 \end{aligned}
\right.
\end{equation}
In the boundary conditions \eqref{Eq: D linear BCs}  we have omitted  the terms $\pm l^*_i$ in the functions $\rho_i$. That is, for instance the boundary condition $\rho_1 + \rho_2 + \rho_3 = 0 $ should be read as
$$
\rho_1 ( \pm l^*_1) + \rho_2 (\pm l^*_2) + \rho_3 ( \pm l^*_3) = 0 \,. 
$$ 
Furthermore, notice that the linearized problem  is completely local as, in particular, we linearized around a stationary solution.

Now we are in a position to look for a suitable PDE theory in order to answer the question of stability.
The \textit{generalized principle of linearized stability in parabolic Hölder spaces,} proved in \cite{AbelsArabGarcke}, see also \cite{Pruss20093902,pruss2009612},  provides the tool.
\section{The generalized principle of linearized stability in parabolic Hölder spaces}\label{setting}
  
In this section we present the practical tool, proved in \cite{AbelsArabGarcke}, for  proving the stability of equilibria of fully nonlinear parabolic systems with  nonlinear boundary conditions in situations
where the set of stationary solutions creates a $C^2$-manifold of finite dimension
which is normally stable.
The parabolic Hölder spaces are used as function spaces.

 Let $\Omega\subset\mR^n$ be a bounded domain of class $C^{2m + \a}$ for $m \in \mathbb N$, $0<\a<1$ with the  boundary $\partial \Omega$ .  Consider the  nonlinear boundary value problem 
\begin{equation}
\left\{
 \begin{aligned} \label{eq1}
   \partial _t u(t, x) + A (u(t,\cdot))(x) &= F(u(t,.)) (x),  &  &x\in \overline{\Omega}\,,         & &t>0\,, \\[0.1cm] 
   B_j (u(t,\cdot))(x)                     &= G_j(u(t,.))(x), &  &x\in\partial\Omega\,,
        & &j= {1,\dots, mN}\,,\\[0.1cm]
   u(0, x)                                 &= u_{0}(x),       &  &x\in \overline{\Omega}\,,         \\[0.1cm]
 \end{aligned}
\right.
\end{equation}
where $u : \o \times [ 0, \infty ) \rightarrow \mathbb{R}^N$. 
Here $A$ denotes a linear $2m$th-order partial differential operator having the form

\begin{equation*}
(Au)(x)=\sum_{|\gamma|\leq 2m}a_\gamma(x)\nabla^{\gamma}u (x)\,, \quad x\in\overline{ \Omega}\,,
\end{equation*}
and  $B_j$ denote   linear partial differential operators of order  $m_j$, i.e., 
\begin{equation*}
 (B_ju)(x)=\sum_{|\beta|\leq m_{j}}b^{j}_\beta(x)\nabla^{\beta}u (x)\,,\quad x\in \partial \Omega \,,\quad j= 1,\dots,mN\,, 
\end{equation*}
with $
0 \leq m_1 \leq m_2 \leq \cdots \leq m_{mN} \leq 2m-1 $. 

The coefficients $a_\gamma(x)\in \mathbb{R}^{N\times N}$, $b^{_{j}}_\beta(x)\in\mathbb{R}^N$. We assume that  
\begin{list}{}{\leftmargin=1.0cm\topsep=0.1cm\itemsep=0.0cm\labelwidth=1cm}
\item[(H2)\,]
the elements of the matrix $a_\gamma (x)$ belong to $C^\a(\o) $ and
\item[] 
the elements of the matrix $b^{j}_\beta(x)$ belong to $C^{2m+\a-m_j}(\partial \Omega).$ 
\end{list}
Concerning the fully
nonlinear terms $F$ and $G_j$, let us suppose 
\begin{list}{}{\leftmargin=1.0cm\topsep=0.2cm\itemsep=0.1cm\labelwidth=1cm}
\item[(H1)]
$F:B(0,R)\subset C^{2m}(\overline{\Omega})\rightarrow C(\overline{\Omega}) $ is $C^1$ with Lipschitz continuous derivative, $F(0)=0, F'(0)=0,$ and the restriction of $F$ to $B(0,R)\subset C^{2m+\a}(\overline\Omega)$ has values in $C^\a(\overline\Omega) $ and is continuously differentiable,
\item[]
$G_j:B(0,R)\subset C^{m_{j}}(\overline{\Omega})\rightarrow C(\partial\Omega) $ is $C^2$ with Lipschitz continuous  second-order derivative,
$G_j(0)=0, G_{j}'(0)=0,$ and the restriction of $G_{j}$ to $B(0,R)\subset C^{2m+\a}(\overline\Omega)$ has values in $C^{2m+\a-m_j}(\partial\Omega) $ and is continuously differentiable.
\end{list}
We set $B=(B_1,\dots, B_{mN})$ and $G=(G_1,\dots ,G_{mN})$.

We denote by $\mathcal{E}\subset B_{X_1}(0,R)$ the set of stationary solutions  of \eqref{eq1}, i.e.,
\begin{equation}\label{set of equilibria E}
u\in \mathcal{E}\iff  u\in B_{X_1}(0,R)\,, \: Au=F(u)\quad \mbox{in } \Omega , \quad Bu=G(u)\quad \mbox{on }\partial \Omega \,,
\end{equation}
where $X_1 = C^{2m + \a }(\o)$.
The   assumption (H1) in particular implies that 
$$
u ^*\equiv 0 \text{ belongs to } \mathcal{E}\,.
$$
Now the key assumption is  that near  $u^* \equiv 0$ the set of equilibria $\mathcal E$ creates a   finite dimensional $C^2$-manifold. In other words we assume: 
There is a neighborhood $U\subset \mR^k$ of $0\in U$, and a $C^2$-function $ 
\Psi:U\rightarrow X_1
$, such that 
\begin{alignat}{1}
   \bullet \quad &\Psi (U)\subset \mathcal{E} \text{ and } \Psi(0)=u^*\equiv         0 , \nonumber\\
   \bullet \quad & \text{the rank of $\Psi'(0)$ equals $k$.} \nonumber
\end{alignat}
Moreover, we at last require that there are no other stationary solutions near $u^* \equiv 0$ in $X_1$ than those given by $\Psi(U)$. That is we assume for some $r_1>0$,
$$\mathcal{E}\cap B_{X_1}(u^*,r_1)=\Psi(U) \, .
$$

The linearization of \eqref{eq1} at $u^* \equiv 0$ is given by the operator $A_0$ which is the realization of $A$ with homogeneous boundary conditions in $X=C(\o)$, i.e., the operator with domain
\begin{eqnarray}\label{linear operator}
\begin{array}{l}
D(A_0)=\Big\{ u\in C(\o)\cap\bigcap\limits_{1< p<+\infty }W^{2m,p}(\Omega):\,\ Au\in  X, \quad Bu=0 \text{ on } \partial \Omega\Big\},\\[20pt]
\quad A_0u= A u,\quad u\in D(A_0)\,.
\end{array}
\end{eqnarray}

Let  $\nu (x)$ denote the  outer normal of $\partial \Omega$ at $x \in \partial \Omega $.
We assume further the \textit{normality condition}: 
\begin{eqnarray}\label{normality condition}
\left\{
\begin{array}{l}
\text{for each } x\in \partial \Omega, \text{ the matrix } 
  \begin{pmatrix}
  \sum_{| \beta | = k}b^{j_1}_\beta(x)(\nu(x))^\beta\\ \vdots \\ \sum_{| \beta | = k}b^{j_{n_{k}}}_\beta(x)(\nu(x))^\beta
  \end{pmatrix}
  \text{is surjective},\quad\\ [0.9cm]
  \text{where } \{ \, j_i : i=1, \dots, n_k \, \}=\{ \, j : m_j = k  \, \} \,, 
\end{array}
\right.
\end{eqnarray}

Suppose at last the following first-order compatibility conditions  holds:
For $j$ such that $m_j=0$ and $ x\in \partial \Omega$
\begin{equation} \label{compatibility}
\left\{
 \begin{aligned}
    B u^0                  &= G( u^0 )\,, \\
    B_j( A u^0- F( u^0 )) &= G_j' (u^0)(A u^0-F(u^0)) \,.\\
 \end{aligned}
\right.
\end{equation}
\begin{theo}\label{theo01} 
\emph{(\cite[Theorem 3.1]{AbelsArabGarcke})}
Let $ u^* \equiv 0$ be a stationary solution  of \eqref{eq1}.  Assume  that the regularity conditions (H1), (H2), the Lopatinskii-Shapiro
condition, the strong parabolicity  and finally the normality condition  \eqref{normality condition} hold.    Moreover assume that   $u^*$ is normally stable, i.e., suppose that
\begin{enumerate}[\upshape (i)]
\item
near $u^*$ the set of equilibria $ \mathcal{E}$ is a $C^{2}$-manifold  in $X_1$ of dimension $k\in \mathbb{N}$,
\item
the tangent space of $ \mathcal{E}$ at $u^*$ is given by $N\l A_0\r$,
\item
the eigenvalue $0$ of $A_0$ is  semi-simple, i.e., $R\l A_0\r\oplus N \l A_0 \r= \MakeUppercase{X,}$
\item
$\sigma \l A_0 \r \setminus  \{ 0 \}\subset \mathbb{C}_+=\{z\in \mathbb{C}: \mbox{Re }z>0\}$.
\end{enumerate}
Then the stationary solution $u^*$ is stable in $ X_1$. Moreover, if $u^0$ is sufficiently close to $u^*$ in $X_1$ and  satisfies the compatibility conditions \eqref{compatibility}, then the problem \eqref{eq1} has a unique solution in the parabolic Hölder spaces, i.e., 
$$
 u \in C^{1+\frac{\alpha}{2m}, 2m+\alpha} \big( [0, \infty)\times\o \big)
 $$and approaches some  $u^\infty\in \mathcal{E}$ exponentially fast in $X_1$ as $t \to \infty$.
\end{theo}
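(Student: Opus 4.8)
The plan is to prove the generalized principle of linearized stability by adapting the $L^p$-maximal-regularity argument of Pr\"uss--Simonett--Zacher to the parabolic H\"older scale. First I would recast \eqref{eq1} as a semilinear abstract evolution equation $\partial_t u + A_0 u = \mathcal{F}(u)$ on $X = C(\overline{\Omega})$, where $A_0$ is the realization \eqref{linear operator} and $\mathcal{F}$ encodes the interior nonlinearity $F$ together with the boundary data $G$, the latter lifted into the domain by solving an auxiliary inhomogeneous boundary-value problem. The hypotheses (H1), (H2), strong parabolicity, the Lopatinskii--Shapiro condition and the normality condition \eqref{normality condition} guarantee that $A_0$ generates an analytic semigroup and, decisively, that the linear problem enjoys maximal regularity in the anisotropic space $C^{1+\alpha/2m,\,2m+\alpha}$. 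Since $\mathcal{F}(0)=0$ and $\mathcal{F}'(0)=0$ by (H1), every nonlinear contribution is of higher order near $u^*\equiv 0$.

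Next I would exploit the spectral hypotheses. By semi-simplicity (iii) we have the topological direct sum $X = N(A_0)\oplus R(A_0)$; let $P$ be the spectral projection onto the finite-dimensional kernel $N:=N(A_0)$, and set $R:=R(A_0)$. Since $0$ is an isolated semi-simple eigenvalue and, by (iv), the rest of the spectrum lies in $\mathbb{C}_+$, the restricted part $A_0|_R$ has spectrum bounded away from the imaginary axis, so its semigroup decays like $e^{-\omega t}$ for some $\omega>0$. Using (i) and (ii) I would represent the equilibrium manifold $\mathcal{E}$ near $u^*$ as a graph over its tangent space: there is a $C^2$ map $\phi:N\to R$ with $\phi(0)=0$ and $\phi'(0)=0$ such that locally $\mathcal{E}=\{\,\zeta+\phi(\zeta):\zeta\in N,\ \|\zeta\|\text{ small}\,\}$. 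Condition (ii) is essential here, as it identifies $T_{u^*}\mathcal{E}$ with $N(A_0)$ and thereby makes $P$ transversal to $\mathcal{E}$.

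The central step is a change of variables decoupling the neutral and stable directions. For a solution I would set $z(t)=Pu(t)\in N$ and introduce the deviation from the manifold $w(t)=(I-P)u(t)-\phi(z(t))\in R$. Differentiating and using $A_0\zeta=0$ for $\zeta\in N$ yields a system in which the $z$-equation has no linear part and whose right-hand side vanishes when $w=0$ (the graph consists of equilibria), so $\dot z$ is controlled by $w$ and higher-order terms, while the $w$-equation carries the exponentially stable generator $A_0|_R$ plus higher-order perturbations. Local well-posedness in $C^{1+\alpha/2m,\,2m+\alpha}([0,T]\times\overline{\Omega})$ then follows from a contraction argument built on the linear maximal-regularity estimate and the Lipschitz bounds in (H1); the first-order compatibility conditions \eqref{compatibility} are precisely what make the solution H\"older-regular up to $t=0$.

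Finally I would close with a priori estimates. Feeding the exponential decay of the $w$-semigroup and the higher-order structure into a continuation/bootstrap scheme shows that, for data close to $u^*$, the deviation $w(t)$ decays like $e^{-\omega t}$ and, crucially, $\dot z(t)$ becomes integrable in time; hence $z(t)\to z_\infty$ and $u(t)\to z_\infty+\phi(z_\infty)\in\mathcal{E}$ exponentially fast, which yields both stability in $X_1$ and the asserted convergence. The main obstacle I anticipate is twofold: establishing maximal regularity in the parabolic H\"older spaces for the full boundary-value operator, rather than in the $L^p$ setting where the theory is standard; and controlling the neutral $z$-direction, where the linearization provides no decay and convergence must instead be extracted from the integrable-in-time coupling to the exponentially decaying component $w$.
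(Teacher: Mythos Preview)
The paper does not prove this theorem: it is quoted verbatim from \cite[Theorem 3.1]{AbelsArabGarcke} and used as a black box, with the remark following it referring the reader to that reference for a complete treatment. So there is no ``paper's own proof'' to compare against.

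That said, your outline is the right one and matches the strategy actually carried out in the cited reference. The argument there is precisely the adaptation of the Pr\"uss--Simonett--Zacher normal-form reduction to the parabolic H\"older scale: one establishes maximal regularity for the linear boundary-value problem in $C^{1+\alpha/2m,\,2m+\alpha}$ (this is where (H2), strong parabolicity, Lopatinskii--Shapiro and the normality condition enter), lifts the nonlinear boundary data into the equation, splits $X=N(A_0)\oplus R(A_0)$ via the spectral projection associated with the semi-simple eigenvalue $0$, writes $\mathcal E$ locally as a graph $\zeta\mapsto \zeta+\phi(\zeta)$ over $N(A_0)$ using (i) and (ii), and then studies the coupled $(z,w)$-system you describe. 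The two obstacles you flag---maximal H\"older regularity with inhomogeneous boundary conditions, and controlling the neutral direction through the integrability of $\dot z$---are exactly the points where the work in \cite{AbelsArabGarcke} goes beyond \cite{Pruss20093902,pruss2009612}. Your sketch would constitute a faithful summary of that proof.
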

\begin{rem}
We refer to   \cite[Section 2]{AbelsArabGarcke}   for the  definitions of Lopatinskii-Shapiro condition and the strong parabolicity as well as for a complete treatment.\end{rem}
In order to apply this theorem to prove stability, we must first show   that our nonlinear, nonlocal problem \eqref{Eq: D nonlinear semifinal} has the form \eqref{eq1}. We then  devote the rest of the paper  to show  that the  problem \eqref{Eq: D nonlinear semifinal} verifies  all  hypothesis of   Theorem \ref{theo01}. 
\section{Verifying the hypotheses of Theorem \ref{theo01}}
\subsection{General setting} \label{Sec: general setting}
If we  change the variables   by setting 
for each $i =  2, 3$
$$
x =  \frac{ \tilde x + l_1^*}{2 l_1^*}l_i^*+\frac{ \tilde x - l_1^*}{2l_1^*}l_i^* \qquad  \tilde x \in [-l_1^* , l_1^*] \,, 
$$
then we  easily can restate the nonlinear, nonlocal system \eqref{Eq: D nonlinear semifinal} as a perturbation of a linearized problem,  that is of the form \eqref{eq1}, with $\Omega = [-l_1^*, l_1^*]$,
$$
A \rho =   
\begin{bmatrix}
(\mathbf {l}_1^{})^4 &0 & 0\\[0.2cm]
0 & \: \: (\mathbf {l}_2^{})^4 & 0 \\[0.2cm]
0 &0 & (\mathbf {l}_3^{})^4 
\end{bmatrix}
 \partial_x^4\rho +
\begin{bmatrix}
  (\mathbf {l}_1^{}\kappa_1^*)^2 & 0 & 0\\[0.2cm ]
0 & ( \mathbf {l}_2^{}\kappa_2^*)^2 & 0 \\[0.2cm]
0 & 0 & (\mathbf {l}_3^{} \kappa_3^*)^2
\end{bmatrix}
\partial_x^2 \rho \,, 
$$ 
and 
\begin{align*}
B_1 \rho &= 
\begin{bmatrix}
  1 & 1 & 1\\
\end{bmatrix}
\rho \,, \\[5pt]
B_2 \rho &= 
 \pm
\begin{bmatrix}
  \mathbf {l}_1 & -\mathbf {l}_2 & \: \: \: 0 \:\: 
\end{bmatrix}
\partial_x\rho + 
\begin{bmatrix}
  q_1^* & -q_2^* & \: \: \:0  \: \:\\
\end{bmatrix}
\rho \,,\\ 
B_3 \rho &= 
\pm 
\begin{bmatrix}
 \, 0 & \quad\mathbf {l}_2 & -\mathbf {l}_3\\
\end{bmatrix}
\partial_x\rho +
\begin{bmatrix}
  \,0 & \quad q_2^* & - q_3^*\\
\end{bmatrix}
\rho   \,, \\[5pt]
B_4 \rho &=  
\begin{bmatrix}
  (\mathbf {l}_1^{})^2 & (\mathbf {l}_2^{})^2  & (\mathbf {l}_3^{})^2 \\
\end{bmatrix}
\partial_x^2\rho +
\begin{bmatrix}
  (\kappa_1^*)^2 & (\kappa_2^*)^2 & (\kappa_3^*)^2\\
\end{bmatrix}
\rho \,,\\[5pt]
B_5 \rho &=  
\begin{bmatrix}
  (\mathbf {l}_1)^3 & -(\mathbf {l}_2)^3 & \quad \: \;0 \quad\\
\end{bmatrix}
\partial_x^3\rho + 
\begin{bmatrix}
  \mathbf {l}_1^{}(\kappa_1^*)^2 & -\mathbf {l}_2^{}(\kappa_2^*)^2 \hspace{9 pt} & \: \: \: \; 0\! \quad \quad\\
\end{bmatrix}
\partial_x^{}\rho \,,\\ 
B_6 \rho &= 
\begin{bmatrix}
  \: \, \; 0 & \quad \; \; \;(\mathbf {l}_2)^3 & -(\mathbf {l}_3)^3\\
\end{bmatrix}
\partial_x^3\rho + 
\begin{bmatrix}
  \hspace{15pt}0& \hspace{24pt} \mathbf {l}_2^{}(\kappa_2^*)^2 & -\mathbf {l}_3^{}(\kappa_3^*)^2\\
\end{bmatrix}
\partial_x^{}\rho \,. 
\end{align*}
To simplify the presentation, we have dropped the tilde. Here 
$$
\rho_{}: [-l_1^*, l_1^*] \times [0, \infty) \to \mR^3,  \quad\rho_{} = ( \rho_1, \rho_2, \rho_3)^T 
$$
and the constants are given as $\mathbf{l}_i^{} := \dfrac{l^*_1}{l_i^*}$ $(i=1,2,3)$.
 
When  we write \eqref{Eq: D nonlinear semifinal} in the form of \eqref{eq1},  the corresponding $F$ is a smooth function  defined in some neighborhood of $0$ in $C^4( \o)$ having values in $C( \o )$. The reason is that, $F$ is   Fréchet-differentiable of arbitrary order in some neighborhood of $0$ (using the differentiability of composition operators, see e.g. Theorem 1 and 2 of \cite[Section 5.5.3]{RunstSickel}). The  same argument works for the corresponding functions $G_i$. We have obtained that   assumption (H1) is  satisfied.

Obviously, the operators $A$ and $B_j$ satisfy the smoothness assumption (H2) and  the operator $A$ is strongly parabolic.
Now Let us  check  that the Lopatinskii-Shapiro condition (LS) holds. To verify this,   for  $\lambda \in \overline{\mC_+}, \lambda \neq 0$, we consider the  following ODE 
\begin{equation} \label{Eq: D LS}
\left \{
\begin{aligned}
\lambda v_i ( y) + (\mathbf {l}_i)^4 \partial_y^4 v_i ( y ) &= 0, \quad (y > 0) \,,\\
v_1(0) + v_2(0) + v_3(0) &= 0 \,, \\
\mathbf {l}_1 \partial_y v_1(0) &= \mathbf {l}_2 \partial_y v_2(0) = \mathbf {l}_3 \partial_y v_3(0)\,,\\
\textstyle{\sum}_{i=1}^3 (\mathbf {l}_i)^2 \partial_y^2 v_i(0) &= 0 \,,\\
(\mathbf {l}_1)^3 \partial_y^3 v_1 (0)  &= (\mathbf {l}_2)^3 \partial_y^3 v_2(0) = (\mathbf {l}_3)^3 \partial_y^3 v_3 (0)
\end{aligned}
\right.
\end{equation}
and we  show that  $ v \equiv 0$ is the only classical solution that vanishes at infinity.
The energy methods provide a simple proof: We test the first line of the  equation \eqref{Eq: D LS} with the function $\dfrac{1}{\mathbf {l}_i} \overline{v_i}$ and sum for $i = 1,2,3 $ to find 
\begin{align*}
 \sum_{i=1}^3 \frac{\lambda_{}}{\mathbf {l}_i}\int_0^\infty \! |v_i|^2 \,\mathrm dy &= - \sum_{i = 1}^3(\mathbf {l}_i)^3\int_0^\infty \! \overline {v_i} \,\partial_y^4 v_i \,\mathrm dy \\[-5pt]
&=\quad \sum_{i = 1}^3 (\mathbf {l}_i)^3\int_0^\infty \! \partial _y \overline {v_i} \,\partial_y^3 v_i \,\mathrm dy  + \overbrace{ \sum_{i = 1}^3 \overline {v_i}   \Big[(\mathbf {l}_i)^3 \partial^3_y v_i \Big] \Big|_{0}^ \infty }^{=0} \\[-5pt]
&= - \sum_{i = 1}^3 (\mathbf {l}_i)^3\int_0^\infty \! |\partial_y^2 v_i|^2 \,\mathrm dy + \overbrace{ \sum_{i = 1}^3 (\mathbf {l}_i)^2 \partial_y^2 v_i \Big[ \mathbf {l}_i \partial_y \overline{v_i} \Big]\Big|_{0}^\infty}^{= 0} \\
&= - \sum_{i = 1}^3 (\mathbf {l}_i)^3\int_0^\infty \! |\partial_y^2 v_i|^2 \,\mathrm dy \,. 
\end{align*}
Here we have used all  boundary conditions at $y = 0 $ and the fact that the functions $v_i$ and consequently  all their derivatives vanish exponentially at infinity. The latter holds  due to the fact that the solutions of the above equations are  linear combinations of exponential functions.
The facts that  $0 \neq \lambda \in \overline{\mC_+}$ and  $\mathbf {l}_i > 0 $ enforce  $v \equiv 0$.  This verifies the claim. 

Furthermore,  the matrices 
\begin{align*}
\begin{bmatrix}
  1 & 1 & 1 \\
\end{bmatrix}, \hspace{0.8cm}
\begin{bmatrix}
\mathbf {l}_1 & -\mathbf {l}_2 &  0        \\
0       & \mathbf {l}_2  & -\mathbf {l}_3
\end{bmatrix}\,,
\end{align*}
\begin{equation*}
\begin{bmatrix}
  (\mathbf {l}_1)^2 & (\mathbf {l}_2)^2  & (\mathbf {l}_3)^2 
\end{bmatrix}\,, \hspace{0.8cm}
\begin{bmatrix}
  (\mathbf {l}_1)^3 & -(\mathbf {l}_2)^3 & 0            \\
    0         & (\mathbf {l}_2)^3  & -(\mathbf {l}_3)^3
\end{bmatrix} \,
\end{equation*}
are surjective and hence the normality condition \eqref{normality condition}
is satisfied.
\subsubsection{Compatibility condition}
We next  turn our attention to the corresponding compatibility condition \eqref{compatibility}. As we have assumed  the initial planar double bubble $\Gamma^0$  fulfills the contact, angle, the curvature  and the balance of flux condition,   we see $\mu^{0} = \mathcal J  \rho^0$ and  $ \mathfrak G_j (\rho^0 ) = 0$ for $j= 1,2, \dots, 6$. This is exactly the  first condition in  \eqref{compatibility}. 

Concerning the   second equation in the compatibility condition \eqref{compatibility},  the following lemma shows that it is equivalent to  the geometric compatibility condition \eqref{Con: sum laplace curvature} if  the existence of triple junctions and the angle condition for the initial data are  already assumed. 
\begin{lem}
Under the conditions  $ \mathfrak G_j ( \rho^0 ) = 0$ $(j =1,2,3) $ and  $\mu^{0} = \mathcal J  \rho^0 $ on $\Sigma^*$,  the    second equation in the corresponding compatibility condition \eqref{compatibility} and the geometric compatibility condition \eqref{Con: sum laplace curvature} are equivalent, provided  $\rho^0$ is sufficiently  small  in the $C^1$-norm. 
\end{lem}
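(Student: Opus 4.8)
The plan is to reduce the abstract second compatibility condition to a single scalar identity on $\Sigma^*$ and then to recognize that identity as the geometric constraint \eqref{Con: sum laplace curvature} in disguise.

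First I would note that, among the operators $B_1,\dots,B_6$, only $B_1=[\,1\ 1\ 1\,]$ has order $m_j=0$, while $B_2,\dots,B_6$ all have order at least one. Hence the second line of \eqref{compatibility} must be verified only for $j=1$. By Lemma \ref{Con:contact}(i) the associated nonlinear boundary functional $\mathfrak G_1$ is precisely the \emph{linear} map $\rho\mapsto\rho_1+\rho_2+\rho_3$; consequently $G_1=\mathfrak G_1-B_1\equiv 0$ and $G_1'(\rho^0)=0$, so the condition collapses to $B_1\bigl(A\rho^0-F(\rho^0)\bigr)=0$ on $\Sigma^*$. Since rewriting \eqref{Eq: D nonlinear semifinal} in the form \eqref{eq1} gives that $A\rho-F(\rho)$ equals the negative of the right-hand side of \eqref{Eq: D nonlinear semifinal}, this is equivalent to
$$\sum_{i=1}^3 \partial_t\rho_i^0=0 \qquad\text{on }\Sigma^*,$$
where $\partial_t\rho_i^0$ denotes the right-hand side of the $i$-th evolution equation evaluated at the initial datum.

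Next I would translate $\partial_t\rho^0|_{\Sigma^*}$ back into geometry. Differentiating the parameterization $\Phi_i=\sigma+\rho_i n_i^*+\mu_i\tau_i^*$ in time at a junction point and recalling $\tau_i^*=n_{\partial\Gamma_i^*}$ there, the velocity of the $i$-th endpoint is $w_i:=\partial_t\rho_i^0\,n_i^*+\partial_t\mu_i^0\,n_{\partial\Gamma_i^*}$, and, by construction of the evolution equation, its normal velocity is $\langle w_i,n_i^0\rangle=-\Delta_{\Gamma_i^0}\kappa_i^0=:V_i^0$. Because the nonlocal term was built from $\partial_t\mu^0=\mathcal J\,\partial_t\rho^0$, one obtains the algebraic identity $V^0=M\,\partial_t\rho^0$ with the invertible matrix $M=\mathrm{diag}(\langle n_i^*,n_i^0\rangle)\,(I-\mathfrak B\mathcal J)$ coming from the derivation in Appendix \ref{App: nonlocal}; here invertibility and $\langle n_i^*,n_i^0\rangle\neq 0$ are guaranteed for $\rho^0$ small in $C^1$. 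The decisive structural fact is then the following dictionary, which is exactly Lemma \ref{Con:contact} and its converse applied to the velocity vectors $w_i$: the two relations $\sum_i\partial_t\rho_i^0=0$ and $\partial_t\mu^0=\mathcal J\,\partial_t\rho^0$ on $\Sigma^*$ hold simultaneously if and only if $w_1=w_2=w_3=:\dot p$ for a single vector $\dot p\in\mathbb R^2$, the velocity of the triple junction, in which case $V_i^0=\langle\dot p,n_i^0\rangle$.

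The equivalence then follows in both directions from the angle condition. If \eqref{Con: sum laplace curvature} holds, i.e. $\sum_i V_i^0=0$, then since $\sum_i n_i^0=0$ and any two of the $n_i^0$ span $\mathbb R^2$, the system $\langle\dot p,n_i^0\rangle=V_i^0$ is solvable for $\dot p$; setting $\partial_t\rho_i^0=\langle\dot p,n_i^*\rangle$ gives $M\,\partial_t\rho^0=V^0$ and hence $\sum_i\partial_t\rho_i^0=\langle\dot p,\sum_i n_i^*\rangle=0$, which is the PDE compatibility condition. Conversely, if $\sum_i\partial_t\rho_i^0=0$, the dictionary produces a common $\dot p$, whence $\sum_i V_i^0=\langle\dot p,\sum_i n_i^0\rangle=0$, that is \eqref{Con: sum laplace curvature}. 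I expect the main obstacle to be the bookkeeping of the middle step: making the identity $V^0=M\,\partial_t\rho^0$ precise at the junction, matching $M$ to the operator $I-\mathfrak B\mathcal J$ of the nonlocal formulation, and checking that the smallness of $\rho^0$ in $C^1$ keeps $\langle n_i^*,n_i^0\rangle$ away from zero and $I-\mathfrak B\mathcal J$ invertible (the same smallness that guarantees the nonvanishing denominators of $\mathfrak F,\mathfrak B,\mathfrak G$). Once this geometric-to-algebraic translation is in place, the equivalence reduces to the elementary solvability criterion for $\langle\dot p,n_i^0\rangle=V_i^0$ under the constraint $\sum_i n_i^0=0$.
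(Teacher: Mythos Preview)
Your reduction to the single scalar condition $\sum_i\partial_t\rho_i^0=0$ on $\Sigma^*$ is the same as the paper's first step, and your geometric argument via the junction velocity $\dot p$ is sound (up to a sign in the definition of $M$, and the phrase ``setting $\partial_t\rho_i^0=\langle\dot p,n_i^*\rangle$'' in the converse direction should really read ``the vector $(\langle\dot p,n_i^*\rangle)_i$ solves the same invertible system $M r=V^0$, hence coincides with the formula-defined $\partial_t\rho^0$'').

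The paper, however, bypasses the reconstruction of $\dot p$ and the invertibility of $M$ entirely. It observes two purely algebraic facts: the angle conditions $\mathfrak G_2(\rho^0)=\mathfrak G_3(\rho^0)=0$ force $\langle n_i^*,n_i^0\rangle$ and $\mathfrak B_i(\rho^0,\rho^0)$ to be \emph{independent of $i$} on $\Sigma^*$, and the matrix $\mathcal J$ has zero column sums, so $\sum_i(\mathcal Jz)_i=0$ for every $z$. Summing the three evolution equations at the junction then factors as
\[
\sum_{i=1}^3\partial_t\rho_i^0
=\frac{1}{\langle n_1^*,n_1^0\rangle}\sum_{i=1}^3\Delta_{\Gamma_i^0}\kappa_i^0
+\mathfrak B_1\sum_{i=1}^3(\mathcal Jz)_i
=\frac{1}{\langle n_1^*,n_1^0\rangle}\sum_{i=1}^3\Delta_{\Gamma_i^0}\kappa_i^0,
\]
and since $\langle n_1^*,n_1^0\rangle\neq 0$ for small $\rho^0$, the equivalence is immediate. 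Your route has the merit of exposing the geometric content (the PDE compatibility is precisely the statement that the three boundary velocities are restrictions of one ambient vector $\dot p$), whereas the paper's two-line computation trades that insight for brevity and avoids appealing to the invertibility of $I-\mathfrak B\mathcal J$.
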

\begin{proof}
 The second equation in the corresponding first-order compatibility condition \eqref{compatibility} reads as \vspace{-0.2cm}
\begin{align}\label{Eq: 4 compatibility}
 \sum_{i = 1}^3  \mathfrak{F}_i( \rho_i^0 ,  \rho^0) 
+ \mathcal{B}_i ( \rho_i ,   \rho^0) \big(    \mathcal J \, \overbrace{ \big( I - \mathcal{B} ( \rho^0 ,\rho^0) \mathcal J \big) ^{-1} \mathfrak{F}( \rho^0 ,  \rho^0)}^{z:=}\,\big)_i = 0  
\end{align} 
on $\Sigma^*$. Here we have used the facts that the  zeroth-order boundary operator   $B_1 u = \sum_{i = 1}^3 u_i  $ and  $G_1 \equiv 0 $.
Let us remind that 
\begin{align*}
\mathfrak{F}_i ( \rho_i^0 ,  \rho^0)&= \frac{1}{\langle n_i^*, n^0_i \rangle} \Delta \big( .\,  , \rho^0_i , ( \mathcal J \rho^0)_i \big) \kappa_i \big(.\,, \rho_i^0 , ( \mathcal J \rho^0 )_i^{} \big),\quad\mathfrak{B}_i ( \rho^0_i ,   \rho^0)&= \frac{\big \langle n^{}_{\partial \Gamma_i^*} , n_i^0 \big \rangle}{\big \langle n_i^* , n_i^0 \big \rangle},
\end{align*}
and   $\tau_i^* = n^{}_{\partial \Gamma_i^*}$  on $\Sigma^*$. 

On the other hand, the angle condition implies
\begin{align*}
 \big \langle n_i^* , n_i^0 \big \rangle  =\big \langle n_j^* , n_j^0 \big \rangle\,,  \quad \big \langle n^{}_{\partial \Gamma_i^*} , n_i^0  \big \rangle =
  \big \langle n^{}_{\partial \Gamma_j^*} , n_j^0  \big \rangle \qquad \text{on } \Sigma^* .  
\end{align*}
Thus \eqref{Eq: 4 compatibility} can be rewritten as 
$$
\frac{1}{\langle n_1^*, n^0_1 \rangle} \sum_{i =1}^3 \Delta \big( .\,  , \rho^0_i , ( \mathcal J \rho^0)_i \big) \kappa_i \big(.\,, \rho_i^0 , ( \mathcal J \rho^0 )_i^{} \big)+ \mathfrak B_1 \sum_{i = 1}^3 (\mathcal J z )_i  = 0  \quad \text{ on } \Sigma^*   \,, 
$$
where $\langle n_1^*, n^0_1 \rangle \neq 0 $ if $\Gamma^0$ is close enough to $\Gamma^*$ in $C^1$-norm, that is if $\rho^0$ is sufficiently  small  in the $C^1$-norm.

Moreover, due to the definition of the matrix $\mathcal J$, we have 
$$
\sum_{i = 1}^3 \big( \mathcal J y )_i = 0  \quad \forall y \in \mR^3  \,. $$
Hence the compatibility condition  \eqref{Eq: 4 compatibility} is equivalent to 
$$
\sum_{i = 1}^3 \Delta \big( \sigma, \rho^0_i , ( \mathcal J \rho^0)_i \big) \kappa_i \big(\sigma, \rho_i^0 , ( \mathcal J \rho^0 )_i^{}\big)= 0 \,,
$$
which is exactly the geometric compatibility condition \eqref{Con: sum laplace curvature} written in a parameterization. This finishes the proof.
\end{proof}
\subsection{The spectrum of the linearized problem} \label{Sec. DB spectrum}
Since $\Omega = [-l_1^*, l_1^*] \subset \mR$, the linearized operator $A_0$ (see   \eqref{linear operator}) is defined as $A_0 u = A u $ with domain
$$
D(A_0)=\Big\{ u\in C^{4}(\o): Bu=0 \text{ on } \partial \Omega\Big\},
$$
where $A$ and $B$ is defined in Section \ref{Sec: general setting}. 
Due to Remark 2.2 in \cite{AbelsArabGarcke}, the spectrum of the linearized operator $A_0$ consists entirely of eigenvalues. As the analysis of the eigenvalue problem is invariant under  the change of variables, we switch to the setting where the functions $u_i$ ($i = 1,2,3$) have different domains.

Now, the eigenvalue problem for the linearized operator $A_0$  reads as follows:
For $i = 1,2,3$,
\begin{equation} \label{Eq: D eigenvalue linear}
   \Delta_{\Gamma_i^*} \big(  \Delta_{\Gamma_i^*} u_i + (\kappa ^*_i)^2 u_i \big) = \lambda u_i \quad \text{ in } \Gamma_i^* \quad (i=1,2,3)\,, 
\end{equation} 
subject to the  boundary conditions on $\Sigma^*$
\begin{equation} \label{Eq: D eigenvalue linear BCs 1}
\left\{
 \begin{aligned}
 u_1+ u_2  + u_3 &=0 \,,  \\
   q_i^* u_i + \partial_{n_ {\partial\Gamma_i^*}} u_i &= q_j^* u_j + \partial_{n_ {\partial\Gamma_j^*}} u_j \,,  \\
\textstyle{\sum}_{i=1}^3\Delta_{\Gamma_i^*} u_i + (\kappa ^*_i)^2 u_i &= 0 \,, \\ 
   \partial_{n_ {\partial\Gamma_i^*}} \big( \Delta_{\Gamma_i^*} u_1 + (\kappa_i^{*})^2 u_i  \big)                     &=\partial_{n_ {\partial\Gamma_j^*}} \big( \Delta_{\Gamma_j^*} u_j + (\kappa_j^{*})^2 u _j  \big) \,,                
 \end{aligned}
\right.
\end{equation}
where $(i,j) = (1,2), (2,3)$.

To   derive a bilinear form associated with this eigenvalue problem, let us multiply the equation \eqref{Eq: D eigenvalue linear} by $-\big(\Delta_{\Gamma_i^*} \overline{u_i} + (\kappa ^*_i)^2 \overline{u_i}\big)$ and then integrate by parts and sum over $i = 1, 2, 3$  to find 
\begin{align*}
 \sum_{i =1}^3  \int_{\Gamma_i^*}& \big|\nabla_{\Gamma_i^*} \big(  \Delta_{\Gamma_i^*} u_i + (\kappa ^*_i)^2 u_i \big) \big|^2  \mathrm d s
& = -\lambda \sum_{i =1}^3 \int_{\Gamma_i^*} u_i \big( \Delta_{\Gamma_i^*} \overline{u_i} + (\kappa ^*_i)^2 \overline{u_i} \big)  \,\mathrm d s \,.
\end{align*}
Here, as usual,   we have used the last two boundary conditions.
We observe further
\begin{align*}
-\sum_{i =1}^3 \int_{\Gamma_i^*} u_i \big( \Delta_{\Gamma_i^*} \overline{u_i} + (\kappa ^*_i)^2 \overline{u_i} \big)  \,\mathrm  \mathrm d s= \sum_{i =1}^3 \int_{\Gamma_i^*} \big|\nabla_{\Gamma_i^*}u_i \big|^2 &- (\kappa_i^*)^2 |u_i|^2 \,\, \mathrm  \, \mathrm d s \\
&{}-  \sum_{i =1}^3 \int_{\Sigma^*} u_i \, \partial_{n_ {\partial \Gamma_i^*}}u_i\overline{u_i} \,.
\end{align*}
On the other hand 
\begin{align*}
  \sum_{i =1}^3 \int_{\Sigma^*} u_i \, \partial_{n_ {\partial \Gamma_i^*}}\overline{u_i} &=   \sum_{i =1}^3 \int_{\Sigma^*} \big( u_i \, \partial_{n_ {\partial \Gamma_i^*}}\overline{u_i}+ q^*_i |u_i|^2 - q_i^* |u_i|^2 \big) \\
&=    \sum_{i =1}^3 \int_{\Sigma^*} \big( \partial_{n_ {\partial \Gamma_i^*}}u_i + q_i^* u_i \big)\overline{u_i} - \sum_{i =1}^3 \int_{\Sigma^*} q_i^* |u_i|^2 \\
&= \int_{\Sigma^*} \big( \partial_{n_ {\partial \Gamma_1^*}}u_1 + q_1^* u_1 \big)\underbrace{ \sum_{i =1}^3 \overline{u_i}}_{=0} - \sum_{i =1}^3 \int_{\Sigma^*} q_i^* |u_i|^2 \\[-11pt]
& = - \sum_{i =1}^3 \int_{\Sigma^*} q_i^* |u_i|^2 \,.
\end{align*}
We now  combine the three equalities above to discover
\begin{align}\label{Eq: D bilinear}
 \sum_{i =1}^3  \int_{\Gamma_i^*} \big|\nabla_{\Gamma_i^*} \big(  \Delta_{\Gamma_i^*} u_i + (\kappa ^*_i)^2 u_i \big) \big|^2  \mathrm d s
&= \lambda I(u,u) \,,
\end{align}
where
\begin{align} \label{def: D second variation}
I(u,u) &:= \sum_{i =1}^3 \int_{\Gamma_i^*} \big|\nabla_{\Gamma_i^*}u_i \big|^2 - (\kappa_i^*)^2 |u_i|^2 \,\, \mathrm  \, \mathrm d s + \sum_{i =1}^3 \int_{\Sigma^*} q^*_i |u_i|^2 \nonumber \\ 
       & \; {}=-  \sum_{i =1}^3 \int_{\Gamma_i^*} \overline{u_i} \big( \Delta_{\Gamma_i^*} u_i + (\kappa ^*_i)^2 u_i \big)  \,\mathrm d s +     \sum_{i =1}^3 \int_{\Sigma^*}  \big( \partial_{n_ {\partial \Gamma_i^*}}u_i + q_i ^* u_i \big) \overline{u_i}\,. \end{align}

Note carefully that   in \eqref{def: D second variation} we just used integration by parts to obtain the second equality. It is interesting now to see   that although (due to the linearized angle condition and the fact that on the boundary $u_1 + u_2 + u_3 = 0 $) we have  
\begin{equation} \label{Eq: DB weak angle}
\sum_{i =1}^3 \int_{\Sigma^*}  \big( \partial_{n_ {\partial \Gamma_i^*}}u_i + q_i^* u_i \big) \overline{u_i}  = 0 \,,
\end{equation}
but nevertheless  this does not effect the value of $I (u,u)$ (cf. \cite[Remark 3.7]{HutchingsMorganRitorAntonio}). 
\begin{rem}\label{Rem: real}
The identity \eqref{Eq: D bilinear} in particular shows that $\lambda \in \mR$.
\end{rem} 
\begin{rem}
 Indeed as one may have expected, the linearized problem \eqref{Eq: D linear 1}, \eqref{Eq: D linear BCs 1} is the gradient flow of  the energy functional 
$$
E(u)=  \frac{I(u,u)}{2} \,,
$$
with respect to the $H^{-1}$-inner product, see for instance \cite{GarckeItoKohsakatriplejunction}.
\end{rem}

\subsubsection{Related problem: Double bubble conjecture}

The goal of this section is to prove that, a part from  zero, the spectrum of the linearized problem lies in $\mR_{+}$. We do this by considering  the bilinear form $I ( \, , \, )$.

  In the following  we state the  second variation formula    proved in general dimension by Morgan and co-authors:
\begin{pr} \label{Prop: second variation formula} 
\emph{(\cite[Proposition 3.3]{HutchingsMorganRitorAntonio}).}
Let $\Gamma^*$ be a stationary planar double bubble and let  $\varphi_t $ be a  one-parameter variation which preserves the  areas of enclosed regions. Furthermore   denote by $L(t)$ the length of $\varphi_t( \Gamma^*)$. Then   \begin{align*}
\frac{d^2}{dt^2} L(t)\big|_{t = 0} = I(u,u) \,,
\end{align*}
where $u_i = \langle \frac{d}{dt} \varphi_t , n_i^* \rangle $. 
\end{pr}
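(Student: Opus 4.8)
The plan is to obtain the formula by differentiating the first variation of length of the network a second time and evaluating at the stationary configuration $\Gamma^*$, then matching the result with the bilinear form $I(u,u)$ of \eqref{def: D second variation}. Recall from \eqref{Formula: length} that any admissible variation, with normal velocities $V_i$ on $\Gamma_i(t)$ and junction velocity $\tfrac{d}{d\tau}p_\pm$, satisfies
\[
\frac{d}{dt}L(t) = -\sum_{i=1}^3\int_{\Gamma_i(t)} V_i\,\kappa_i\,\mathrm ds + \int_{\Sigma(t)}\sum_{i=1}^3 \nu_{\partial\Gamma_i},
\]
where $\nu_{\partial\Gamma_i} = \langle \tfrac{d}{d\tau}p_\pm, n_{\partial\Gamma_i}\rangle$. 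First I would differentiate this identity once more and set $t=0$. At the equilibrium $\Gamma^*$ the curvatures $\kappa_i^*$ are constant and the angle condition yields $\sum_i n_{\partial\Gamma_i^*}=0$; these two facts make the zeroth-order junction integrand vanish at $t=0$ and eliminate several of the terms created by the product rule.

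The interior part of $I(u,u)$ should come from the term $-\sum_i\int_{\Gamma_i^*} V_i\,\tfrac{d}{dt}\kappa_i\big|_{t=0}$. Using the linearization of curvature under a normal motion, namely $\tfrac{d}{dt}\kappa_i\big|_{t=0}=\Delta_{\Gamma_i^*}u_i+(\kappa_i^*)^2u_i$ (the Jacobi operator already appearing in the linearized flow \eqref{Eq: D linear 1}), and integrating by parts, this contributes $\sum_i\int_{\Gamma_i^*}\bigl(|\nabla_{\Gamma_i^*}u_i|^2-(\kappa_i^*)^2u_i^2\bigr)\,\mathrm ds$ together with a boundary term $-\sum_i\int_{\Sigma^*}u_i\,\partial_{n_{\partial\Gamma_i^*}}u_i$. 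This reproduces exactly the bulk integrand of the first expression for $I(u,u)$ in \eqref{def: D second variation}.

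The decisive step, and the main obstacle, is the junction analysis producing the boundary term $\sum_i\int_{\Sigma^*}q_i^*u_i^2$. Because the three curves must keep meeting at the two moving triple junctions, the variation cannot be purely normal: the tangential motion near each junction is forced by the contact condition and is governed to first order by the linear dependency $\mu=\mathcal J\rho$ of Lemma \ref{Con:contact}. I would therefore track the second-order displacement of $p_\pm(t)$ together with the first-order variation of the conormals $n_{\partial\Gamma_i}$, and show that the remaining junction contributions (those coming from $\tfrac{d}{dt}\!\int_{\Sigma}\sum_i\nu_{\partial\Gamma_i}$ and from $-\sum_i\kappa_i^*\int\tfrac{d}{dt}V_i$) collapse to $\sum_i\int_{\Sigma^*}q_i^*u_i^2$, with the constants $q_i^*=-\tfrac{1}{\sqrt3}(\kappa_j^*-\kappa_k^*)$ emerging from the $120^\circ$ geometry in the same way the coefficient $-\tfrac{1}{\sqrt3}$ arises in Lemma \ref{Con:contact} and in the definition of the $q_i$. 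The area-preserving hypothesis is needed precisely here, to cancel the acceleration terms weighted by the constant curvatures $\kappa_i^*$, so that the second variation ends up depending only on the normal velocity $u$.

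Adding the interior and junction pieces then gives $\tfrac{d^2}{dt^2}L(t)\big|_{t=0}=I(u,u)$ in its first form. As a final consistency check I would verify the identity \eqref{Eq: DB weak angle}, $\sum_i\int_{\Sigma^*}(\partial_{n_{\partial\Gamma_i^*}}u_i+q_i^*u_i)u_i=0$, which reconciles the two expressions for $I(u,u)$ and confirms that the junction bookkeeping has been done correctly. Alternatively, one may simply invoke \cite[Proposition 3.3]{HutchingsMorganRitorAntonio}, where this formula is established in general dimension.
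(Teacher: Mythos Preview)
The paper does not supply its own proof of this proposition; it is stated as a citation of \cite[Proposition~3.3]{HutchingsMorganRitorAntonio} and used as a black box. So your final sentence, ``Alternatively, one may simply invoke \cite[Proposition~3.3]{HutchingsMorganRitorAntonio},'' is in fact exactly the route taken in the paper.

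Your sketch of a direct argument is the correct strategy in outline: differentiate the first variation \eqref{Formula: length} once more, use the linearized curvature $\tfrac{d}{dt}\kappa_i|_{t=0}=\Delta_{\Gamma_i^*}u_i+(\kappa_i^*)^2u_i$ for the bulk term, and rely on area preservation to kill the acceleration terms weighted by the constant $\kappa_i^*$. You also correctly identify the junction analysis as the place where the constants $q_i^*$ must emerge. As written, however, this part remains a plan rather than a proof: you have not actually carried out the computation showing that the second-order motion of $p_\pm(t)$ together with the variation of the conormals collapses to $\sum_i\int_{\Sigma^*}q_i^*u_i^2$, and this is precisely the nontrivial bookkeeping that \cite{HutchingsMorganRitorAntonio} perform. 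If you want a self-contained argument rather than a citation, that step needs to be written out; otherwise, citing the reference as the paper does is entirely appropriate here.
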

Here and hereafter, by (one-parameter) variations $    \{\varphi_t \}_{|t| < \epsilon}:\Gamma \to \mR^2$ of a double bubble $\Gamma \subset \mR^2$ we mean   the variations which are smooth   (up to the  triple junctions)  having equal values along triple junctions.\begin{rem} Notice   that   in \eqref{def: D second variation} we have used  outer unit conormals where inner unit conormals are used in \cite{HutchingsMorganRitorAntonio}. In addition, the constants $q^*_i$ and their corresponding ones in \cite{HutchingsMorganRitorAntonio} are also opposite in signs due to the different choice of normals. This  explains the sign differences.
\end{rem}
\begin{rem}
Of course,  a double bubble is stationary for any variation preserving the area of the enclosed regions if and only if it is stationary for the surface diffusion flow \eqref{Eq: 2-double bubble}, see Section \ref{Sec: Equilibria} and \cite[page 465]{HutchingsMorganRitorAntonio}.
\end{rem}
Following \cite{HutchingsMorganRitorAntonio}, we denote by  $ \mathcal F(\Gamma)$  the space of functions $u \in H^1( \Gamma ) $ satisfying  
\begin{equation*}
\left \{ 
\begin{aligned}
u_1 + u_2 + u_3 = 0 \quad \text{ on } \Sigma \,,\\
\int_{\Gamma_1} \!u_1 = \int_{\Gamma_2} \! u_2 = \int_{\Gamma_3}  \!u_3 \,.
\end{aligned}
\right.
\end{equation*}
\begin{lem} \label{Lem: variation}
\emph{(\cite[Lemma 3.2]{HutchingsMorganRitorAntonio}).} Let $\Gamma^*$ be a stationary double bubble. Then for any smooth $u \in \mathcal F (\Gamma^*)$ there is an area preserving variation $\{ \varphi_t\}$ of $\Gamma^*$  such that  the normal components of the associated infinitesimal vector field are the functions $u_i$, i.e., $u_i = \langle \frac{d}{dt} \varphi_t , n_i^* \rangle$,  $i = 1, 2, 3$. 
\end{lem}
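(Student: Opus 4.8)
\emph{Plan of proof.} The strategy is classical (cf.\ Barbosa--do Carmo): first build a variation realizing the prescribed normal velocities while respecting the triple-junction structure, and then correct it by an implicit-function-theorem argument so that the enclosed areas are preserved \emph{exactly}, without disturbing the first-order data.

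First I would construct a \emph{base} variation $\{\varphi_t^0\}$ of $\Gamma^*$ whose infinitesimal normal components are the $u_i$. The only genuinely double-bubble-specific point is the behaviour at the two triple junctions $p_\pm^*$. There the three conormals satisfy the angle relation $\sum_{i=1}^3 n_i^* = 0$ (item (ii) of Section \ref{Sec: Equilibria}); hence the linear system $\langle V, n_i^*\rangle = u_i(p_\pm^*)$, $i=1,2,3$, for the unknown velocity $V\in\mR^2$ is solvable precisely because $u\in\mathcal F(\Gamma^*)$ forces $\sum_i u_i(p_\pm^*) = 0$, and, since any two of the $n_i^*$ are linearly independent, the solution $V_\pm$ is unique. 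I then extend, on each $\Gamma_i^*$, the field $u_i\, n_i^*$ to a smooth field $X_i$ agreeing with $V_\pm$ at the endpoints, adding only a tangential correction near $\partial\Gamma_i^*$ (for instance a multiple of the field $\tau_i^*$ introduced in Section \ref{Sec: PDE and Linear}), which does not affect the normal component. Integrating the $X_i$, so that the junction points trace curves $p_\pm(t)$ with $p_\pm(0)=p_\pm^*$ and $\dot p_\pm(0)=V_\pm$, yields a family for which, for every $t$, the three image curves still meet at the two common points $p_\pm(t)$; thus $\{\varphi_t^0\}$ is an admissible variation of the double bubble with $u_i = \langle \tfrac{d}{dt}\varphi_t^0, n_i^*\rangle$.

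Next I would record that $\{\varphi_t^0\}$ is area-preserving \emph{to first order}. Using the area-rate formula recalled in Section \ref{Sec: the geometric setting}, namely $\tfrac{d}{dt}\,|R_1(t)| = \int_{\Gamma_1} V_1 - \int_{\Gamma_2}V_2$ together with the analogous identity for $R_2$, the defining constraints $\int_{\Gamma_1}u_1 = \int_{\Gamma_2}u_2 = \int_{\Gamma_3}u_3$ of $\mathcal F(\Gamma^*)$ give exactly $\tfrac{d}{dt}|R_1(t)|\big|_{t=0} = \tfrac{d}{dt}|R_2(t)|\big|_{t=0} = 0$. To upgrade this to exact preservation I introduce two auxiliary variations with infinitesimal fields $Y_1, Y_2$ supported in the interiors of $\Gamma_1^*$ and $\Gamma_3^*$ respectively, so that they fix the junctions and remain admissible automatically (the condition $u_1+u_2+u_3=0$ holds trivially since they vanish on $\Sigma^*$), chosen so that the $2\times2$ matrix of first-order area changes $\partial(|R_1|,|R_2|)/\partial(s_1,s_2)$ is invertible, e.g.\ diagonal, since $Y_1$ changes only $|R_1|$ and $Y_2$ only $|R_2|$ at first order. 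Composing the base variation with these auxiliary ones produces a two-parameter family with area map $A(t,s_1,s_2) = (|R_1|,|R_2|)$ satisfying $A(0,0)=(|R_1^*|,|R_2^*|)$, $\partial_s A(0,0)$ invertible, and $\partial_t A(0,0)=0$.

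Finally, the implicit function theorem provides a $C^1$ curve $s(t)$ with $s(0)=0$ solving $A(t,s(t)) = A(0,0)$ for all small $t$, so that the reparametrized family $\varphi_t$ (the two-parameter family evaluated at $s=s(t)$) preserves both enclosed areas exactly. Differentiating $A(t,s(t))\equiv A(0,0)$ and using $\partial_t A(0,0)=0$ with the invertibility of $\partial_s A(0,0)$ yields $s'(0)=0$; hence $\tfrac{d}{dt}\varphi_t\big|_{t=0} = \tfrac{d}{dt}\varphi_t^0\big|_{t=0}$, and the corrected variation still has normal components $u_i$. The main obstacle is the first step: making the base variation respect the triple-junction structure for all $t$ (not merely infinitesimally), which is exactly where the constraint $u_1+u_2+u_3=0$ on $\Sigma^*$ enters; the area correction, by contrast, is the standard finite-dimensional reduction.
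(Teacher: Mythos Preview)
The paper does not prove this lemma; it is merely quoted from \cite[Lemma~3.2]{HutchingsMorganRitorAntonio}. Your argument is correct and is precisely the classical Barbosa--do~Carmo construction adapted to the triple-junction setting, which is also the route taken in the cited reference: build an admissible variation with the prescribed normal speed (using the compatibility $u_1+u_2+u_3=0$ on $\Sigma^*$ to match the three branches at the junctions), and then apply the implicit function theorem against two compactly supported bump variations to restore the two area constraints exactly without perturbing the first-order data. One small wording issue: ``integrating the $X_i$'' is slightly misleading since the $X_i$ are $\mR^2$-valued and not tangent to $\Gamma_i^*$; it is enough to take $\varphi_t^0(\sigma)=\sigma+tX(\sigma)$ (or to extend $X$ to a neighbourhood and flow), which already gives a family of curves meeting at $p_\pm(t)=p_\pm^*+tV_\pm$ for small $t$.
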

We are now ready to present:
\begin{df}[The concept of stability in differential geometry]
A double bubble $\Gamma^*$ is said to be variationally stable if it is stationary and 
$$
I (u,u) \geq 0 \qquad \forall \, u \in \mathcal F ( \Gamma^*)\,.
$$
\end{df}
We are forced  here to name the concept of stability in differential geometry  \textit{variationally stable} instead of \textit{stable}. Indeed it is an open problem whether for  double bubbles this concept of stability in differential geometry is equivalent to the concept of stability in  PDE theory.
There are several  evidences in this work which show how closely these two concepts are, starting from Lemma \ref{Lem: spectrum negative} below.
\begin{rem}
Note that the   concept of stability in differential geometry was called  \textit{stable} in \cite{HutchingsMorganRitorAntonio}.
\end{rem}

\begin{cor}
A  perimeter-minimizing double bubble for prescribed areas is variationally stable.
\end{cor}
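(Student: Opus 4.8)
The plan is to combine the second variation formula (Proposition \ref{Prop: second variation formula}) with the realization Lemma \ref{Lem: variation} and the defining property of a minimizer. First I would record that a perimeter-minimizing double bubble $\Gamma^*$ is indeed \emph{stationary}, which is required for Proposition \ref{Prop: second variation formula} to apply: minimizing length among all configurations enclosing the prescribed areas forces the first variation of length under every area-preserving variation to vanish, and this yields constant curvatures with $\kappa_1 + \kappa_2 + \kappa_3 = 0$ together with the $120^\circ$ angle conditions. By the characterization in Section \ref{Sec: Equilibria}, these are exactly the conditions for $\Gamma^*$ to be a stationary solution of \eqref{Eq: 2-double bubble}.

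Next, fix any smooth $u \in \mathcal F(\Gamma^*)$. By Lemma \ref{Lem: variation} there is an area-preserving variation $\{\varphi_t\}$ of $\Gamma^*$ whose infinitesimal normal components are precisely the $u_i$, i.e.\ $u_i = \langle \frac{d}{dt}\varphi_t, n_i^* \rangle$. Since $\Gamma^*$ minimizes perimeter among all configurations with the same enclosed areas and $\{\varphi_t\}$ preserves those areas, the length function $L(t)$ attains a local minimum at $t = 0$; in particular $\frac{d^2}{dt^2} L(t)\big|_{t=0} \geq 0$. Proposition \ref{Prop: second variation formula} then gives $I(u,u) = \frac{d^2}{dt^2} L(t)\big|_{t=0} \geq 0$ for every smooth $u \in \mathcal F(\Gamma^*)$.

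Finally I would upgrade this to all of $\mathcal F(\Gamma^*)$ by a density argument. The quadratic form $I(\cdot,\cdot)$ in \eqref{def: D second variation} involves only the $H^1$-norm of $u$ on the arcs together with the boundary term $\sum_{i} \int_{\Sigma^*} q_i^* |u_i|^2$, and the latter is controlled via the one-dimensional trace/Sobolev embedding $H^1(\Gamma_i^*) \hookrightarrow C(\overline{\Gamma_i^*})$; hence $u \mapsto I(u,u)$ is continuous on $H^1(\Gamma^*)$. Since smooth functions obeying the linear constraints defining $\mathcal F(\Gamma^*)$ are $H^1$-dense in $\mathcal F(\Gamma^*)$, the inequality $I(u,u) \geq 0$ passes to the closure, so it holds for all $u \in \mathcal F(\Gamma^*)$. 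Together with stationarity this is exactly the assertion that $\Gamma^*$ is variationally stable.

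I expect the main obstacle to be this last step: checking that $I$ is continuous in the $H^1$-topology, where the boundary term must be absorbed through the one-dimensional trace embedding, and that the smooth members of $\mathcal F(\Gamma^*)$ are genuinely $H^1$-dense despite the pointwise constraint $u_1 + u_2 + u_3 = 0$ on $\Sigma^*$ and the three integral constraints. The geometric content is carried entirely by Lemma \ref{Lem: variation} and Proposition \ref{Prop: second variation formula}, so the remaining effort is this functional-analytic approximation.
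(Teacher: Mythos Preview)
Your proposal is correct and follows essentially the same route as the paper: use Lemma \ref{Lem: variation} to realize smooth $u\in\mathcal F(\Gamma^*)$ as normal components of an area-preserving variation, invoke minimality to get $L''(0)\ge 0$, and apply Proposition \ref{Prop: second variation formula} to conclude $I(u,u)\ge 0$. The paper's proof is terser---it neither spells out the stationarity of a minimizer nor the density step from smooth $u$ to all of $\mathcal F(\Gamma^*)$---so your version is in fact a more complete rendering of the same argument.
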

\begin{proof}
Let $\Gamma$ be a  primeter-minimizing double bubble. As a minimizer, the second derivative of length is nonnegative along all variations which preserve the area, in other words by Proposition \ref{Prop: second variation formula} $I ( u, u) \geq 0$ for all functions $u$ given by  normal components of   area preserving variations. On the other hand, by Lemma \ref{Lem: variation} we know that  every  smooth element of $\mathcal F ( \Gamma)$ is of this form. Therefore  $I ( u, u) \geq 0 $ for all $u \in \mathcal F ( \Gamma)$, which  finishes the proof.
\end{proof}
\begin{theo}
\emph{(\cite[Theorem 2.9]{FoisyAlfaroBrock...}).}
The standard planar double bubble is the unique perimeter-minimizing double bubble enclosing and separating two given regions of prescribed areas.
\end{theo}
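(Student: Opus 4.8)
The plan is to prove this by the direct method of the calculus of variations, combined with the regularity theory for perimeter-minimizing planar clusters and a structural classification of the possible minimizers. First I would set the problem in the framework of sets of finite perimeter: a competitor is a pair of disjoint measurable sets $R_1, R_2 \subset \mR^2$ with prescribed areas $A_1$ and $A_2$, and the quantity to minimize is the total length of the interfaces, i.e., $\mathcal H^1$ of the union of the reduced boundaries, with the shared boundary counted once. Existence of a minimizer then follows from compactness in $BV$: a minimizing sequence has uniformly bounded perimeter, so a subsequence converges in $L^1_{\mathrm{loc}}$ to a configuration of the correct areas. The only real subtlety at this stage is to rule out escape of mass to infinity, which one excludes by a truncation/concentration-compactness argument using the planar isoperimetric inequality to show that splitting off a piece of either region and sending it to infinity strictly increases the total length.

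Next I would invoke the regularity theory for minimizing clusters. The reduced boundary of a planar minimizer consists of finitely many $C^\infty$ arcs; each arc has \emph{constant} curvature because the first variation forces a constant ``pressure'' difference across each interface, so the arcs are circular arcs or line segments. The only admissible singularities are triple junctions at which exactly three arcs meet, since the blow-up of a minimizer at a singular point is an area-minimizing cone in the plane and the only such cone other than a line is the $Y$; the $120$-degree angle condition and the balancing of curvatures then follow from the first-variation equilibrium equations, which are precisely the stationarity conditions recorded in Section \ref{Sec: Equilibria}. Consequently the minimizer is automatically a double bubble built from circular arcs (or segments) meeting at $120$ degrees.

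It then remains to pin down the combinatorial type. Here I would establish a short chain of structural lemmas: each region $R_1, R_2$ is connected, the unbounded exterior component is connected, and there are no empty chambers. Each is proved by exhibiting an area-preserving competitor of strictly smaller total length whenever the property fails (translating disconnected components of a region together, or deleting a superfluous empty chamber). With connectivity and the absence of empty chambers in hand, an Euler-characteristic and arc-counting argument leaves only finitely many combinatorial configurations. A symmetry argument then completes the reduction: reflecting the minimizer across the line joining its two triple junctions, together with the uniqueness of the minimizer, forces the configuration to be symmetric and of the standard topological type. Once the type is fixed, Proposition \ref{Prop: existence SDB} shows that the standard planar double bubble is uniquely determined by the areas $A_1$ and $A_2$, whence it is the unique minimizer.

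I expect the main obstacle to be this structural/classification step rather than existence or regularity. Ruling out the non-standard competitors -- disconnected regions, extra bubbles, empty chambers, and asymmetric configurations -- is exactly where the planar double bubble theorem demands genuine work, and the symmetry reduction that allows the one-parameter family of Proposition \ref{Prop: existence SDB} to exhaust the candidates is the delicate point that cannot be obtained from soft compactness or regularity arguments alone.
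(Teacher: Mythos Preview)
The paper does not give its own proof of this theorem; it is quoted from \cite{FoisyAlfaroBrock...} and used as a black box to obtain Corollary~\ref{Cor: D stable}. So there is nothing to compare your attempt against in the present paper, and strictly speaking no proof is expected here.

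That said, your outline has genuine gaps in the structural step, which you correctly identify as the heart of the matter. First, the symmetry argument is circular as written: you invoke ``uniqueness of the minimizer'' to conclude symmetry, but uniqueness is part of the assertion you are trying to prove. Second, you assume there are exactly two triple junctions, and a line joining them, before connectedness of the regions and the exterior has been established; a priori there could be more junctions, or none. Third, ``translating disconnected components of a region together'' does not work as a competitor construction, since the translated component may collide with the other region or with the exterior boundary. In the original proof of Foisy~et~al.\ the connectedness of the two regions is obtained instead by a relabeling/swapping argument: if a region is disconnected, one shows that reassigning certain components between $R_1$ and $R_2$ (and adjusting areas by a separate isoperimetric comparison) strictly lowers perimeter. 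Only after both regions and the exterior are shown to be connected does an Euler-count force exactly two triple junctions, and then the stationarity conditions plus Proposition~\ref{Prop: existence SDB} finish the identification. Your existence and regularity paragraphs are fine, but the classification paragraph would need to be reworked along these lines to become a proof.
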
 
Therefore as  an important corollary one gets: (see also \cite[Theorem 3.2]{MorganWichiramala}) \begin{cor} \label{Cor: D stable}
The standard planar double bubble is variationally stable.
\end{cor}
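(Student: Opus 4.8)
The plan is to obtain the statement as an immediate consequence of the two results just recorded, so that essentially no new analytic work is required at this point. First I would invoke the uniqueness-and-minimality theorem of Foisy et al.\ quoted above to conclude that the standard planar double bubble is a \emph{perimeter-minimizing} double bubble for its prescribed enclosed areas. Then I would feed this into the preceding corollary, which asserts that any perimeter-minimizing double bubble is variationally stable; chaining the two yields the claim at once.

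To see that the chain is legitimate, it is worth recalling where the content actually sits, since all of it is imported. The preceding corollary rests on combining Proposition \ref{Prop: second variation formula} --- which identifies $I(u,u)$ with the second variation $\frac{d^2}{dt^2}L(t)\big|_{t=0}$ along an area-preserving one-parameter variation --- with Lemma \ref{Lem: variation}, which produces, for every smooth $u \in \mathcal F(\Gamma^*)$, an area-preserving variation whose infinitesimal normal components are exactly the $u_i$. A perimeter minimizer has nonnegative second variation of length along every area-preserving variation, and hence $I(u,u)\ge 0$ for all smooth $u \in \mathcal F(\Gamma^*)$; continuity of $I$ together with density of the smooth functions in $\mathcal F(\Gamma^*)\subset H^1(\Gamma^*)$ then upgrades this to all of $\mathcal F(\Gamma^*)$. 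Since the standard double bubble \emph{is} the minimizer, it inherits variational stability directly.

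The only point worth double-checking --- and the nearest thing to an obstacle --- is a matter of matching conventions rather than of substance: one must ensure that the class of admissible variations entering the definition of variational stability (smooth up to the triple junctions, with equal values along the junctions, and area-preserving) coincides with the class along which the minimizer's second variation is known to be nonnegative, and that the sign and normal conventions of \cite{HutchingsMorganRitorAntonio} are correctly transcribed into ours, exactly as flagged in the remarks preceding this corollary. Once these conventions are aligned, the deduction is a single step and carries no difficulty of its own; the genuine mathematics has all been supplied by the cited minimality theorem and by the second-variation machinery of Proposition \ref{Prop: second variation formula} and Lemma \ref{Lem: variation}.
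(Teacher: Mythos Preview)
Your proposal is correct and follows essentially the same route as the paper: deduce perimeter minimality of the standard planar double bubble from the Foisy et al.\ theorem and then apply the preceding corollary (that perimeter minimizers are variationally stable), with the conventions checked as in the surrounding remarks. The paper presents this as a one-line consequence without further argument, so your additional unpacking of the second-variation machinery and the density step is more detail than the paper gives, but the logic is identical.
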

We are now ready to see the first evidence.

\begin{lem} \label{Lem: spectrum negative}
$\sigma (A_0) \setminus \{ 0 \} \subset \mR_+$. 
\end{lem}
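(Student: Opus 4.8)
The plan is to combine the bilinear identity \eqref{Eq: D bilinear} with the variational stability of $\Gamma^*$ (Corollary \ref{Cor: D stable}). By Remark \ref{Rem: real} every eigenvalue is real, so it suffices to exclude negative eigenvalues. Fix an eigenvalue $\lambda \neq 0$ with a nontrivial eigenfunction $u = (u_1,u_2,u_3)$. The identity \eqref{Eq: D bilinear} reads
$$
\sum_{i=1}^3 \int_{\Gamma_i^*} \big|\nabla_{\Gamma_i^*}\big(\Delta_{\Gamma_i^*} u_i + (\kappa_i^*)^2 u_i\big)\big|^2\,\mathrm d s = \lambda\, I(u,u),
$$
and its left-hand side is nonnegative; it vanishes only if $\Delta_{\Gamma_i^*} u_i + (\kappa_i^*)^2 u_i$ is constant on each arc, which by \eqref{Eq: D eigenvalue linear} forces $\lambda u_i = 0$ and hence $u \equiv 0$. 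Thus for a genuine eigenfunction the left-hand side is strictly positive, so $\lambda$ and $I(u,u)$ share the same nonzero sign. The lemma therefore reduces to showing $I(u,u) > 0$, and for this I want to place $u$ inside the admissible space $\mathcal F(\Gamma^*)$ so that Corollary \ref{Cor: D stable} applies.

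The crucial step is to verify that every eigenfunction with $\lambda \neq 0$ automatically lies in $\mathcal F(\Gamma^*)$. The first boundary condition in \eqref{Eq: D eigenvalue linear BCs 1} already gives $u_1 + u_2 + u_3 = 0$ on $\Sigma^*$, so only the area constraint $\int_{\Gamma_1^*} u_1 = \int_{\Gamma_2^*} u_2 = \int_{\Gamma_3^*} u_3$ remains. To obtain it I would integrate the eigenvalue equation \eqref{Eq: D eigenvalue linear} over each arc and apply the divergence theorem:
$$
\lambda \int_{\Gamma_i^*} u_i\,\mathrm d s = \int_{\Gamma_i^*}\Delta_{\Gamma_i^*}\big(\Delta_{\Gamma_i^*} u_i + (\kappa_i^*)^2 u_i\big)\,\mathrm d s = \sum_{p \in \Sigma^*} \partial_{n_{\partial\Gamma_i^*}}\big(\Delta_{\Gamma_i^*} u_i + (\kappa_i^*)^2 u_i\big)(p).
$$
The last boundary condition in \eqref{Eq: D eigenvalue linear BCs 1} states precisely that the conormal fluxes $\partial_{n_{\partial\Gamma_i^*}}(\Delta_{\Gamma_i^*} u_i + (\kappa_i^*)^2 u_i)$ agree across the three arcs at each triple junction. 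Hence the right-hand side is independent of $i$, so $\lambda \int_{\Gamma_1^*} u_1 = \lambda \int_{\Gamma_2^*} u_2 = \lambda \int_{\Gamma_3^*} u_3$, and dividing by $\lambda \neq 0$ yields the area constraint. This reflects the conservation-of-area structure of the linearized flow: the flux condition is exactly what keeps the linearized areas constant in time, and an eigenmode evolving like $e^{-\lambda t} u$ can respect this only when it is itself area-preserving.

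With $u \in \mathcal F(\Gamma^*)$ established, Corollary \ref{Cor: D stable} gives $I(u,u) \geq 0$. Combined with the strict positivity of the left-hand side of \eqref{Eq: D bilinear}, we conclude $I(u,u) > 0$ and therefore $\lambda > 0$. Together with the reality of the spectrum this proves $\sigma(A_0)\setminus\{0\} \subset \mR_+$. I expect the only delicate point to be the bookkeeping of the conormal orientations in the divergence-theorem step and matching them to the sign conventions used in the definition of $\mathcal F(\Gamma^*)$ (the paper already flags that its conormal and curvature signs differ from those of \cite{HutchingsMorganRitorAntonio}); once the flux condition is read with the correct signs, the remaining argument is essentially immediate.
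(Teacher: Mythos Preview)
Your proof is correct and follows essentially the same route as the paper's own argument: both use the integrated eigenvalue equation together with the flux boundary condition to place the eigenfunction in $\mathcal F(\Gamma^*)$, invoke Corollary \ref{Cor: D stable} to get $I(u,u)\ge 0$, and then use the bilinear identity \eqref{Eq: D bilinear} to rule out $I(u,u)=0$ (the paper phrases the resulting contradiction as $u\in N(A_0)$, you phrase it as $u\equiv 0$, which is the same observation since $\lambda\neq 0$). The order of the steps is slightly rearranged, but there is no substantive difference.
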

\begin{proof}
 Let $\lambda \in \sigma(A_0) \setminus \{ 0 \} $. As mentioned before the spectrum consists entirely of eigenvalues. In addition, according to Remark \ref{Rem: real}, $\lambda$ is real. 

Therefore, let  $\lambda $ be an eigenvalue with a corresponding eigenvector $u \in C^{4 + \a}( \Gamma^{*})$. This means $u$ solves  the eigenvalue problem \eqref{Eq: D eigenvalue linear} subject to the boundary conditions \eqref{Eq: D eigenvalue linear BCs 1} for $\lambda$. Since $ \lambda \neq 0$, we  deduce after integrating \eqref{Eq: D eigenvalue linear}:
\begin{equation*}
\int_{\Gamma^*_1} \! u_1 = \int_{\Gamma_2^*}  \! u_2 = \int_{\Gamma_3^*} \! u_3 \,,
\end{equation*}
 where we employed the divergence theorem and the last boundary condition. This together with the first boundary condition implies that  $u \in \mathcal F ( \Gamma^*)$. Therefore  $I ( u, u) \geq 0$ by Corollary \ref{Cor: D stable}. 

Now assume   $I ( u, u) = 0$. In view of the equation \eqref{Eq: D bilinear}, we obtain 
$$
\Delta_{\Gamma_i^*} u_i + (\kappa ^*_i)^2 u_i = c_i  
$$
for  some constants $c_i$ (cf. \cite[Lemma 3.8]{HutchingsMorganRitorAntonio}). This together with the equation \eqref{Eq: D eigenvalue linear} immediately implies $ u \in N(A_0)$, i.e., $\lambda = 0$, a contradiction. Thus $I ( u,u) > 0$ for the eigenvector $u$. Now $\lambda > 0$ by  \eqref{Eq: D eigenvalue linear}. 
This finishes the proof.
\end{proof}
The bilinear form $I (\,,\,)$ is further discussed in Appendix \ref{Sec: more about bilinear}.
\subsection{Null space of the linearized problem} \label{Section null}
We next determine  the null space of the linearized operator $A_0$. That is, we consider the case  $\lambda = 0$ in the eigenvalue problem \eqref{Eq: D eigenvalue linear},\eqref{Eq: D eigenvalue linear BCs 1}. 

Using the identity  \eqref{Eq: D bilinear}, we  easily get $u \in N(A_0) $ if and only if there exists a constant vector $ c = (c_1, c_2, c_3) \in \mR^3$ such that
\begin{equation} \label{Eq: linear ci}
\Delta_{\Gamma_i^*} u_i + (\kappa ^*_i)^2 u_i = c_i   \quad \text{ on } \Gamma_i^* \quad (i = 1,2,3), 
\end{equation} 
subject to the   conditions 
\begin{equation} \label{Eq: linear B ci}
\left \{
\begin{aligned}
u_1+ u_2  + u_3 &=0 & & \text{ on } \Sigma^* , \\
   q_1^* u_1 + \partial_{n_ {\partial\Gamma_1^*}} u_1 &= q_2^* u_2 + \partial_{n_ {\partial\Gamma_2^*}} u_2 = q_3^* u_3 + \partial_{n_ {\partial\Gamma_3^*}} u_3 && \text{ on } \Sigma^* , \\
c_1 + c_2 + c_3 &= 0 \,. 
\end{aligned}
\right.
\end{equation}
Notice that the constant vector  $ c =  c (u)$ depends linearly on $u$ by  \eqref{Eq: linear ci}.
\begin{df}
Following \cite{HutchingsMorganRitorAntonio}, we define the space of Jacobi functions 
$$
\mathcal J(\Gamma^*) := \big \{ u \in N(A_0) :  c = c (u) = 0 \, \big\}\,.
$$
\end{df}
 
 We need, for  later use, an identity that relates the null space $N(A_0)$ to the bilinear form $I(\,,\,)$.

\begin{lem} \label{Lem: null bilinear}
Assume $u \in N(A_0)$. Then
$$
I(u,u) =-\sum_{i=1}^3  c_i \int_{\Gamma_i^*} \!u_i\,,
$$ 
where the constants $c_i$, satisfying $\sum_{i=1}^3 c_i = 0$,  depend linearly on $u$ by  \eqref{Eq: linear ci}.  
\end{lem}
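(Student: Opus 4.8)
The plan is to start from the characterization of the null space given just before the statement. If $u \in N(A_0)$, then by equation \eqref{Eq: linear ci} there is a constant vector $c = (c_1, c_2, c_3)$ with $\Delta_{\Gamma_i^*} u_i + (\kappa_i^*)^2 u_i = c_i$ on each arc $\Gamma_i^*$, and $\sum_{i=1}^3 c_i = 0$ by \eqref{Eq: linear B ci}. The key observation is that the second, integrated-by-parts expression for $I(u,u)$ in \eqref{def: D second variation} contains exactly the combination $\Delta_{\Gamma_i^*} u_i + (\kappa_i^*)^2 u_i$ under the bulk integral. So the natural approach is simply to substitute the null-space relation into that form of $I$.

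Concretely, I would take the second equality in \eqref{def: D second variation},
$$
I(u,u) = -\sum_{i=1}^3 \int_{\Gamma_i^*} \overline{u_i}\,\big(\Delta_{\Gamma_i^*} u_i + (\kappa_i^*)^2 u_i\big)\,\mathrm{d}s + \sum_{i=1}^3 \int_{\Sigma^*} \big(\partial_{n_{\partial\Gamma_i^*}} u_i + q_i^* u_i\big)\overline{u_i},
$$
and insert $\Delta_{\Gamma_i^*} u_i + (\kappa_i^*)^2 u_i = c_i$ into the bulk term, which collapses it to $-\sum_i c_i \int_{\Gamma_i^*} \overline{u_i}\,\mathrm{d}s$. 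For the boundary term, I would invoke \eqref{Eq: DB weak angle}, which states precisely that $\sum_{i=1}^3 \int_{\Sigma^*} (\partial_{n_{\partial\Gamma_i^*}} u_i + q_i^* u_i)\overline{u_i} = 0$; that identity follows from the boundary conditions \eqref{Eq: linear B ci} (the common value of $q_i^* u_i + \partial_{n_{\partial\Gamma_i^*}} u_i$ can be factored out against $\sum_i \overline{u_i} = 0$), so it applies to any $u \in N(A_0)$. Hence the boundary contribution vanishes and only $-\sum_i c_i \int_{\Gamma_i^*} \overline{u_i}$ survives, giving the claimed formula (with $u_i$ real-valued by Remark \ref{Rem: real}, so the conjugate bars are cosmetic).

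Since nearly all the analytic work is already packaged in \eqref{def: D second variation} and \eqref{Eq: DB weak angle}, the proof is essentially a one-line substitution, and there is no serious obstacle. The only point requiring a moment's care is to make sure one uses the \emph{integrated-by-parts} form of $I(u,u)$ — the second line of \eqref{def: D second variation}, not the first — since that is the version in which the operator $\Delta_{\Gamma_i^*}(\cdot) + (\kappa_i^*)^2(\cdot)$ appears explicitly and can be replaced by the constants $c_i$. The linear dependence of $c = c(u)$ on $u$ and the constraint $\sum_i c_i = 0$ are both already recorded in \eqref{Eq: linear ci} and \eqref{Eq: linear B ci}, so they can simply be cited in the statement's conclusion.
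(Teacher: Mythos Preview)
Your proposal is correct and follows essentially the same route as the paper: the paper's proof is the one-sentence version of exactly what you wrote, inserting \eqref{Eq: linear ci} into the second form of \eqref{def: D second variation} and killing the boundary term via \eqref{Eq: DB weak angle}.
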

\begin{proof}
By inserting  \eqref{Eq: linear ci} into the  definition of  the bilinear form \eqref{def: D second variation} and taking into account the equation \eqref{Eq: DB weak angle} coming from the first two  boundary conditions in  \eqref{Eq: linear B ci}, we get the desired identity.  
\end{proof} 
As a  corollary we get
\begin{cor} \label{Cor: null bilinear}
If $u \in N(A_0) \, \cap \mathcal F( \Gamma^*)$, then $I(u,u) = 0$.
\end{cor}
Let us  rewrite the linear equations \eqref{Eq: linear ci} as a system of linear nonhomogeneous second order ordinary differential equations with constant coefficients   
$$
\partial_x^2 u_i + (\kappa ^*_i)^2 u_i =c_{i} \quad \text{ for } x \in [-l_i^{*} , l^{*}_i] \quad (i = 1,2,3)\,,
$$
with the  conditions
\begin{equation*}
\left \{
\begin{aligned}
u_1+ u_2  + u_3 &=0 \ & & \text{ on } \Sigma^* , \\
   q_1^* u_1 \pm \partial_x u_1 &= q_2^* u_2 \pm \partial_x u_2 = q_3^* u_3 \pm  \partial_{x} u_3 & & \text{ on } \Sigma^* , \\
c_1 + c_2 + c_3 &= 0 \, 
\end{aligned}
\right.
\end{equation*}
for the functions  $ u_i:[-l^*_i, l^*_i]  \rightarrow \mR $.
\subsubsection{Determination of   Jacobi functions
}

Let us first consider the case $\kappa^*_2 \neq 0$. The general solution of the  linearized problem is then
\begin{equation} \label{Eq: DB general solution}
u_i (x) = a_i \sin (\kappa_i^* x) + b_i \cos (\kappa_i^* x)  \quad (i=1,2,3)\,.
\end{equation}
We   calculate at  $x = \pm l^*_1$
\begin{align*}
q_1^* u_1 &= \mp \cot ( \gamma^* + \tfrac{\pi}{3}) \kappa_1^* a_1 \sin(\gamma^* + \tfrac{\pi}{3} )  +\cot ( \gamma^* + \tfrac{\pi}{3}) \kappa_1^* b_1 \cos( \gamma^* + \tfrac{ \pi} {3})  \\
&=\mp  a_1 \kappa_1^* \cos ( \gamma^* +  \tfrac{\pi}{3})+ b_1 \kappa_1^* \cot ( \gamma^* + \tfrac{\pi}{3}) \cos( \gamma^* + \tfrac{ \pi} {3})  \,,\\[6pt]
\pm \partial_x u_1 &= \pm a_1 \kappa_1^* \cos ( \gamma^* +  \tfrac{\pi}{3})+ b_1 \kappa_1^* \sin( \gamma^* + \tfrac{\pi}{3})\,. 
\end{align*}
Therefore  
$$
q_1^* u_1 \pm \partial_x u_1 = b_1 \frac{\kappa_1^*}{\sin( \gamma^* + \tfrac{\pi}{3})}
 \quad \text{ at } x = \pm l^*_1 \,.
$$
Similarly we get
\begin{align*}
q_2^* u_2 \pm \partial_x u_2 = b_2 \frac{\kappa_2^*}{\sin( \gamma^* - \tfrac{\pi}{3})}
 \quad \text{ at } x = \pm l^*_2 \,,\\
q_3^* u_3 \pm \partial_x u_3 = b_3 \frac{\kappa_3^*}{\sin( \gamma^* - \pi)}
 \quad \text{ at } x = \pm l^*_3 \,.
\end{align*}
Thus we conclude 
\begin{align*}
 b_1 \frac{\kappa_1^*}{\sin( \gamma^* + \tfrac{\pi}{3})}
= b_2 \frac{\kappa_2^*}{\sin( \gamma^* - \tfrac{\pi}{3})} = b_3 \frac{\kappa_3^*}{\sin( \gamma^* -\pi)}
\,.
\end{align*}
Furthermore, $u_1 (\pm l^*_1) + u_2 (\pm l^*_2) + u_3 (\pm l^*_3) = 0 $ reads as 
\begin{align*}
\mp a_1 \sin( \gamma^* + \frac{\pi}{3}) &+ b_1 \cos ( \gamma^* + \frac{\pi}{3})  \\
&{}\mp a_2 \sin( \gamma^* - \frac{\pi}{3}) + b_2 \cos ( \gamma^* - \frac{\pi}{3})  \\
&\hspace{2.7cm}\mp a_3 \sin( \gamma^* - \pi) + b_3 \cos ( \gamma^* - \pi) 
=0 \,. 
\end{align*}
Altogether, we have to find solutions to the following system
\begin{equation*}
\left\{
\begin{aligned}
&a_1 \sin( \gamma^* + \frac{\pi}{3}) + a_2 \sin( \gamma^* - \frac{\pi}{3})+  a_3 \sin( \gamma^* - \pi) = 0  \,, \\
&b_1 \cos ( \gamma^* + \frac{\pi}{3}) + 
b_2 \cos ( \gamma^* - \frac{\pi}{3}) +
b_3 \cos ( \gamma^* - \pi)  = 0 \,,\\
&b_1 \frac{\kappa_1^*}{\sin( \gamma^* + \tfrac{\pi}{3})}
 = b_2 \frac{\kappa_2^*}{\sin( \gamma^* - \tfrac{\pi}{3})} = b_3 \frac{\kappa_3^*}{\sin( \gamma^* -\pi)} \,.
\end{aligned}
\right.
\end{equation*}
 Due to the identities \eqref{Eq: D useful one} and  \eqref{cos and sin} we  get 
\begin{equation*}
\left\{ 
\begin{aligned}
&a_1 \sin( \gamma^* + \frac{\pi}{3}) + a_2 \sin( \gamma^* - \frac{\pi}{3})+  a_3 \sin( \gamma^* - \pi) = 0 \,, \\
&b_1 = b_2 = b_3 \,.  \\
\end{aligned}
\right.
\end{equation*}
Therefore, in view of the formula \eqref{cos and sin}, we obtain 
$$
(a_1, a_2, a_3) \in \mathrm{span}\Big\{ (1,1,1), \big(0, \,-\,  \tfrac{ \sin(\gamma^* - \pi)}{ \sin(\gamma^*-  \frac{\pi}{3})},\, 1\big) \Big \}, \quad (b_1, b_2, b_3 ) \in \mathrm{span} \{(1,1,1) \}\,.
$$
This shows the following lemma:
\begin{lem} \label{Lem: Jacobi vector}
Assume $\kappa_2^* \neq 0$. Then the space of Jacobi functions is a three dimensional  vector space whose  basis  consists of  
\begin{equation*}
v^{(1)} =
 \begin{pmatrix}
    \cos (\kappa_1^* x )\\[4pt]
    \cos (\kappa_2^* x )\\[4pt]
    \cos (\kappa_3^* x )
  \end{pmatrix}, \quad
v^{(2)} 
   =\begin{pmatrix}
    \sin (\kappa_1^* x )\\[4pt]
    \sin (\kappa_2^* x )\\[4pt]
    \sin (\kappa_3^* x )
  \end{pmatrix}, \quad
v^{(3)} 
  =\begin{pmatrix}
    0\\[4pt]
     \frac{ \sin(\gamma^*)}{ \sin(\gamma^*-  \frac{\pi}{3})}\sin (\kappa_2^* x )\\[4pt]
\sin (\kappa_3^* x )
  \end{pmatrix}\,.
\end{equation*} 
\end{lem}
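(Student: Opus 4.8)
The plan is to identify $\mathcal J(\Gamma^*)$ with the solution space of an explicit linear ODE system and then read off a basis. By definition a Jacobi function is an element $u \in N(A_0)$ for which the constant vector $c$ in \eqref{Eq: linear ci} vanishes, so each component solves the homogeneous equation $\partial_x^2 u_i + (\kappa_i^*)^2 u_i = 0$ on $[-l_i^*, l_i^*]$. Since $\kappa_1^* < 0$ and $\kappa_3^* > 0$ always hold, and $\kappa_2^* \neq 0$ by hypothesis, every $\kappa_i^*$ is nonzero, so the general solution is $u_i(x) = a_i \sin(\kappa_i^* x) + b_i \cos(\kappa_i^* x)$ as in \eqref{Eq: DB general solution}. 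This leaves six unknown constants $a_i, b_i$, to be constrained by the boundary conditions \eqref{Eq: linear B ci} evaluated at the two triple junctions $x = \pm l_i^*$.

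First I would evaluate the conormal-flux conditions $q_i^* u_i \pm \partial_x u_i$ at $x = \pm l_i^*$. Using the explicit cotangent form of $q_i^*$ together with the relations $\kappa_1^* l_1^* = -(\gamma^* + \tfrac{\pi}{3})$ and its analogues that fix the arc lengths, a short trigonometric simplification cancels the $a_i$-contributions and yields $q_i^* u_i \pm \partial_x u_i = b_i \kappa_i^*/\sin(\cdot)$ at both endpoints. The matching requirement across $i$ then reads $b_1 \kappa_1^*/\sin(\gamma^* + \tfrac{\pi}{3}) = b_2 \kappa_2^*/\sin(\gamma^* - \tfrac{\pi}{3}) = b_3 \kappa_3^*/\sin(\gamma^* - \pi)$, which by the law-of-sines identity \eqref{Eq: D useful one} reduces precisely to $b_1 = b_2 = b_3$. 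Next I would impose the contact condition $u_1 + u_2 + u_3 = 0$ at $x = \pm l_i^*$; adding and subtracting the equations at the two endpoints separates them (by parity of sine and cosine) into a cosine-sum condition and a sine-sum condition. The cosine-sum is automatically satisfied once $b_1 = b_2 = b_3$, by the identity \eqref{cos and sin}, while the sine-sum leaves the single relation $a_1 \sin(\gamma^* + \tfrac{\pi}{3}) + a_2 \sin(\gamma^* - \tfrac{\pi}{3}) + a_3 \sin(\gamma^* - \pi) = 0$.

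It then remains to count dimensions and exhibit the basis. The $b$-constraint leaves the one-parameter family $(b_1, b_2, b_3) \in \mathrm{span}\{(1,1,1)\}$, and the single relation on $(a_1, a_2, a_3)$ leaves a two-parameter family, so $\dim \mathcal J(\Gamma^*) = 3$. Taking the cosine part with $(b_1,b_2,b_3) = (1,1,1)$ and vanishing $a$-part gives $v^{(1)}$ (its sine-sum relation holds trivially), while the generators $(1,1,1)$ and $(0, \tfrac{\sin\gamma^*}{\sin(\gamma^*-\pi/3)}, 1)$ of the solution space of the sine-sum relation, using $\sin(\gamma^* - \pi) = -\sin\gamma^*$, produce $v^{(2)}$ and $v^{(3)}$. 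Linear independence is immediate, as $v^{(1)}$ is the only vector carrying a cosine and $v^{(2)}, v^{(3)}$ are visibly independent as $a$-vectors.

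I expect the main obstacle to be the first step: evaluating the conormal-flux boundary operator at both triple junctions with the correct $\pm$ signs, and carrying out the simplification that makes the $a_i$-terms cancel so that only a clean condition on the $b_i$ survives. Once that is in place, the collapse to $b_1 = b_2 = b_3$ via \eqref{Eq: D useful one}, the vanishing of the cosine-sum via \eqref{cos and sin}, and the final dimension count are routine.
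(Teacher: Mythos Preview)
Your proposal is correct and follows essentially the same approach as the paper: write the general solution of the homogeneous ODE, evaluate the conormal-flux condition to see the $a_i$-terms cancel and obtain the equal-ratio condition on the $b_i$, collapse it to $b_1=b_2=b_3$ via \eqref{Eq: D useful one}, use \eqref{cos and sin} to dispose of the cosine-sum, and read off the single linear constraint on $(a_1,a_2,a_3)$. The only cosmetic difference is that you invoke parity of sine/cosine to separate the $\pm l_i^*$ conditions, whereas the paper writes out both and combines them directly.
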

We now consider the case $\kappa_2^* = 0$. The general solution of the  linearized problem is then
\begin{align*}
u_1 &= a_1 \sin (\kappa_1^* x) + b_1 \cos (\kappa_1^* x)\,,\quad  u_2 = a_2 x + b_2 \,,\\
u_3 &= a_3 \sin (\kappa_3^* x) + b_3 \cos ( \kappa_3^* x)\big(= - a_3 \sin (\kappa_1^* x) + b_3 \cos ( \kappa_1^* x) \big) \,,
\end{align*}
where we used the fact that  $ \kappa_3^* = - \kappa^*_1$ in case $\gamma^* = \frac{\pi}{3}$. 
Let us  also remind that for $\gamma^* = \frac{\pi}{3}$ we have
$$
q^*_2 = \frac{\kappa_1^*}{\sin(\frac{\pi}{3})} \quad \text{ and } \quad  l_2^* = -\frac{\sin(\frac{\pi}{3})}{\kappa^*_1}
$$ and  so $q^*_2 \, l^*_2 = -1$.
Therefore, 
\begin{align*}
q^*_2 u_2 \pm \partial_x u_2 &= \mp a_2 + \frac{\kappa^*_1}{\sin(\frac{\pi}{3})}b_2 \pm a_2 =  \frac{\kappa_1^*}{\sin(\frac{\pi}{3})}b_2 & & \text{at } x= \pm l^*_2 \,. 
\end{align*}
Taking into account the calculation done previously for $u_1$ and $u_3$,
 the condition  $q_1^* u_1 \pm \partial_x u_1 = q_2^* u_2 \pm \partial_x u_2 = q_3^* u_3 \pm  \partial_{x} u_3$ reads as
$$
b_1 \frac{\kappa_1^*}{\sin( \tfrac{\pi}{3})}
= b_2\frac{\kappa_1^*}{\sin( \tfrac{\pi}{3})}= b_3 \frac{\kappa_3^*}{\sin(- \frac{2 \pi}{3})} \Big(= b_3 \frac{-\kappa_1^*}{-\sin( \tfrac{\pi}{3})} =b_3 \frac{\kappa_1^*}{\sin( \tfrac{\pi}{3})}
 \Big)\,.
$$
Therefore, we conclude $b_1 = b_2 = b_3$.
Furthermore, $u_1 (\pm l^*_1) + u_2 (\pm l^*_2) + u_3 (\pm l^*_3) = 0 $ reads as 
\begin{align*}
\mp a_1 \sin( \tfrac{\pi}{3}) &+ b_1 \cos ( \tfrac{2\pi}{3})\mp a_2\frac{\sin(\frac{\pi}{3})}{\kappa^*_1}+ b_2 \pm a_3 \sin( \tfrac{\pi}{3}) + b_3 \cos ( \tfrac{2\pi}{3}) =0 \,. 
\end{align*}
Moreover, using the facts that $b_1 = b_2 = b_3$ and $\cos (\frac{2 \pi}{3}) = -\frac{1}{2}$, we see
that$$
b_1 \cos ( \tfrac{2\pi}{3}) + b_2 + b_3 \cos ( \tfrac{2\pi}{3})=0 \,. 
$$
In summary, we have to find solutions to the following system
\begin{equation*}
\left\{
\begin{aligned}
&a_1  + \frac{a_2}{\kappa^*_1}-a_3  = 0  \,, \\
&b_1  = b_2= b_3.
\end{aligned}
\right.
\end{equation*}
Therefore,
$$
(a_1, a_2, a_3) \in \mathrm{span}\big\{ (1, 0 , 1), (0, \kappa_1^*, 1) \big \}, \quad (b_1, b_2, b_3 ) \in \mathrm{span} \{(1,1,1) \}\,.
$$

\begin{lem}
Assume $\kappa_2^* = 0$. Then the space of Jacobi functions is $3$-dimensional and its basis is given by  
\begin{equation*}
v^{(1)} =
 \begin{pmatrix}
    \cos (\kappa_1^* x )\\[4pt]
    1                   \\[4pt]
    \cos (\kappa_1^* x )
  \end{pmatrix}, \quad
v^{(2)} 
   =\begin{pmatrix}
    \sin (\kappa_1^* x )\\[4pt]
    0                   \\[4pt]
    -\sin (\kappa_1^* x )
  \end{pmatrix}, \quad
v^{(3)} 
  =\begin{pmatrix}
    0\\[4pt]
    \kappa_1^* x\\[4pt]
-\sin (\kappa_1^* x )
  \end{pmatrix}.
\end{equation*} 
\end{lem}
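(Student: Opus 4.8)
The plan is to follow exactly the scheme already used for the case $\kappa_2^* \neq 0$ in Lemma \ref{Lem: Jacobi vector}, now specialized to $\gamma^* = \tfrac{\pi}{3}$. By definition the Jacobi functions are those $u \in N(A_0)$ for which the constant vector $c(u)$ vanishes, so they are precisely the solutions of the \emph{homogeneous} system $\partial_x^2 u_i + (\kappa_i^*)^2 u_i = 0$ on each interval $[-l_i^*, l_i^*]$ subject to the contact and flux-matching boundary conditions of \eqref{Eq: linear B ci}. Since these computations are already carried out in the lines immediately preceding the statement, the proof amounts to collecting them and reading off a basis.

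First I would record the general solution of the three decoupled second-order equations. Because $\kappa_2^* = 0$ the middle equation degenerates to $\partial_x^2 u_2 = 0$, so $u_2 = a_2 x + b_2$ is affine-linear, whereas $u_1$ and $u_3$ remain trigonometric; using $\kappa_3^* = -\kappa_1^*$ (valid for $\gamma^* = \tfrac{\pi}{3}$) I would write $u_3 = -a_3 \sin(\kappa_1^* x) + b_3 \cos(\kappa_1^* x)$. Substituting into the flux-matching condition and invoking the special relations $q_2^* = \kappa_1^*/\sin(\tfrac{\pi}{3})$ and $l_2^* = -\sin(\tfrac{\pi}{3})/\kappa_1^*$, hence $q_2^* l_2^* = -1$, the linear contribution $\mp a_2 \pm a_2$ cancels and one is left with $b_1 = b_2 = b_3$. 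The contact condition $u_1(\pm l_1^*) + u_2(\pm l_2^*) + u_3(\pm l_3^*) = 0$ then splits into symmetric and antisymmetric parts: the cosine$/b$-terms cancel automatically because $\cos(\tfrac{2\pi}{3}) = -\tfrac12$, leaving the single scalar relation $a_1 + a_2/\kappa_1^* - a_3 = 0$.

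From here the counting is immediate: the admissible triples $(b_1,b_2,b_3)$ fill the line $\mathrm{span}\{(1,1,1)\}$ and the admissible $(a_1,a_2,a_3)$ fill the $2$-plane cut out by $a_1 + a_2/\kappa_1^* - a_3 = 0$, so the Jacobi space has dimension $1 + 2 = 3$. Finally I would exhibit the basis by feeding the three natural generators into the general solution: $(a)=0$ with $(b)=(1,1,1)$ gives $v^{(1)}$, while $(b)=0$ with the two $a$-generators $(1,0,1)$ and $(0,\kappa_1^*,1)$ give $v^{(2)}$ and $v^{(3)}$ respectively, matching the stated functions.

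There is no genuine obstacle, since the heavy lifting sits in the preceding paragraph; the only point demanding care is the degenerate geometry at $\gamma^* = \tfrac{\pi}{3}$, where $\Gamma_2^*$ is a straight segment. One must honor this by keeping the affine ansatz $u_2 = a_2 x + b_2$ and by using $q_2^* l_2^* = -1$ together with $\kappa_3^* = -\kappa_1^*$, rather than transcribing the trigonometric case verbatim; these are exactly the substitutions that make the boundary conditions collapse to the clean system above.
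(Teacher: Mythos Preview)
Your proposal is correct and follows essentially the same approach as the paper: the paper's proof is precisely the computation carried out in the paragraph immediately preceding the lemma, and you have accurately summarized those steps (the affine ansatz for $u_2$, the cancellation via $q_2^* l_2^* = -1$ yielding $b_1=b_2=b_3$, and the single relation $a_1 + a_2/\kappa_1^* - a_3 = 0$ from the contact condition).
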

\subsubsection{ The null space $N(A_0)$ is at most five-dimensional}
Next we try to get an upper bound on the dimension of the null space.
\begin{lem} \label{Lem: DB null < 5}
The null space  $N(A_0)$ of the linearized operator $A_0$ is at most five-dimensional.
\end{lem}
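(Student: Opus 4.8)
The plan is to apply the rank--nullity theorem to the linear map that records the constant right-hand sides $c_i$, combined with the dimension of the space of Jacobi functions that has just been computed in the two preceding lemmas.

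First I would introduce the linear map $\Phi \colon N(A_0) \to \mR^3$ defined by $\Phi(u) := c(u) = (c_1, c_2, c_3)$, where the $c_i$ are the constants appearing in \eqref{Eq: linear ci}. This map is well defined and linear: for $u \in N(A_0)$ the characterization \eqref{Eq: linear ci} guarantees that $\Delta_{\Gamma_i^*} u_i + (\kappa_i^*)^2 u_i$ is constant on each arc $\Gamma_i^*$, and, as already remarked after \eqref{Eq: linear B ci}, this constant vector depends linearly on $u$.

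Next I would identify the kernel and bound the image. By the very definition of the space of Jacobi functions,
$$
\ker \Phi = \big\{ u \in N(A_0) : c(u) = 0 \big\} = \mathcal J ( \Gamma^* ),
$$
which the two preceding lemmas show is exactly three-dimensional, covering both cases $\kappa_2^* \neq 0$ and $\kappa_2^* = 0$. For the image, the third condition in \eqref{Eq: linear B ci} forces $c_1 + c_2 + c_3 = 0$, so that $R(\Phi)$ is contained in the hyperplane $\{ c \in \mR^3 : c_1 + c_2 + c_3 = 0 \}$, whence $\dim R(\Phi) \leq 2$.

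Finally, the rank--nullity theorem yields
$$
\dim N(A_0) = \dim \ker \Phi + \dim R(\Phi) \leq 3 + 2 = 5,
$$
which is the assertion. I do not anticipate any genuine obstacle here: the only point requiring a little care is checking that the $c_i$ really define linear functionals on $N(A_0)$, and this is immediate from \eqref{Eq: linear ci}. All the substantive work---the explicit verification that $\dim \mathcal J(\Gamma^*) = 3$---was already carried out in the two lemmas determining the Jacobi functions, so the present lemma is essentially a bookkeeping consequence of those computations together with the single scalar constraint $\sum_i c_i = 0$.
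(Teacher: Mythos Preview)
Your proposal is correct and follows essentially the same approach as the paper: both arguments use the linear map $u \mapsto c(u)$, identify its kernel as the three-dimensional space of Jacobi functions, and bound its image by the constraint $c_1+c_2+c_3=0$. The paper carries this out by hand (showing that any three elements with $c\neq 0$ together with the Jacobi basis are linearly dependent), whereas you invoke the rank--nullity theorem directly; the content is identical.
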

\begin{proof}
We have already shown that the space of Jacobi functions is three-dimensional. Therefore it is enough to show that there exist at most  two independent vectors in the null space $N(A_0)$ for which $ c \neq 0$. 

Take any three vector functions  $u^{(1)}, u^{(2)}, u^{(3)}\in N(A_0)$ for which the vector constants $ c^{(i)} =  {c^{(i)}} (u^{(i)}) \neq 0$, $i = 1,2,3$. Then as 
$$
 c^{(i)}  \in \big\{ \,  c = (c_1,c_2,c_3)\in \mR^3 : c_1+c_2+ c_3 = 0 \big \} 
$$ which is a two dimensional subspace of $\mR^3$, there exist scalars $a_1,a_2,a_3$, not all zero, such that
$$
0 = \sum_{j= 1}^3 a_i  {c^{(i)}} = \sum_i^3 a_i T u^{(i)}= T (\sum_{i=1}^3 a_iu^{(i)}) \,. 
$$  
Here $T$ is the linear  operator defined by the left hand side of \eqref{Eq: linear ci}. Thus we get $\sum_{i=1}^3 a_iu^{(i)} \in \mathcal J(\Gamma^*)$, in other words,
$$
 \sum_{i=1}^3 a_iu^{(i)} = \sum_{j=1}^3 b_j v^{(j)},
$$ 
where $\{ v^{(1)}, v^{(2)}, v^{(3)} \}$ is a basis of  $J(\Gamma^*)$. This means that the vectors 
$$
u^{(1)}, u^{(2)}, u^{(3)}, v^{(1)}, v^{(2)}, v^{(3)}
$$
are linearly dependent, and this completes the proof.
\end{proof} 
 Indeed, we will prove in Corollary \ref{cor: null and manifold} below that the dimension of the null space is exactly five.
\subsection{Manifold of equilibria} \label{Sec: D manifold} 
Our goal  in this section is to prove that near $\rho\equiv 0$, which corresponds to $\Gamma^*$, the set $\mathcal E$ of equilibria  of  the nonlinear system \eqref{Eq: D nonlinear semifinal}   creates a smooth  manifold of dimension $5$.
\subsubsection{Equilibria  of  the nonlinear system
} 
Let us first identify the set of equilibria $\mathcal E$ of  the nonlinear system \eqref{Eq: D nonlinear semifinal}. According to \eqref{set of equilibria E},  $\rho \in \mathcal E$ if and only if  for $i = 1, 2, 3 $ and $j = 1, 2, . . . , 6$, 
\begin{equation*}  
     \left \{
   \begin{aligned}
   \rho &\in B_{X_1}(0,R)\,, \\
     0 &= \mathfrak{F}_i( \rho_i ,  \rho|_{\Sigma^*})   \\&  \quad + \mathfrak{B}_i ( \rho_i ,   \rho|_{\Sigma^*}) \Big(  \Big \{ \mathcal J  \big( I - \mathfrak{B} ( \rho ,\rho|_{\Sigma^*}) \mathcal J \big)^{-1} \mathfrak{F}( \rho ,  \rho|_{\Sigma^*})  \Big \} \circ \mathrm{pr}_i \Big)_i  & &\text{ on } \Gamma^*_i \,, \\
 0 &= \mathfrak G_j ( \rho )   & &\text{ on } \Sigma^*. \\
   \end{aligned}
\right.
\end{equation*}
 Similarly as done in Section \ref{Sec: DB nonlocal/linear}, we can write the first three equations as a vector identity on $\Sigma^*$ and thereby obtain $\mathfrak{F}( \rho ,  \rho|_{\Sigma^*}) = 0$. Thus
\begin{equation*}
   \rho \in \mathcal E \Leftrightarrow
   \left \{
   \begin{aligned}
 & \rho   \in B_{X_1}(0,R)\,,\\
      & 0 = \mathfrak{F}_i( \rho_i ,  \rho|_{\Sigma^*})& &\text {on } \Gamma_i^* \,, & &i = 1,2,3 \,, \\
      & 0 = \mathfrak G_j ( \rho )   & &\text{on } \Sigma^* \,, & &j = 1,\dots,6 \,.\\
      \end{aligned}
\right.
\end{equation*}
Taking into account \eqref{b_i}, the definition of $\mathfrak F_i$, the balance of flux conditions $\mathfrak G_5, \mathfrak G_6 $ and the condition on curvature $\mathfrak G_4$, by applying the Gauss
theorem, we see 
\begin{equation*}
   \rho \in \mathcal E \Leftrightarrow
   \left \{
   \begin{aligned}
      &\rho \in B_{X_1}(0,R)\,, \\
      & \kappa_{i} \big( \rho_i, (\mathcal{J}\rho \circ \mathrm{pr})_i \big) \text{ are constant, }  & &\text {on } \Gamma^* \,, \\
      & \mathfrak G_j ( \rho ) = 0  & &\text{on } \Sigma^* \,, & &j = 1,2,3,4\,.\\
   \end{aligned}
\right.
\end{equation*}
Therefore, using Lemma \ref{Con:contact}, we conclude: 
\begin{align*}
 \mathcal E = \Big\{ \,\rho &\in B_{X_1}(0,R) : \rho \text{ parameterizes a standard planar double bubble} \, \Big\}.
\end{align*}

\subsubsection{Level set  representation of standard  double bubbles}
Next we represent   standard planar double bubbles  as  a subset of  the zero level sets of some smooth functions. Let  $S_{r_i}(O_i)$, $i= 1,2,3$, be the corresponding circles to standard planar double bubble $\Gamma = \{ \, \Gamma_1, \Gamma_2, \Gamma_3 \, \}$. In other words, $\Gamma_i \subset S_{r_i}(O_i)$, where $r_i$ and $O_i$ are the radius and  the center of $\Gamma_i$ respectively. 
\begin{lem}\label{Rep: SPDB} 
Let $\Gamma = DB_{r, \gamma, 0}(0, 0)$. Then 
$$
 \big \{ \, \sigma \in \mR^2 : G_i(\sigma, r, \gamma) = 0 \, \big \}= S_{r_i}(O_i)\supset \Gamma_i \qquad (i=1,2,3),
$$
where 
$ 
G_i: \mR^2\times (0, \infty) \times (0, \tfrac{2 \pi}{3}) \to \mR 
$
are smooth functions defined by   
\begin{align*}
r\sin ( \gamma + \tfrac{ \pi}{3})G_1(\sigma, r, \gamma) &= \tfrac{1}{2}   \sin(\gamma + \tfrac{\pi}{3}) \Big( |\sigma|^2 - r^2 \Big)\,, \\
r\sin ( \gamma + \tfrac{ \pi}{3}) G_2(\sigma, r, \gamma) &= \tfrac{1}{2} \Big( \sin( \gamma - \tfrac{\pi}{3})|\sigma|^2 -  2 r\sin( \tfrac{\pi}{3}) \big\langle \sigma, (1, 0) \big \rangle - r^2\sin (\gamma- \pi)  \Big)\,,\\ r\sin ( \gamma + \tfrac{ \pi}{3})
 G_3(\sigma , r,  \gamma) &= \tfrac{1}{2} \Big(\sin (\gamma - \pi) |\sigma|^2+ 2 r\sin( \tfrac{\pi}{3}) \big \langle \sigma, (1, 0) \big \rangle -r^{2} \sin( \gamma - \tfrac{\pi}{3})   \Big)\,,  
\end{align*}
 with the property that 
\begin{equation} \label{Id: sum to zero}
G_1 + G_2+ G_3 = 0 \,. 
\end{equation}
\end{lem}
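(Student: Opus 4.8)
The plan is to treat the two assertions of the lemma separately. The pointwise identity \eqref{Id: sum to zero} is a direct computation that uses only the trigonometric relations \eqref{cos and sin}, while the level-set description $\{G_i=0\}=S_{r_i}(O_i)$ is where the double-bubble geometry enters.

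For \eqref{Id: sum to zero} I would clear the common factor $r\sin(\gamma+\tfrac{\pi}{3})$, which is strictly positive since $0<\gamma<\tfrac{2\pi}{3}$ forces $\tfrac{\pi}{3}<\gamma+\tfrac{\pi}{3}<\pi$ and $r>0$. Adding the three right-hand sides, the coefficient of $|\sigma|^2$ and the constant term each equal $\tfrac12$ times $\sin(\gamma+\tfrac{\pi}{3})+\sin(\gamma-\tfrac{\pi}{3})+\sin(\gamma-\pi)$, which vanishes by the first line of \eqref{cos and sin}, while the coefficient of $\langle\sigma,(1,0)\rangle$ is $-r\sin(\tfrac{\pi}{3})+r\sin(\tfrac{\pi}{3})=0$. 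Hence $r\sin(\gamma+\tfrac{\pi}{3})(G_1+G_2+G_3)\equiv 0$, and dividing by the positive factor gives the claim.

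For the level-set claim I would handle $G_1$ first: since $\tfrac12\sin(\gamma+\tfrac{\pi}{3})\neq 0$, the equation $G_1=0$ is equivalent to $|\sigma|^2=r^2$, the circle of radius $r$ centred at the origin, which is by construction $S_{r_1}(O_1)$ with $O_1=(0,0)$, $r_1=r$, containing $\Gamma_1$. For $G_2$ and $G_3$ each defining equation is a real quadratic whose leading part is a nonzero multiple of $|\sigma|^2$ exactly when the relevant curvature is nonzero, i.e. when $\sin(\gamma-\tfrac{\pi}{3})\neq 0$ (equivalently $\gamma\neq\tfrac{\pi}{3}$) for $G_2$, and always for $G_3$ since $\sin(\gamma-\pi)<0$ on $(0,\tfrac{2\pi}{3})$. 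In the nondegenerate case I would divide by the leading coefficient and complete the square, exhibiting $\{G_i=0\}$ as a circle with an explicit centre on the $x$-axis and explicit radius; in the remaining case $\gamma=\tfrac{\pi}{3}$ the quadratic part of $G_2$ drops out and $G_2=0$ becomes the line $\sigma_1=\tfrac{r}{2}$, which matches the line segment $\Gamma_2$ (the degenerate circular arc). It then remains only to identify this centre and radius with $O_i$ and $r_i=1/|\kappa_i|$.

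The radius match reduces, for $G_2$, to the identity $\sin^2(\tfrac{\pi}{3})+\sin(\gamma-\pi)\sin(\gamma-\tfrac{\pi}{3})=\sin^2(\gamma+\tfrac{\pi}{3})$, after using the law of sines \eqref{Eq: D useful one} to write $\kappa_2$ (hence $r_2$) in terms of $r,\gamma$; this and its $G_3$-analogue follow from the product-to-sum formulas. I expect the centre to be the main obstacle: a circle of prescribed radius through the two triple junctions is not unique, its mirror image across the chord $p_+p_-$ also qualifying, so one must locate $O_i$ from the geometry of $DB_{r,\gamma,0}(0,0)$ — using that the triple junctions $p_\pm$ lie symmetrically about the $x$-axis when $\theta=0$ and that the $120^\circ$ angle condition fixes the radial directions $O_ip_\pm$ — and then check that the completed-square centre sits on the correct side. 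Equivalently, I would confirm that both $p_+$ and $p_-$ satisfy $G_i=0$ and that the sign of the $x$-coordinate of the centre agrees with the sign of $\kappa_i$ recorded after \eqref{Eq: D useful one}. Once centre and radius are matched, $\{G_i=0\}=S_{r_i}(O_i)\supset\Gamma_i$ follows, completing the proof.
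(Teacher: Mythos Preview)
Your proposal is correct and follows essentially the same computation as the paper, only run in the reverse direction: the paper first writes down $O_i$ and $r_i$ explicitly from the law of sines applied to the triangle of centres, then expands $|\sigma-O_i|^2-r_i^2$ and simplifies (via $\sin^2 x-\sin^2 y=\sin(x+y)\sin(x-y)$) to a nonzero multiple of the stated $G_i$, while \eqref{Id: sum to zero} is handled exactly as you do using \eqref{cos and sin}. Because the paper starts from the geometric $O_i$ rather than recovering it, your mirror-image concern never arises --- and in fact it would not arise in your direction either, since completing the square in $G_i$ determines the centre uniquely.
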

The proof is given in Appendix \ref{App. Rep: SPDB}. Next let us  look at the gradient of $G_i$. 
\begin{lem} \label{Cor: gradient G}
Let  $\Gamma = DB_{r, \gamma, 0}(0, 0)$. Then
$$
\nabla_\sigma G_i (\sigma,r,\gamma) = n_i( \sigma)  \qquad  \text{ for }\sigma \in \Gamma_i \,\,\, (i =1,2,3).
$$
\end{lem}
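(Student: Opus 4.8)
The plan is to verify by direct computation that the gradient $\nabla_\sigma G_i(\sigma, r, \gamma)$, when restricted to points $\sigma \in \Gamma_i$, equals the chosen unit normal $n_i(\sigma)$. First I would recall from Lemma~\ref{Rep: SPDB} that each level set $\{G_i = 0\}$ coincides with the circle $S_{r_i}(O_i)$ containing $\Gamma_i$. Since $G_i$ is (up to the positive normalizing factor $r\sin(\gamma + \tfrac{\pi}{3})$) a quadratic polynomial of the form $\tfrac{1}{2}\alpha_i|\sigma|^2 + \langle \sigma, \beta_i\rangle + c_i$, its gradient is immediate: $\nabla_\sigma G_i(\sigma, r, \gamma) = \tfrac{1}{r\sin(\gamma+\frac\pi3)}\big(\alpha_i \sigma + \beta_i\big)$, where $\alpha_i$ and $\beta_i$ are read off directly from the explicit formulas for $G_1, G_2, G_3$.

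Next I would identify the center $O_i$ and radius $r_i$ of each circle from the completed-square form of $G_i$. Writing $G_i = \tfrac{\alpha_i}{2r\sin(\gamma+\frac\pi3)}\big(|\sigma - O_i|^2 - r_i^2\big)$ (for $\alpha_i \neq 0$) shows $O_i = -\beta_i/\alpha_i$, and then $\nabla_\sigma G_i = \tfrac{\alpha_i}{r\sin(\gamma+\frac\pi3)}(\sigma - O_i)$. For a point $\sigma \in \Gamma_i \subset S_{r_i}(O_i)$ the vector $\sigma - O_i$ is radial with $|\sigma - O_i| = r_i$, hence $\nabla_\sigma G_i$ is parallel to the outward radial direction $(\sigma - O_i)/r_i$. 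The remaining task is to check that its magnitude equals $1$ and that its sign agrees with the normal orientation fixed in Figure~\ref{Fig.normals}. The magnitude computation reduces, via the law of sines~\eqref{Eq: D useful one} relating the $\kappa_i = \pm 1/r_i$ to the angles $\gamma \pm \tfrac{\pi}{3}, \gamma - \pi$, to the identity $|\alpha_i| r_i = r\sin(\gamma + \tfrac\pi3)$; I would verify this case by case for $i = 1, 2, 3$ using the coefficients above. The sign is settled by recalling the curvature-sign conventions stated after~\eqref{Eq: D useful one}, namely $\kappa_1 < 0$, $\kappa_3 > 0$, and the sign of $\kappa_2$ governed by whether $\gamma \lessgtr \tfrac\pi3$, so that in each case $\nabla_\sigma G_i$ points along $n_i$ rather than $-n_i$.

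The case $\alpha_2 = 0$, occurring precisely when $\gamma = \tfrac\pi3$ so that $\sin(\gamma - \tfrac\pi3) = 0$ and $\Gamma_2$ is a straight line segment, must be handled separately: there $G_2$ is affine in $\sigma$, its gradient is the constant vector $\tfrac{1}{r\sin(\gamma+\frac\pi3)}\beta_2$, and one checks directly that this is the unit normal to the line $\Gamma_2$ with the correct orientation. I expect the main obstacle to be bookkeeping rather than conceptual: keeping the trigonometric normalization consistent across the three arcs and confirming the orientation matches the prescribed normals, especially reconciling the sign conventions with the degenerate line case. The identity~\eqref{Id: sum to zero}, $G_1 + G_2 + G_3 = 0$, provides a useful consistency check, since differentiating it gives $\sum_i \nabla_\sigma G_i = 0$, which at a triple junction must reproduce the equilibrium relation $\sum_i n_i = 0$ from the $120^\circ$ angle condition.
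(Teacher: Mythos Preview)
Your proposal is correct and follows essentially the same route as the paper: compute $\nabla_\sigma G_i$ directly from the explicit quadratic formulas, recognize it as a scalar multiple of the radial vector $\sigma - O_i$, and then normalize using the law-of-sines relation~\eqref{Eq: D useful one}. The paper streamlines your separate magnitude-and-sign checks into a single step by invoking the geometric identity $\sigma - O_i = -\tfrac{1}{\kappa_i}\, n_i(\sigma)$, which encodes both the unit length and the orientation convention at once; otherwise the arguments are the same.
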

\begin{proof}
It is easy to see that 
$$
\sigma - O_i = - \frac{1}{\kappa_i} n_i(\sigma) \qquad \text{ for } \sigma \in \Gamma_i  \quad (i = 1,2,3)\,. 
$$
 Using this, we  calculate
\begin{align*}
r\sin ( \gamma + \tfrac{ \pi}{3})\nabla_\sigma G_2 ( \sigma,r, \gamma) &= \sin(\gamma - \tfrac{\pi}{3})\sigma - r\sin( \tfrac{\pi}{3})(1, 0 ) 
\\
&=  \sin( \gamma - \tfrac{\pi}{3}) \big( \sigma - O_2 \big)\\
&=- \tfrac{ \sin ( \gamma - \frac{ \pi}{3})}{\kappa_2}  n_2 (\sigma) & &\text{for } \sigma \in \Gamma_2 \,. 
\end{align*}
Similarly we get 
\begin{align*}
 r\sin ( \gamma + \tfrac{ \pi}{3})\nabla_\sigma G_1 (\sigma, r,\gamma)
&=-  \tfrac{\sin( \gamma + \tfrac{\pi}{3})}{\kappa_1} n_1(\sigma)  & &\text{for } \sigma \in \Gamma_1. \\
r\sin ( \gamma + \tfrac{ \pi}{3})\nabla_\sigma G_3 (x, r,\gamma)
&= - \tfrac{ \sin ( \gamma - \pi)}{\kappa_3}  n_3 (\sigma) & &\text{for } \sigma \in \Gamma_3\,.
\end{align*}
Since $ r \sin( \gamma + \frac{\pi}{3})= - \frac{\sin( \gamma + \frac{\pi}{3})}{ \kappa_1}  $, by the identity \eqref{Eq: D useful one} we complete the proof.
\end{proof}
Furthermore, the following result holds. 
\begin{pr}\label{Prop: derivative G r} Let
 $\Gamma = DB_{r, \gamma, 0}(0, 0)$. Then
\begin{equation*}
\left \{ 
\begin{aligned}
\partial_r G_1( \sigma, r,\gamma) &=-1 &&\text{for } \sigma  \in \Gamma_1 ,  \\
\partial_r G_2( \sigma, r,\gamma) &= -\tfrac{1}{\sin ( \gamma + \frac{\pi}{3})} \Big( \tfrac{\sin (\tfrac{\pi}{3}) }{r} \sigma_1 + \sin(\gamma - \pi) \Big)  &&\text{for } \sigma  \in \Gamma_2, \\
\partial_r G_3( \sigma, r,\gamma) &= +\tfrac{1}{\sin ( \gamma + \frac{\pi}{3})} \Big( \tfrac{\sin (\tfrac{\pi}{3}) }{r} \sigma_1 - \sin(\gamma - \tfrac{\pi}{3}) \Big) &&\text{for } \sigma \in \Gamma_3.  
\end{aligned}
\right.
\end{equation*}
\begin{proof}
According to Lemma \ref{Rep: SPDB}, we have
$$
G_i(\sigma, r, \gamma) = 0 \qquad \text{ for } \sigma \in \Gamma_i \,.
$$
Therefore, differentiating with respect to  $r$ in the definitions of functions $G_i$, we observe  
\begin{align*}
-r\sin ( \gamma + \tfrac{ \pi}{3})\partial_r G_1( \sigma, r,\gamma)  &=  \sin ( \gamma + \tfrac{ \pi}{3})r && \text{for } \sigma \in \Gamma_1 , \\
-r\sin ( \gamma + \tfrac{ \pi}{3})\partial_r G_2(\sigma, r, \gamma)  &= \sin( \tfrac{\pi}{3}) \big\langle \sigma, (1, 0) \big \rangle +\sin (\gamma- \pi) r && \text{for } \sigma \in \Gamma_2 \,,\\
r\sin ( \gamma + \tfrac{ \pi}{3})\partial_r G_3(\sigma, r, \gamma)   &=   \sin( \tfrac{\pi}{3}) \big\langle \sigma, (1, 0) \big \rangle -\sin (\gamma - \tfrac{\pi}{3}) r&& \text{for } \sigma \in \Gamma_3\,,
\end{align*}
which finishes the proof.
\end{proof}
\end{pr}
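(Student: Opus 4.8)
The plan is to exploit the fact that the three defining identities for the $G_i$ in Lemma~\ref{Rep: SPDB} hold for \emph{all} values of the arguments $(\sigma,r,\gamma)$, and hence may be differentiated in $r$ directly, with the curve constraint $\sigma\in\Gamma_i$ imposed only afterwards. Setting $c:=\sin(\gamma+\tfrac{\pi}{3})$, which is independent of $r$, each relation has the form $rc\,G_i(\sigma,r,\gamma)=P_i(\sigma,r,\gamma)$ with $P_i$ an explicit quadratic polynomial in $(\sigma,r)$. Differentiating in $r$ by the product rule yields the identity
\[
c\,G_i + rc\,\partial_r G_i = \partial_r P_i \qquad (i=1,2,3),
\]
valid on the whole domain of $G_i$.

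The key step is then to evaluate this identity at a point $\sigma\in\Gamma_i$. By Lemma~\ref{Rep: SPDB} we have $G_i(\sigma,r,\gamma)=0$ precisely there, so the first term drops out and we are left with $rc\,\partial_r G_i=\partial_r P_i$. I would emphasise the logical ordering here, since it is the only point where care is genuinely needed: one cannot differentiate ``$G_i=0$'' directly, because for a fixed $\sigma$ this vanishes only at the single matching radius and not on an $r$-interval; the correct procedure is to differentiate the universally valid defining identity and \emph{only then} restrict to $\Gamma_i$.

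What remains is elementary: compute $\partial_r P_i$ and divide by $rc$. For $G_1$ one gets $\partial_r P_1=-cr$, so $\partial_r G_1=-1$. For the other two, writing $\sigma_1=\langle\sigma,(1,0)\rangle$, the terms linear and quadratic in $r$ produce $\partial_r P_2=-\sin(\tfrac{\pi}{3})\sigma_1-r\sin(\gamma-\pi)$ and $\partial_r P_3=\sin(\tfrac{\pi}{3})\sigma_1-r\sin(\gamma-\tfrac{\pi}{3})$; dividing by $rc$ and pulling out the factor $\tfrac{1}{c}=\tfrac{1}{\sin(\gamma+\pi/3)}$ gives exactly the stated formulas, with the opposite overall signs in the $G_2$ and $G_3$ cases reflecting the opposite signs of the $\pm2r\sin(\tfrac{\pi}{3})\sigma_1$ contributions in $P_2$ and $P_3$. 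There is no real obstacle beyond bookkeeping of these signs; the genuinely conceptual content is entirely contained in the ``differentiate, then restrict'' ordering flagged above.
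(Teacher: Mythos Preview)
Your proof is correct and follows essentially the same route as the paper: differentiate the defining identities $r\sin(\gamma+\tfrac{\pi}{3})\,G_i=P_i$ in $r$ via the product rule, then invoke $G_i(\sigma,r,\gamma)=0$ on $\Gamma_i$ from Lemma~\ref{Rep: SPDB} to kill the undifferentiated term, and read off the result. The only difference is expository --- you make the ``differentiate first, then restrict'' logic explicit, whereas the paper leaves this implicit.
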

Similarly we get
\begin{pr} \label{Prop: derivative gamma}
Let
 $\Gamma = DB_{r, \gamma, 0}(0, 0)$. Then
\begin{equation*}
\left \{ 
\begin{aligned}
\partial_{\gamma} G_1( \sigma, r,\gamma) &=0 &&\text{for } \sigma  \in \Gamma_1 ,  \\
\partial_\gamma G_2( \sigma, r,\gamma) &=\tfrac{1}{2r\sin ( \gamma + \frac{\pi}{3})}\Big(\cos( \gamma - \tfrac{\pi}{3})|\sigma|^2 - r^2\cos (\gamma- \pi)   \Big) &&\text{for } \sigma  \in \Gamma_2, \\
\partial_\gamma G_3( \sigma, r,\gamma) &= \tfrac{1}{2r\sin ( \gamma + \frac{\pi}{3})}\Big(\cos( \gamma - \pi)|\sigma|^2 - r^2\cos (\gamma-  \tfrac{\pi}{3})  \Big) &&\text{for } \sigma \in \Gamma_3.  
\end{aligned}
\right.
\end{equation*}
\end{pr}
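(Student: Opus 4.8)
The plan is to mimic the proof of Proposition \ref{Prop: derivative G r}, differentiating the defining relations of the $G_i$ from Lemma \ref{Rep: SPDB} with respect to $\gamma$ instead of $r$, and then exploiting the fact that each $G_i$ vanishes identically on $\Gamma_i$. Concretely, I would start from the identity $r\sin(\gamma+\tfrac{\pi}{3})\,G_i(\sigma,r,\gamma)=R_i(\sigma,r,\gamma)$, where $R_i$ denotes the right-hand side appearing in Lemma \ref{Rep: SPDB}. Applying $\partial_\gamma$ and the product rule gives
$$
r\cos(\gamma+\tfrac{\pi}{3})\,G_i + r\sin(\gamma+\tfrac{\pi}{3})\,\partial_\gamma G_i = \partial_\gamma R_i \,.
$$
For $\sigma\in\Gamma_i$ we have $G_i(\sigma,r,\gamma)=0$ by Lemma \ref{Rep: SPDB}, so the first term on the left drops out and one is left with
$$
r\sin(\gamma+\tfrac{\pi}{3})\,\partial_\gamma G_i = \partial_\gamma R_i \qquad \text{on }\Gamma_i \,.
$$

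It then remains only to compute $\partial_\gamma R_i$. For $i=2,3$ the key simplification is that the middle terms $\mp\, 2r\sin(\tfrac{\pi}{3})\langle\sigma,(1,0)\rangle$ in $R_2$ and $R_3$ are independent of $\gamma$ and hence disappear upon differentiation; only the trigonometric coefficients of $|\sigma|^2$ and $r^2$ survive. This yields
$$
\partial_\gamma R_2 = \tfrac{1}{2}\big(\cos(\gamma-\tfrac{\pi}{3})|\sigma|^2 - r^2\cos(\gamma-\pi)\big)\,,
$$
and the analogous expression for $R_3$ with $\gamma-\pi$ and $\gamma-\tfrac{\pi}{3}$ interchanged. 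Dividing by $r\sin(\gamma+\tfrac{\pi}{3})$ produces exactly the two asserted formulas.

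The case $i=1$ requires one extra observation. Here $\partial_\gamma R_1 = \tfrac{1}{2}\cos(\gamma+\tfrac{\pi}{3})\big(|\sigma|^2-r^2\big)$, which is \emph{not} identically zero as a function on $\mR^2$. However, since $\Gamma=DB_{r,\gamma,0}(0,0)$, the arc $\Gamma_1$ lies on the circle $S_{r}(0,0)$ centred at the origin with radius $r$, so $|\sigma|=r$ and hence $|\sigma|^2-r^2=0$ for every $\sigma\in\Gamma_1$. Consequently $\partial_\gamma R_1=0$ and therefore $\partial_\gamma G_1=0$ on $\Gamma_1$, as claimed. I do not expect any genuine obstacle: the whole argument is a short computation, and the only point demanding care is to restrict $\sigma$ to $\Gamma_i$ \emph{before} drawing conclusions, so that $G_i$ (and, for $i=1$, the quantity $|\sigma|^2-r^2$) may be set to zero.
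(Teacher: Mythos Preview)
Your proposal is correct and follows precisely the approach the paper intends: the paper merely writes ``Similarly we get'' after the proof of Proposition~\ref{Prop: derivative G r}, and what you have written is exactly that similar computation spelled out in full. Your extra remark for $i=1$, namely that $|\sigma|^2-r^2=0$ on $\Gamma_1$ so that the nonzero $\partial_\gamma R_1$ still yields $\partial_\gamma G_1=0$ there, is the one point that requires care and you have handled it correctly.
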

\subsubsection{Five-dimensional smooth manifold}
Throughout this section, without loss of generality, we may assume that the center of  $\Gamma^*_1$ is at the origin of $\mathbb R^2$ and that the angle $\theta^* = 0$, that is  
$$
\Gamma^* = {DB}_{r^*, \gamma^*,0}(0,0)\,.
$$ 

Clearly,  $\mathcal E \neq \emptyset$ as $\rho \equiv 0$ parameterizes $\Gamma^* = DB_{r^*, \gamma^*, 0}(0, 0)$. First we   demonstrate,  by applying the  implicit function theorem,
 that    every  standard planar double bubble $DB_{r, \gamma, \theta}( a_1, a_2)$ sufficiently close to $\Gamma^* = DB_{r^*, \gamma^*, 0}(0, 0)$ can be parameterized by some unique vector function $\rho= (\rho_1, \rho_2, \rho_3)$    depending smoothly on the parameters $a_1, a_2, r_{}, \gamma_{}$ and $ \theta $. We continue then to verify that  the set $\mathcal E$ of equilibria is actually a smooth manifold of dimension five.  
\begin{theo}
Any standard planar double bubble $DB_{r, \gamma, \theta}(a_{1},a_{2})$ sufficiently close to $\Gamma^*$, i.e., $(a_1, a_2, r, \gamma, \theta ) \in B_\epsilon (0, 0, r^*, \gamma^*,0 )$ for sufficiently small $\epsilon$, can be  parameterized by some unique smooth vector function $\rho=\rho(a_1, a_2, r, \gamma, \theta ) \in B_{X_1}(0,R)$.
\end{theo}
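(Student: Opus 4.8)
The plan is to realize the claim as a single application of the implicit function theorem in H\"older spaces, using the level-set description of standard planar double bubbles from Lemma~\ref{Rep: SPDB}. First I would remove the restriction $\theta=0$, $(a_1,a_2)=(0,0)$ in Lemma~\ref{Rep: SPDB} by incorporating the rigid motions: for $p=(a_1,a_2,r,\gamma,\theta)$ set
\[
\widetilde G_i(\sigma,p):=G_i\bigl(R_\theta^{-1}(\sigma-(a_1,a_2)),\,r,\gamma\bigr),
\]
where $R_\theta$ denotes counterclockwise rotation by $\theta$. Each $\widetilde G_i$ is smooth in $(\sigma,p)$, the arc $\Gamma_i$ of $DB_{r,\gamma,\theta}(a_1,a_2)$ lies in $\{\widetilde G_i(\cdot,p)=0\}$, and the identity $\sum_{i}\widetilde G_i\equiv 0$ is inherited from \eqref{Id: sum to zero}. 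At the base point $p^*=(0,0,r^*,\gamma^*,0)$ one has $\widetilde G_i(\cdot,p^*)=G_i(\cdot,r^*,\gamma^*)$, whose zero set contains $\Gamma_i^*$.

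Next I would encode ``$\rho$ parameterizes $DB_p$'' as a nonlinear equation. Recalling the parameterization \eqref{par2} with the tangential term $\mu=\mathcal J\rho$ already built in, define
\[
\mathcal N_i(\rho,p)(\sigma):=\widetilde G_i\bigl(\sigma+\rho_i(\sigma)\,n_i^*(\sigma)+(\mathcal J\rho|_{\Sigma^*})_i(\mathrm{pr}_i\sigma)\,\tau_i^*(\sigma),\,p\bigr),
\]
so that $\mathcal N\colon U\subset X_1\times\mR^5\to Y$, with $Y=X_1=C^{4+\a}$, is smooth by the differentiability of composition operators (exactly as argued for (H1)). Plainly $\mathcal N(0,p^*)=0$. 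The crucial point is the Fr\'echet derivative $D_\rho\mathcal N(0,p^*)$: differentiating in a direction $h=(h_1,h_2,h_3)$ and using Lemma~\ref{Cor: gradient G}, namely $\nabla_\sigma\widetilde G_i(\sigma,p^*)=n_i^*(\sigma)$ on $\Gamma_i^*$, gives
\[
D_\rho\mathcal N_i(0,p^*)h=\bigl(n_i^*\!\cdot n_i^*\bigr)\,h_i+\bigl(n_i^*\!\cdot\tau_i^*\bigr)\,(\mathcal J h)_i\circ\mathrm{pr}_i=h_i,
\]
because $\tau_i^*$ is tangential to $\Gamma_i^*$ and hence orthogonal to $n_i^*$. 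Thus the nonlocal term disappears and $D_\rho\mathcal N(0,p^*)$ is the identity on $X_1$, in particular an isomorphism. The implicit function theorem then produces a neighborhood of $p^*$ and a unique smooth map $p\mapsto\rho(p)\in B_{X_1}(0,R)$ with $\rho(p^*)=0$ and $\mathcal N(\rho(p),p)=0$.

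Finally I would confirm that $\rho(p)$ genuinely parameterizes $DB_p$, i.e.\ that the three image arcs meet at two common triple junctions and cover the correct sub-arcs. By Lemma~\ref{Con:contact}, since $\mu=\mathcal J\rho$ is built into $\Phi_i$, the three curves meet at $\Sigma^*$ exactly when $\rho_1+\rho_2+\rho_3=0$ there. To verify this, restrict the equations to a junction $p_+^*$: the finite system $\widetilde G_i(\Phi_i(p_+^*),p)=0$ $(i=1,2,3)$ again has identity derivative, hence a unique small solution; but the explicit choice $\rho_i(p_+^*)=\langle P_+(p)-p_+^*,n_i^*\rangle$, where $P_+(p)$ is the triple junction of $DB_p$ (depending smoothly on $p$, being a transversal intersection of two circles near $p_+^*$), also solves it, since $P_+(p)$ lies on all three circles and, by the computation in the proof of Lemma~\ref{Con:contact}, automatically satisfies $\mu=\mathcal J\rho$ and $\sum_i\rho_i(p_+^*)=\langle P_+(p)-p_+^*,\sum_i n_i^*\rangle=0$ by the angle condition. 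By uniqueness the two coincide, so the junctions match and equal $P_\pm(p)$; continuity from the exact case $p=p^*$ then guarantees that each $\Phi_i$ covers the correct arc rather than its complement. I expect this last step to be the main obstacle: the level sets $\{\widetilde G_i=0\}$ are \emph{full} circles, so $\mathcal N_i=0$ only places the image somewhere on a circle, and one must exclude the arcs meeting at the wrong intersection point or failing to meet at all. This is precisely where the identity $\sum_i\widetilde G_i\equiv0$, the angle condition $\sum_i n_i^*=0$, and Lemma~\ref{Con:contact} carry the essential geometric content, while the interior solvability and the smooth dependence on $p$ follow routinely from the identity derivative.
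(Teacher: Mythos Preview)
Your approach is essentially the paper's: both set up $F_i(\rho,p)=G_i\bigl(Q_\theta T_{\vec a}\Psi_i(\cdot,\rho_i,(\mathcal J\rho)_i\circ\mathrm{pr}_i),r,\gamma\bigr)$, compute $\partial_\rho F(0,p^*)=I$ via Lemma~\ref{Cor: gradient G}, and invoke the implicit function theorem; the arc-versus-complement issue is dispatched the same way in both, by smallness of $\rho$. The one tactical difference is where the constraint $\rho_1+\rho_2+\rho_3=0$ on $\Sigma^*$ is enforced. The paper builds it into the function spaces, taking both domain and codomain equal to $Y=\{\rho\in X_1:\sum_i\rho_i=0\text{ on }\Sigma^*\}$, and checks that $F$ maps $Y$ to $Y$: for $\rho\in Y$ Lemma~\ref{Con:contact} gives $\Psi_1=\Psi_2=\Psi_3$ on $\Sigma^*$, so $\sum_i F_i=\sum_i G_i(\Psi_1,r,\gamma)=0$ by \eqref{Id: sum to zero}. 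This yields $\rho(p)\in Y$ directly and makes the triple-junction matching automatic. Your route---apply the IFT in the full space $X_1$ and then recover the constraint a posteriori by a finite-dimensional uniqueness argument at each junction---is correct but costs you the extra step that you yourself flag as the main obstacle; the paper's choice of $Y$ is precisely what sidesteps it.
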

\begin{proof}
We use the implicit function theorem of Hildebrandt and Graves, see Zeidler \cite[Theorem 4.B]{Zeidlernonlinear1}, with  $ (x_0,y_0) =\big((0,0, r^*,\gamma^*, 0),0\big) $,
\begin{align*}
X &= \mR^2 \times B_{\delta_1} (r^*) \times B_{\delta_2}(\gamma^*)\times \mR \,,  \quad Z= Y,     
\\
Y&=  \Big \{ \rho \in   C^{4 + \a}(\Gamma_1^*) \times C ^ {4 + \a} ( \Gamma_2^*) \times C^{4 + \a} (\Gamma_3^*): \rho_1 + \rho_2 + \rho_3 = 0  \text{ on } \Sigma^* \Big \}, 
\end{align*}
\begin{align*} 
 F : X \times Y  &\rightarrow  Z\,,\\\big( (a_1, a_2, r, \gamma, \theta), \rho \big) &\mapsto \left(F_1, F_{2}, F_3 \right)
\end{align*}
with 
\begin{equation*}
   \begin{aligned}
      F_i( a_1, a_2, r, \gamma, \theta, \rho) &:= G_i \Big( Q_{\theta} T_{\vec  a}\Psi_ i(\cdot, \rho_i, \mu_i \circ \mathrm{pr}_i), r,\gamma \Big )  \qquad (i=1,2,3)\,.
\end{aligned}
\end{equation*}
 Here  $G_i$ are the functions stated in  Lemma \ref{Rep: SPDB} and

\begin{align*}
   \Psi_i(\cdot,\rho_{i},\mu_i \circ \mathrm{pr}_i)(\sigma) = \sigma + \rho_i ( \sigma ) n^*_i(\sigma )+ \mu_i (\mathrm{pr}_i(\sigma) ) \tau_i^*(\sigma)  & &\text{ for }\sigma \in \Gamma^*_i\,,
\end{align*}
where $ \mu  =   \mathcal{J} \rho$ on $ \Sigma^*$.  
Furthermore, 
\begin{align*} 
   Q_\theta     = 
               \begin{bmatrix}
                 \cos \theta   & \sin \theta \\
                  -\sin \theta & \cos \theta 
               \end{bmatrix} , 
   \quad  T_{\vec a} v = v - \vec a               
\end{align*}  
are the clockwise rotation matrix and  the translation operator respectively.

Indeed, the   image of the function $F$ lies in   $Z=Y$, that is    
\begin{equation} \label{Id: sum}
F_1 + F_2 + F_3 = 0 \quad\text{ on } \Sigma^*.
\end{equation}
To see this, note  that for  $ \sigma \in\Sigma^*$, 
$$\Psi_1(\cdot,\rho_{1},\mu_1 \circ \mathrm{pr}_1)(\sigma)  = \Psi_2(\cdot,\rho_{2},\mu_2 \circ \mathrm{pr}_2)(\sigma)  = \Psi_3(\cdot,\rho_{3},\mu_3 \circ \mathrm{pr}_3)(\sigma) \,,  $$ 
by Lemma \ref{Con:contact}. This together with the identity  \eqref{Id: sum to zero} proves \eqref{Id: sum}.

Moreover, since $\Psi_i|_{\rho = 0} = I$, according to Lemma \ref{Rep: SPDB} we have  
$$
F_{i}( x_0, y_0 )(\sigma)=F_{i} \big((0,0, r^*,\gamma^*, 0),0\big) (\sigma)= G_i (\sigma, r^*,\gamma^* ) = 0 \quad \text{ for } \sigma \in \Gamma_i^* .
$$
Thus $ F(x_0,y_0) = 0 $.
Now let us   compute the derivative $\partial_\rho F (x_{0} , y_0)$: 
\begin{align*}
\partial_\rho F_{i} (x_{0}, y_0) (v)( \sigma ) &=  \nabla_\sigma G _i(\sigma, r^{*},\gamma^*) \cdot  \big( v_i^{} \, n_i^*(\sigma ) + \big(\mathcal J \,v ( \mathrm{pr}_i ( \sigma)) \big)_i  \,\tau_i^*(\sigma) \big) = v_i \,,
\end{align*}
where we used Lemma \ref{Cor: gradient G}. Thus 
\begin{equation}\label{Eq: bijec}
\partial_{\rho}F (x_0, y_0)= I \,. 
\end{equation}
Furthermore, $F$ is a smooth map on a neighborhood of $ (x_0, y_0)$.

Therefore, according to the implicit function theorem,      there exist neighborhoods $U = B_{\epsilon}(x_{0})$  of $x_0$ and $V=B_{X_1}(0, R)$ of $y_0 = 0$ and a smooth function
\begin{align*}
\rho: U         & \longrightarrow  V \\
      (a_1, a_2, r, \gamma, \theta )&\mapsto \rho (a_1, a_2, r, \gamma, \theta )\,,  
\end{align*}
such that 
$\rho(x_0) = 0$ and for  every $(a_1, a_2, r, \gamma, \theta ) \in B_\epsilon (0, 0, r^*, \frac{\pi}{3},0 )$ we have
\begin{align}\label{F: zero level set}
F\big((a_1, a_2, r, \gamma, \theta ), \rho(a_1, a_2, r, \gamma, \theta)\big)=0 \,.
\end{align}
Moreover if $(x, y) \in U \times V$ and $F(x , y) = 0$ then $y= \rho(x)$. 

We now claim that   $\Gamma_{\rho_{}^{}} = \{ \Gamma_{\rho_1^{}}, \Gamma_{\rho_2^{}}, \Gamma_{\rho_3^{}} \}$ parameterized by the  function   $\rho = \rho(a_1, a_2, r, \gamma, \theta )$  is the standard planar double bubble $DB_{r, \gamma, \theta}(a_{1},a_{2})$. 
To see this, note 
\begin{align*}
F_i\big((a_1, a_2, r, \gamma, \theta ), \rho(a_1, a_2, r, \gamma, &\theta )\big ) = 0 \\
& \iff G_i \Big( Q_{\theta} T_{\vec  a}\Psi_i  (\cdot, \rho_i, \mu_i \circ \mathrm{pr}_i), r,\gamma \Big ) = 0 \\
           &\iff Q_{\theta} T_{\vec  a} \Gamma_{\rho_i} \subset S_{r_i}(O_i) \quad \text{by Lemma \ref{Rep: SPDB}}.
\end{align*}
Therefore, since Lemma \ref{Con:contact} guaranties that the curves $ \Gamma_{\rho_1^{}},  \Gamma_{\rho_2^{}},  \Gamma_{\rho_3^{}}$ meet at their boundaries, we end up with two choices: Either $\Gamma_{\rho_i} = \Gamma_i^{}$, where $\Gamma = \{ \Gamma_1, \Gamma_2, \Gamma_3  \}$ is a standard double bubble $DB_{r, \gamma, \theta}(a_1,a_2)$ or $\Gamma_{\rho_i}$ is the complementary part of $\Gamma_i$ in $S_{r_i}(O_i)$. But the latter can not happen since the norm of $\rho$ is small. Hence  
$$
\Gamma_{\rho(a_1, a_2, r, \gamma, \theta )}= DB_{r, \gamma, \theta}(a_1,a_2) \,,
$$
as required.
\end{proof}
\begin{theo}\label{Theo: DB equilibria 5}
The set of equilibria $ \mathcal{E}$ is in a neighborhood of zero a $C^{2}$-manifold  in $ X_1 $ of dimension $5$.
\end{theo}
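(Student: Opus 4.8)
The plan is to take as the chart the map $\Psi:=\rho$ furnished by the preceding theorem, with $k=5$ and $U=B_\epsilon(0,0,r^*,\gamma^*,0)\subset\mR^5$, and to verify the four items in the manifold definition of Section \ref{setting}. Three of them I expect to be immediate. By construction $\Psi$ is smooth, hence $C^2$, and $\Psi(0)=\rho(0,0,r^*,\gamma^*,0)=0=u^*$; moreover each $\rho(a_1,a_2,r,\gamma,\theta)$ parameterizes the standard double bubble $DB_{r,\gamma,\theta}(a_1,a_2)$ and therefore lies in $\mathcal E$, so $\Psi(U)\subset\mathcal E$. For the identity $\mathcal E\cap B_{X_1}(0,r_1)=\Psi(U)$ I would invoke the characterization $\mathcal E=\{\rho\in B_{X_1}(0,R):\rho\text{ parameterizes a standard planar double bubble}\}$ established above: any equilibrium $\rho$ close to $0$ parameterizes a standard double bubble close to $\Gamma^*$, which by Proposition \ref{Prop: existence SDB} equals $DB_{r,\gamma,\theta}(a_1,a_2)$ for parameters near $(0,0,r^*,\gamma^*,0)$; after shrinking $r_1$ so that these parameters lie in $U$, the uniqueness clause of the preceding theorem gives $\rho=\Psi(a_1,a_2,r,\gamma,\theta)$.

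The crux is the rank condition $\operatorname{rank}\Psi'(0)=5$, i.e.\ the linear independence in $X_1$ of the five columns $\partial_{a_1}\rho,\partial_{a_2}\rho,\partial_r\rho,\partial_\gamma\rho,\partial_\theta\rho$ evaluated at $x_0=(0,0,r^*,\gamma^*,0)$. First I would compute them: differentiating $F(x,\rho(x))=0$ from \eqref{F: zero level set} and using $\partial_\rho F(x_0,0)=I$ from \eqref{Eq: bijec} gives $\partial_{x_j}\rho(x_0)=-\partial_{x_j}F(x_0,0)$. Since for $a_1,a_2,\theta$ only the translation and rotation factors depend on the parameter, Lemma \ref{Cor: gradient G} (giving $\nabla_\sigma G_i=n_i^*$ on $\Gamma_i^*$) identifies the first three as the normal components $\langle n_i^*,e_1\rangle$, $\langle n_i^*,e_2\rangle$ and $\langle n_i^*,(-\sigma_2,\sigma_1)\rangle$ of the two translational and the rotational Killing fields; the remaining two, $\partial_r\rho$ and $\partial_\gamma\rho$, are read off from Propositions \ref{Prop: derivative G r} and \ref{Prop: derivative gamma}.

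To prove independence I would work inside the null space. Since each $\rho(x)\in\mathcal E$ solves $A\rho=F(\rho)$, $B\rho=G(\rho)$, differentiating at $x_0$ and using $F'(0)=0$, $G'(0)=0$ from (H1) shows $A_0\,\partial_{x_j}\rho(x_0)=0$, so all five lie in $N(A_0)$. I would then exploit the linear map $c:N(A_0)\to\{c\in\mR^3:c_1+c_2+c_3=0\}$ defined by \eqref{Eq: linear ci}. The three rigid motions preserve curvature, hence satisfy $\Delta_{\Gamma_i^*}u_i+(\kappa_i^*)^2u_i=0$ and map to $c=0$ (they are Jacobi functions), whereas scaling and varying $\gamma$ change the curvatures: using the law of sines \eqref{Eq: D useful one} one finds $c(\partial_r\rho)$ proportional to $\big(\sin(\gamma^*+\tfrac{\pi}{3}),\sin(\gamma^*-\tfrac{\pi}{3}),\sin(\gamma^*-\pi)\big)$ and $c(\partial_\gamma\rho)$ proportional to the corresponding cosine vector, which are independent by \eqref{cos and sin}. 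Applying $c$ to a vanishing linear combination therefore annihilates the $\partial_r$- and $\partial_\gamma$-coefficients, and the surviving relation $\langle n_i^*,\alpha e_1+\beta e_2+\delta(-\sigma_2,\sigma_1)\rangle\equiv0$ forces the Killing field $\alpha e_1+\beta e_2+\delta(-\sigma_2,\sigma_1)$ to be everywhere tangent to $\Gamma^*$; this is impossible for a nonzero field because the three defining circles have distinct centers and $\Gamma_1^*,\Gamma_3^*$ have nonzero curvature. Hence all coefficients vanish, $\operatorname{rank}\Psi'(0)=5$, and together with Lemma \ref{Lem: DB null < 5} this also yields $\dim N(A_0)=5$ and $T_{u^*}\mathcal E=N(A_0)$.

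I expect the independence argument to be the only real obstacle; the manifold axioms and the computation of the five tangent vectors are essentially bookkeeping. The point is to show that the scaling and curvature-ratio deformations are genuinely transverse to the rigid motions, and I find it far cleaner to detect this through the variational functional $c$ and the trigonometric identities \eqref{Eq: D useful one}, \eqref{cos and sin} than to attack the explicit and unwieldy formulas of Propositions \ref{Prop: derivative G r} and \ref{Prop: derivative gamma} directly.
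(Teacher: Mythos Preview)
Your approach is sound and genuinely different from the paper's. The paper computes all five $\partial_\iota\rho(x_0)$ explicitly by pushing the formulas of Propositions \ref{Prop: derivative G r} and \ref{Prop: derivative gamma} through the arc-length parameterizations of Proposition \ref{Pro: DB arc para}, treating the cases $\kappa_2^*\neq 0$ and $\kappa_2^*=0$ separately, and then reads off independence from the resulting trigonometric expressions. You instead split the five tangent vectors via the curvature-variation map $c:N(A_0)\to\{c\in\mR^3:\sum c_i=0\}$ from \eqref{Eq: linear ci}: the three rigid motions land in $\ker c=\mathcal J(\Gamma^*)$ and are handled by the Killing-field argument, while $\partial_r\rho$ and $\partial_\gamma\rho$ are separated by showing that their images under $c$ span the $2$-plane. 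This is cleaner, avoids the case distinction, and makes the link to Lemma \ref{Lem: DB null < 5} transparent.

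One correction is needed. The claim that $c(\partial_\gamma\rho)$ is proportional to the cosine vector $\big(\cos(\gamma^*+\tfrac{\pi}{3}),\cos(\gamma^*-\tfrac{\pi}{3}),\cos(\gamma^*-\pi)\big)$ is not right. Since $c$ is the linearized curvature, one has $c(\partial_\gamma\rho)=\partial_\gamma(\kappa_1,\kappa_2,\kappa_3)\big|_{\gamma^*}$; but $\kappa_1=-1/r$ is constant in $\gamma$, and writing $\kappa_i=\lambda(\gamma)\sin(\gamma+\alpha_i)$ with $\lambda(\gamma)=\kappa_1/\sin(\gamma+\tfrac{\pi}{3})$ shows that the $\gamma$-derivative picks up an extra sine term from $\lambda'$. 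A direct computation gives
\[
c(\partial_\gamma\rho)=\frac{\kappa_1^*\sin\tfrac{2\pi}{3}}{\sin^2(\gamma^*+\tfrac{\pi}{3})}\,(0,1,-1),
\]
which you can also verify from the paper's explicit $v^{(5)}$ at $\gamma^*=\tfrac{\pi}{3}$. This does not damage your argument: $(0,1,-1)$ is independent of the sine vector $(\kappa_1^*,\kappa_2^*,\kappa_3^*)$ precisely because $\kappa_1^*\neq 0$, so $c(\partial_r\rho)$ and $c(\partial_\gamma\rho)$ still span the $2$-plane and the rest of your reasoning goes through unchanged.
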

\begin{proof}
Remind that we have shown 
\begin{align*}
 \mathcal E \cap U &= \Big\{\rho \in  B_{X_1}(0,R) : \rho \text{ parameterizes a standard planar double bubble}\Big\} \cap U\\
&= \Big\{ \, \rho(a_1,a_2, r, \gamma, \theta): (a_1,a_2, r, \gamma, \theta) \in U = B_\epsilon (0,0, r^{*}, \gamma^*, 0)  \,\Big\}\,,
\end{align*}
where the function 
\begin{align*}
\rho:\,  U        &\longrightarrow   X_1 = C^{4 + \a}(\Gamma_1^*) \times C ^ {4 + \a} ( \Gamma_2^*) \times C^{4 + \a} (\Gamma_3^*) \\
     \quad (a_1, a_2, r, \gamma, \theta )& \mapsto \rho\ (a_1, a_2, r, \gamma, \theta )  
\end{align*}
is smooth, in particular $C^2$ and  $\rho(U) = \mathcal E$, $\rho(x_0) =\rho (0,0, r^{*}, \gamma^*, 0)= 0$.

Therefore,  it is left to check  that the rank of $\rho'(x_0)$ equals five (see  the definition of a manifold on page  \pageref{set of equilibria E}). To do this, we differentiate \eqref{F: zero level set}  with respect to $\iota \in \{a_1, a_2, r, \gamma, \theta \}$ and evaluate  at $x_0 $   to get 
\begin{align*} 
\partial_\iota F(x_{0}, 0) + \partial_\rho F(x_{0}, 0) \partial_\iota \rho (x_0) = 0 \,.
\end{align*}
Therefore,
\eqref{Eq: bijec} gives
$$
\partial_\iota \rho(x_0) =-   \partial_\iota F(x_0, 0)\qquad (\iota \in \{a_1, a_2, r, \gamma, \theta \})\,. 
$$

We now calculate  
\begin{equation*}
\begin{aligned}
\partial_{a_1}F_i(x_0, 0) &= \nabla_\sigma G_{i} (\sigma^{},r^*, \gamma^*)\cdot(-1,0)=n_i^*(\sigma) \cdot (-1,0) =\cos (\kappa_i^* x)\,,
\end{aligned}
\end{equation*}
where we used the fact $ n_i^* ( \sigma) =- (\cos (\kappa_i^* x), \sin( \kappa_i^* x))$, $i=1,2,3 $. Thus
$$
\partial_{a_1} \rho(x_0) =
\big( \cos (\kappa_1^* x ),  \cos (\kappa_2^* x ), \cos (\kappa_ 3^* x ) \big).
$$
Similarly,  we get $ 
\partial_{a_2} \rho(x_0) =
\big(\sin (\kappa_1^* x ),  \sin (\kappa_2^* x ), \sin (\kappa_3^* x ) \big)
$.

Next we calculate
\begin{equation*}
\begin{aligned}
\partial_{\theta}F_{i}(x_0, 0) &= \nabla_\sigma G_{i} (\sigma,r^*, \gamma^*)\cdot
\big( 
\left[
\begin{smallmatrix}
0 & 1 \\
-1 & 0
\end{smallmatrix}
\right] \cdot \sigma \big) \\
&=   n^*_i( \sigma)  \cdot \sigma^\perp = n_i^*(\sigma) \cdot \big(- \frac{1}{\kappa_i^*}n^*_i(\sigma) + O^*_i \big)^\perp
\\
&= n_i^*( \sigma) \cdot {O_i^*}^\perp 
\end{aligned}
\end{equation*}
and so 
$$
\partial_{\theta}\rho(x_0) = \tfrac{\sin (\frac{ \pi}{3})}{\sin (\gamma^*)}r^* \begin{pmatrix}
 0 \\[4pt] 
 \frac{ \sin (\gamma^*)}{\sin(\gamma^* - \frac{ \pi}{3}) }\sin (\kappa_2^* x) \\[4pt]
 \sin (\kappa_3^* x ) \,
  \end{pmatrix}\,.
  $$

We now compute the derivative   $\partial_r F(x_0,0)= \partial_r G(\sigma, r^*, \gamma^*)$. According to Proposition
\ref{Prop: derivative G r}
$$
\partial_r G_2(\sigma, r^*, \gamma^*) = -\frac{1}{\sin ( \gamma^{*} + \frac{\pi}{3})} \Big( \frac{\sin (\tfrac{\pi}{3}) }{r^*} \sigma_1 + \sin(\gamma^* - \pi) \Big)\,.
$$ 
First we consider the case $\kappa_2^* \neq 0$. 
Employing the arc-length parameterization of $\Gamma_2^*$  derived in Proposition \ref{Pro: DB arc para} we obtain 
\begin{align*}
\partial_r G_2(\sigma, r^*, \gamma^*) &= -\frac{1}{
\sin ( \gamma^{*} + \frac{\pi}{3})} \Big( \frac{\sin (\tfrac{\pi}{3}) }{r^*} \sigma_1 + \sin(\gamma^* - \pi) \Big) \\
&= \frac{ \kappa_1^* \sin (\frac{\pi}{3})}{ \kappa_2^* \sin ( \gamma^* + \frac{\pi}{3})}\cos(\kappa_2^* x)-\frac{1}{\sin ( \gamma^{*} + \frac{\pi}{3})} \Big(\frac{\sin^2 (\frac{\pi}{3}) }{\sin(\gamma^* - \frac{\pi}{3})} + \sin(\gamma^* - \pi)  \Big) \\[3pt]
&=\frac{  \sin (\frac{\pi}{3})}{ \sin ( \gamma^* - \frac{\pi}{3})}\cos(\kappa_2^* x)-\frac{\sin ( \gamma^{*} + \frac{\pi}{3})}{\sin(\gamma^* - \frac{\pi}{3})}\,, \end{align*}
where  we applied the formula $ \sin^2 (x)-\sin^2 (y)= \sin(x+y)\sin(x-y)$.
 
A similar argument works for $\partial_r G_3(\sigma, r^*, \gamma^*)$.
Altogether we derive in case $\gamma^* \neq \frac{\pi}{3}$,
$$
\partial_{r} \rho(x_0) =   
\begin{pmatrix}
  1 \, \vspace{5 pt}\\ 
     -  \dfrac{  \sin (\tfrac{\pi}{3})}{ \sin ( \gamma^* - \frac{\pi}{3})}\cos(\kappa_2^* x)+\dfrac{\sin ( \gamma^{*} + \frac{\pi}{3})}{\sin(\gamma^* - \frac{\pi}{3})} \vspace{6pt}\\
    \dfrac{  \sin (\tfrac{\pi}{3})}{ \sin ( \gamma^* -\pi)}\cos(\kappa_3^* x)+\dfrac{\sin ( \gamma^{*} + \frac{\pi}{3})}{\sin(\gamma^* - \pi)} \,
  \end{pmatrix} \,.
$$
Next we consider the case $\kappa_2^* = 0$:
 We calculate \begin{align*}
 \partial_r G_2(\sigma, r^*, \tfrac{\pi}{3}) &= -\frac{1}{\sin ( \frac{2\pi}{3})} \Big( \frac{\sin (\tfrac{\pi}{3}) }{r^*}  \frac{r^*}{2^{}}  + \sin(- \frac{2\pi}{3}) \Big) = \frac{1}{2}\,.
\end{align*}
Therefore, we derive in case $\kappa_2^* = 0$, i.e., when $x_0 = (0,0, r^*, \frac{\pi}{3},0)$,
$$
\partial_{r} \rho(x_0) =   
\begin{pmatrix}
  1  \\ 
   -\frac{1}{2}  \\
   - \cos(\kappa_1^*x)-1
  \end{pmatrix}\,.
$$

Finally let us calculate  $   \partial_\gamma F(x_0, 0)$. We have
$
\partial_\gamma F(x_0,0) = \partial_\gamma G(\sigma, r^*, \gamma^*)$. We first consider the case $\kappa_2^* \neq 0$:
Employing the arc length parameterization of $\Gamma^*$ to the formulas derived in  Proposition
\ref{Prop: derivative gamma} we derive in case $\kappa_2^* \neq 0$ that 
$$
\partial_{\gamma} \rho(x_0) =   
\begin{pmatrix}
  0  \\ 
  a_2 \cos(\kappa_2^*x) +b_2 \\
   a_3 \cos(\kappa_3^*x) + b_3
  \end{pmatrix}
$$
for some constants $a_i, b_i$ (see the Appendix for the  explicit form of the constants). This immediately implies that   $\partial_\gamma \rho(x_0)$ is independent from the other elements of $\rho'(x_o)$. 

However,  we give the explicit formula in case  $\kappa_2^* = 0$.
   Using Proposition
\ref{Prop: derivative gamma}  we see
\begin{align*}
\partial_\gamma G_2( \sigma, r^*,\tfrac{\pi}{3}) &= \frac{1}{2r^*\sin ( \frac{2\pi}{3})} \Big(  \frac{1}{4} (r^*)^2 + x^2 + \frac{1}{2} (r^*)^2 \Big)\\
&=- \frac{\kappa_1^*}{\sin ( \frac{2\pi}{3})} \Big(\frac{1}{2}x^2 + \frac{3}{8}\frac{1}{(\kappa_1^*)^2} \Big)\,,
\end{align*}
\begin{align*}
\partial_\gamma G_3( \sigma, r^*,\tfrac{\pi}{3}) &=\frac{1}{2r^*\sin ( \frac{2\pi}{3})}\Big(\cos (-\tfrac{2\pi}{3})|\sigma|^2 - (r^*)^2  \Big) \\
&= \frac{-1}{2r^*\sin ( \frac{2\pi}{3})} (\frac{1}{2} | \sigma|^2+ (r^*)^2 \big) \\
&=  \frac{1}{ \kappa_1^*\sin ( \frac{2\pi}{3})}( 1 + \frac{1}{2} \cos(\kappa_1^* x))\,.
\end{align*}

In summary, we  have proved that  the rank of $\rho'(x_0)$ is equal to five and we have shown that the set of equilibria $ \mathcal{E}$ is a $C^{2}$-manifold  in $ X_1 $ of dimension five. Moreover 
$$
T_{0} \mathcal E = \mathrm{span}\big\{ v^{(1)}, v^{(2)}, v^{(3)}, v^{(4)}, v^{(5)} \big\},
$$
where
\begin{equation*}
v^{(1)} =
 \begin{pmatrix}
    \cos (\kappa_1^* x )\\[4pt]
    \cos (\kappa_2^* x )\\[4pt]
    \cos (\kappa_3^* x )
  \end{pmatrix}, \quad
v^{(2)} 
   =\begin{pmatrix}
    \sin (\kappa_1^* x )\\[4pt]
    \sin (\kappa_2^* x )\\[4pt]
    \sin (\kappa_3^* x )
  \end{pmatrix}, \quad
v^{(3)} 
  =\begin{pmatrix}
    0\\[4pt]
     \frac{ \sin(\gamma^*)}{ \sin(\gamma^*-  \frac{\pi}{3})}\sin (\kappa_2^* x )\\[4pt]
\sin (\kappa_3^* x )
  \end{pmatrix},
\end{equation*}
\begin{equation*}
v^{(4)}=
\begin{pmatrix}
  1 \\[2pt] 
       \frac{  -\sin (\frac{\pi}{3})}{ \sin ( \gamma^* - \frac{\pi}{3})}\cos(\kappa_2^* x)+\frac{\sin ( \gamma^{*} + \frac{\pi}{3})}{\sin(\gamma^* - \frac{\pi}{3})} \\[7.5pt]
    \frac{  \sin (\frac{\pi}{3})}{ \sin ( \gamma^* -\pi)}\cos(\kappa_3^* x)+\frac{\sin ( \gamma^{*} + \frac{\pi}{3})}{\sin(\gamma^* - \pi)} \,
  \end{pmatrix}\,, \quad
v^{(5)}=
\begin{pmatrix}
  0  \\[2pt]
  a_2 \cos(\kappa_2^ *x) +b_2 \\[6pt]
   a_3 \cos(\kappa_3^*x) + b_3
  \end{pmatrix}.
\end{equation*}
Although $v^{(i)}$ are continuous  in particular at $\kappa_2^* = 0$,  for  convenience we state them in case $\kappa_2^* =0$:
\begin{equation*}
v^{(1)} =
 \begin{pmatrix}
    \cos (\kappa_1^* x )\\
    1\\
    \cos (\kappa_1^* x )
  \end{pmatrix}, \quad
v^{(2)} 
   =\begin{pmatrix}
    \sin (\kappa_1^* x )\\
   0\\
    -\sin (\kappa_1^* x )
  \end{pmatrix}, \quad
v^{(3)} 
  =\begin{pmatrix}
    0\\
     \kappa_1^* x\\
-\sin (\kappa_1^* x )
  \end{pmatrix},
\end{equation*}
\begin{equation*}
v^{(4)}=
\begin{pmatrix}
  1  \\[2pt] 
      -\frac{1}{2}\\[3pt]
    - \cos(\kappa_1^*x)-1 \,
\end{pmatrix}\,, \quad
v^{(5)}=
\begin{pmatrix}
  0  \\[5pt]
   \tfrac{\kappa_1^*}{\sin ( \frac{\pi}{3})} (\frac{1}{2}x^2 + \frac{3}{8}\frac{1}{(\kappa_1^*)^2} )\, \\[9pt]
     \frac{-1}{ \kappa_1^*\sin ( \frac{\pi}{3})}( \frac{1}{2} \cos(\kappa_1^* x) + 1)
  \end{pmatrix}.  
\end{equation*}
\end{proof}
\subsubsection{Geometric interpretation of the null space} \label{Sec: gem interp} 
As an immediate corollary of Theorem \ref{Theo: DB equilibria 5}  we get
\begin{cor} \label{cor: null and manifold}
The null space $N(A_0)$ is five dimensional. Furthermore,
$$
T_0 \mathcal E = N(A_0)\,.
$$
\end{cor}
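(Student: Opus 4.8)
The plan is to prove the corollary by a dimension-counting argument, combining the two nontrivial estimates already in hand: the lower bound coming from Theorem \ref{Theo: DB equilibria 5}, which exhibits a five-dimensional tangent space $T_0\mathcal E = \mathrm{span}\{v^{(1)},\dots,v^{(5)}\}$, and the upper bound $\dim N(A_0)\le 5$ from Lemma \ref{Lem: DB null < 5}. The only link still needed is the inclusion $T_0\mathcal E\subseteq N(A_0)$; once this is secured, the sandwich $5=\dim T_0\mathcal E\le\dim N(A_0)\le 5$ forces $\dim N(A_0)=5$, and since a five-dimensional subspace of a five-dimensional space is the whole space, $T_0\mathcal E=N(A_0)$.

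First I would establish $T_0\mathcal E\subseteq N(A_0)$. Recall that in the abstract formulation \eqref{set of equilibria E} a stationary solution $\rho$ satisfies $A\rho=F(\rho)$ in $\Omega$ and $B\rho=G(\rho)$ on $\partial\Omega$. Given a tangent vector $w\in T_0\mathcal E$, I choose a $C^1$-curve $s\mapsto\rho(s)\in\mathcal E$ with $\rho(0)=0$ and $\rho'(0)=w$; such a curve exists because $\mathcal E$ is a $C^2$-manifold through $\rho\equiv 0$ by Theorem \ref{Theo: DB equilibria 5}. Differentiating the two equilibrium identities at $s=0$ and invoking $F'(0)=0$ and $G'(0)=0$ from hypothesis (H1) yields $Aw=F'(0)w=0$ and $Bw=G'(0)w=0$. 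Hence $w\in D(A_0)$ and $A_0 w=0$, that is, $w\in N(A_0)$, as desired.

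With the inclusion in place the dimension count finishes the proof as indicated above. As a consistency check one may note that the vectors $v^{(1)},v^{(2)},v^{(3)}$ appearing in $T_0\mathcal E$ are precisely the Jacobi-function basis of Lemma \ref{Lem: Jacobi vector}, so they lie in $\mathcal J(\Gamma^*)\subseteq N(A_0)$ by construction; the real content of the inclusion is therefore carried by $v^{(4)}$ and $v^{(5)}$, the tangent directions generated by varying the radius $r$ and the curvature-ratio angle $\gamma$, which supply exactly the two independent null directions with $c\neq 0$ anticipated in the proof of Lemma \ref{Lem: DB null < 5}. There is essentially no analytic obstacle here: all the labor has already been spent in Theorem \ref{Theo: DB equilibria 5} and Lemma \ref{Lem: DB null < 5}, and the single point requiring care is the justification of $T_0\mathcal E\subseteq N(A_0)$ via differentiation along curves of equilibria, which hinges entirely on the vanishing of the first derivatives of the nonlinearities $F$ and $G$ at the stationary state.
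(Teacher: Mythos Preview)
Your proof is correct and follows essentially the same approach as the paper: the paper also argues via the sandwich $5=\dim T_0\mathcal E\le\dim N(A_0)\le 5$, invoking Theorem \ref{Theo: DB equilibria 5} and Lemma \ref{Lem: DB null < 5}, with the inclusion $T_0\mathcal E\subseteq N(A_0)$ taken from \cite[equation (2.8)]{AbelsArabGarcke} rather than rederived. Your explicit derivation of this inclusion by differentiating along curves of equilibria and using $F'(0)=G'(0)=0$ is exactly what underlies that cited fact, so the arguments coincide.
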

\begin{proof}
It always holds$$
T_0 \mathcal E  \subseteq N(A_0), 
$$
see \cite[equation (2.8)]{AbelsArabGarcke}.
Thus, according to Theorem \ref{Theo: DB equilibria 5} and Lemma \ref{Lem: DB null < 5},
$$
5=\dim (T_0 \mathcal E) \leq \dim (N(A_0)) \leq 5 \,.
$$
It follows that $\dim (N(A_0)) = 5$ and moreover $T_0 \mathcal E = N(A_0)$.      
\end{proof}
\noindent
\subsubsection*{Variations  preserving areas and curvatures}\textbf{}
We  easily see, using formula \eqref{Eq: D useful one}, that
\begin{equation*}
\left \{ 
\begin{aligned}
\int_{\Gamma_1^*}\! {v^{(1)}}_1 &= \int_{\Gamma_2^*} \! {v^{(1)}}_2 = \int_{\Gamma_3^*} \! {v^{(1)}}_3 = -2 \frac{\sin (\gamma^* + \frac{\pi}{3})}{\kappa_1^*} \, \,, \\
\int_{\Gamma_1^*}\! {v^{(2)}}_1 &= \int_{\Gamma_2^*} \! {v^{(2)}}_2 = \int_{\Gamma_3^*} \! {v^{(2)}}_3 = 0 \,,\\
\int_{\Gamma_1^*} \!{v^{(3)}}_1 &= \int_{\Gamma_2^*} \!{v^{(3)}}_2 = \int_{\Gamma_3^*} \! {v^{(3)}}_3 =0.   
\end{aligned}
\right.
\end{equation*}\label{Inc: jacobi}
In other words,
\begin{equation}
  \mathcal J (\Gamma^*) \subseteq \mathcal F( \Gamma^* ) .
\end{equation}
By Lemma \ref{Lem: variation}, each of the $v^{(i)}$ $(i = 1,2,3)$     corresponds to a  first variation of $\Gamma^*$ which preserves the areas, and  the curvatures to first order.
Indeed, we have demonstrated in the proof of Theorem \ref{Theo: DB equilibria 5}  that $v^{(1)}$, $v^{(2)}$, $v^{(3)}$ correspond to the first variations of the double bubble $\Gamma^*$ associated with translation along $x$-axis, translation along $y$-axis and   rotation around the center of $\Gamma_1^*$, respectively.
\subsubsection*{Variations not preserving  areas and curvatures}
It is shown  in the proof of Theorem \ref{Theo: DB equilibria 5}  that $v^{(4)}$ corresponds to the first variations of the double bubble $\Gamma^*$ associated with uniform scaling (with the scale factor $ \frac{r}{r^*}$). 
Let  $A_i(r)$ denote the area of the regions $R_i(r)$ corresponding to  the  double bubble $DB_{r, \gamma^*, \theta^*}( a_1^*, a_2^*)$. Then
(see equation (3.1) in \cite{HutchingsMorganRitorAntonio})  
\begin{align}\label{Eq: derivative area DB}
\partial_r A_1  = \int_{\Gamma_1^*}\! {v^{(4)}}_1 
- \int_{\Gamma_2^*}\! {v^{(4)}}_2 > 0 \,, \quad \partial_r A_2  = \int_{\Gamma_2^*}\! {v^{(4)}}_2 - \int_{\Gamma_3^*}\! {v^{(4)}}_3 >0 \, 
\end{align}
according to Lemma \ref{Lem: v4 negative} (ii).

Again remember from the proof of Theorem \ref{Theo: DB equilibria 5}  that  $v^{(5)} $ corresponds to the first variation of $\Gamma^*$ with  respect to the angle $\gamma$,  that is  w.r.t.  the curvature ratio.
Similarly we  denote by   $A_i(\gamma)$  the area of the regions $R_i(\gamma)$ corresponding to  the  double bubble $DB_{r^{*}, \gamma, \theta^*}( a_1^*, a_2^*)$. Then
\begin{align}\label{Eq: derivative area DB gamma}
\partial_\gamma A_1  = \int_{\Gamma_1^*}\! {v^{(5)}}_1 
- \int_{\Gamma_2^*}\! {v^{(5)}}_2 > 0\,, \qquad \partial_\gamma A_2  = \int_{\Gamma_2^*}\! {v^{(5)}}_2 - \int_{\Gamma_3^*}\! {v^{(5)}}_3 < 0\, 
\end{align}
according to Lemma \ref{Lem: v5 negative} (ii).

We now define  the matrix 
\begin{align*}
D :=
\begin{pmatrix}
\partial_r A_1 & \partial_\gamma A_1 \\[13pt]
\partial_r A_2 & \partial_\gamma A_2 
\end{pmatrix}
=
\begin{pmatrix}
\displaystyle  \int_{\Gamma_1^*}\! {v^{(4)}}_1 
- \int_{\Gamma_2^*}\! {v^{(4)}}_2 &  \displaystyle  \int_{\Gamma_1^*}\! {v^{(5)}}_1 - \int_{\Gamma_2^*}\! {v^{(5)}}_2 \\[13pt]
\displaystyle  \int_{\Gamma_2^*}\! {v^{(4)}}_2 - \int_{\Gamma_3^*}\! {v^{(4)}}_3  & \ \int_{\Gamma_2^*}\! {v^{(5)}}_2 - \int_{\Gamma_3^*}\! {v^{(5)}}_3
\end{pmatrix}
.
\end{align*}
\begin{lem} \label{Lem: matrix D invert}
The matrix $D$ is invertible for each $0< \gamma^* < \frac{2 \pi}{3}$.
\end{lem}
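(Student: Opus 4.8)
The plan is to read off the invertibility of $D$ directly from its determinant, exploiting the sign pattern of its four entries that has already been recorded above. Concretely, I would expand
$$
\det D = (\partial_r A_1)(\partial_\gamma A_2) - (\partial_\gamma A_1)(\partial_r A_2)
$$
and argue that each of the two products has a definite sign, so that no cancellation between them can occur.

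First I would collect the four sign facts established in \eqref{Eq: derivative area DB} and \eqref{Eq: derivative area DB gamma} (equivalently, in Lemma \ref{Lem: v4 negative}(ii) and Lemma \ref{Lem: v5 negative}(ii)): namely $\partial_r A_1 > 0$, $\partial_r A_2 > 0$, $\partial_\gamma A_1 > 0$, and $\partial_\gamma A_2 < 0$, all valid throughout the range $0 < \gamma^* < \tfrac{2\pi}{3}$. Geometrically these encode that uniform enlargement (the $v^{(4)}$ direction) increases both enclosed areas, whereas increasing the curvature-ratio angle $\gamma$ (the $v^{(5)}$ direction) trades area between the two chambers, increasing $A_1$ while decreasing $A_2$.

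Next I would substitute these signs into the determinant expansion. The first product $(\partial_r A_1)(\partial_\gamma A_2)$ is a positive quantity times a negative quantity, hence negative; the second product $(\partial_\gamma A_1)(\partial_r A_2)$ is a product of two positive quantities, hence positive. Therefore
$$
\det D = \underbrace{(\partial_r A_1)(\partial_\gamma A_2)}_{<0} - \underbrace{(\partial_\gamma A_1)(\partial_r A_2)}_{>0} < 0
$$
for every $0 < \gamma^* < \tfrac{2\pi}{3}$. In particular $\det D \neq 0$, so $D$ is invertible, which is the assertion.

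Given the sign information, there is essentially no analytic obstacle: the whole argument reduces to the one-line determinant sign computation above. The genuine work sits upstream, in Lemma \ref{Lem: v4 negative} and Lemma \ref{Lem: v5 negative}, where the signs of the area derivatives are extracted from the explicit components of $v^{(4)}$ and $v^{(5)}$ by integration. The only point I would take care to verify here is that those sign statements do hold uniformly on the \emph{open} interval $(0, \tfrac{2\pi}{3})$, including the limiting configuration $\gamma^* = \tfrac{\pi}{3}$ where $\kappa_2^* = 0$, so that the conclusion $\det D < 0$ is valid for each admissible $\gamma^*$ as stated.
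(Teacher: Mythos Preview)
Your proposal is correct and follows essentially the same approach as the paper: compute $\det D = (\partial_r A_1)(\partial_\gamma A_2) - (\partial_\gamma A_1)(\partial_r A_2)$, invoke the sign information from \eqref{Eq: derivative area DB} and \eqref{Eq: derivative area DB gamma} to conclude $\det D < 0$, and deduce invertibility. The paper's own proof is just a terser version of what you wrote.
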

\begin{proof}
Let us  calculate its  determinant.  Inequalities  \eqref{Eq: derivative area DB} and \eqref{Eq: derivative area DB gamma}  imply
\begin{align*}
\det D &=\partial_r A_1 \partial_\gamma A_2 - \partial_\gamma A_1 \partial_r A_2 < 0
\end{align*}
Now as the determinant of the matrix $D$ is  strictly negative, we conclude that   the matrix $D$ is for each $0< \gamma^* < \frac{2 \pi}{3}$ invertible, \end{proof}
As a further  result of Lemma \ref{Lem: v4 negative} and \ref{Lem: v5 negative} (ii),    we get $v^{(4)}, v^{(5)} \notin \mathcal F( \Gamma^*)$. Therefore, we conclude from Lemma \ref{Lem: variation}  that  the corresponding variations do not preserve areas to first order. Indeed we  will show below in Lemma \ref{Lem: bilinear negative} that 
$$
I(u,u)<0 \qquad  \text{for } u=v^{(4)}, v^{(5)}\,.
$$
In addition  they do not preserve the curvatures to first order too as the constant vectors $c(v^{(4)})$ and $c(v^{(5)})$ are nonzero.
\subsection{Semi-simplicity} \label{Sec: semi-simplicity2}
We  need to show    two small propositions.
The first one is stated and proved in the proof of Lemma 3.8 in \cite{HutchingsMorganRitorAntonio}. 
\begin{pr} \label{Prop: bilinear zero}
If $u \in \mathcal F (\Gamma^*)$ satisfies $I(u,u)= 0$,   then 
$$
I(u,v) = 0 \qquad \forall \, v \in \mathcal F(\Gamma^*)\,.
$$ 
\end{pr}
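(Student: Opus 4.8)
The plan is to exploit the fact that $I(\,\cdot\,,\,\cdot\,)$, regarded as the symmetric bilinear form obtained by polarizing the quadratic expression in \eqref{def: D second variation}, is positive semidefinite on the linear space $\mathcal F(\Gamma^*)$. This positivity is precisely the content of variational stability established in Corollary \ref{Cor: D stable}, since $\Gamma^*$ is itself a standard planar double bubble. Once this is in hand, the statement reduces to the standard assertion that a nonnegative symmetric bilinear form vanishes identically in its first slot whenever the associated quadratic form vanishes there.

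First I would record that $\mathcal F(\Gamma^*)$ is a linear subspace of $H^1(\Gamma^*)$: both defining conditions, namely $u_1+u_2+u_3=0$ on $\Sigma^*$ and $\int_{\Gamma_1^*} u_1 = \int_{\Gamma_2^*} u_2 = \int_{\Gamma_3^*} u_3$, are linear constraints. Consequently, for any $v \in \mathcal F(\Gamma^*)$ and any $t \in \mR$, the combination $u + t v$ again belongs to $\mathcal F(\Gamma^*)$, so Corollary \ref{Cor: D stable} applies to it and yields $I(u+tv, u+tv) \geq 0$.

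Next I would expand, using the bilinearity and symmetry of the polarized form $I$,
\[
0 \leq I(u + t v, u + t v) = I(u,u) + 2 t\, I(u,v) + t^2 I(v,v) \,.
\]
By hypothesis $I(u,u) = 0$, so this simplifies to $2 t\, I(u,v) + t^2 I(v,v) \geq 0$ for every $t \in \mR$. If $I(u,v)$ were nonzero, then choosing $t$ small and of sign opposite to $I(u,v)$ would make the linear term dominate the quadratic one and render the right-hand side negative, a contradiction. Hence $I(u,v) = 0$, and since $v \in \mathcal F(\Gamma^*)$ was arbitrary, the proof is complete.

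There is essentially no obstacle here beyond recognizing this convexity argument; the only point deserving a line of care is confirming that the map $u \mapsto I(u,u)$ of \eqref{def: D second variation} genuinely polarizes to a symmetric bilinear form so that the expansion above is valid. This is immediate because every term in \eqref{def: D second variation} is quadratic with a manifestly symmetric polarization, namely the interior inner products $\nabla_{\Gamma_i^*} u_i \cdot \nabla_{\Gamma_i^*} v_i$ and $(\kappa_i^*)^2 u_i v_i$ together with the boundary products $q_i^* u_i v_i$ over $\Sigma^*$.
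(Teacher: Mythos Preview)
Your proof is correct and follows essentially the same approach as the paper: both exploit the positive semidefiniteness of $I$ on the linear space $\mathcal F(\Gamma^*)$ (Corollary \ref{Cor: D stable}) and expand the quadratic form on a one-parameter family. The only cosmetic difference is that the paper expands $I(v+tu,v+tu)=I(v,v)+2tI(u,v)$, so the $t^2$ term drops out immediately from the hypothesis $I(u,u)=0$ and the contradiction comes from letting $|t|\to\infty$ rather than from the small-$t$ argument you give.
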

\begin{proof}
According to Corollary \ref{Cor: D stable},   $  I(v + tu, v + tu ) \geq 0 $ for all $v \in \mathcal F(\Gamma^*)$ and $t \in \mR$. 
Therefore  
\begin{align*}
I(v+tu, v+ tu) &= I (v,v) + 2t I(u,v) + t^2 I(u,u)\\
               &= I (v,v) + 2t I(u,v)\,.  
\end{align*}
This forces $I(u,v) = 0$ as $t$ can take arbitrary negative values.  
\end{proof}
\begin{pr} \label{Prop: range and mean}
Let $z \in R(A_0)$. Then there exists $u \in  \mathcal F( \Gamma^*) \cap D(A_0)$ such that $A u = z$.
\end{pr}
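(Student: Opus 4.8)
The plan is to start from an arbitrary preimage of $z$ under $A_0$ and to correct it by a suitable element of the null space so that the corrected function lands in $\mathcal F(\Gamma^*)$. Since $z \in R(A_0)$, there is some $w \in D(A_0)$ with $A_0 w = z$. By the very definition of $D(A_0)$ the function $w$ satisfies all the boundary conditions of \eqref{Eq: D linear BCs 1}, in particular the first one, $w_1 + w_2 + w_3 = 0$ on $\Sigma^*$, which is precisely the first defining condition of $\mathcal F(\Gamma^*)$. Hence the only possible obstruction to $w \in \mathcal F(\Gamma^*)$ is the integral (area) condition $\int_{\Gamma_1^*} w_1 = \int_{\Gamma_2^*} w_2 = \int_{\Gamma_3^*} w_3$, and the whole task reduces to killing this defect without spoiling $A_0 w = z$.

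To quantify the defect I would introduce the two linear functionals
\[
m_1(u) = \int_{\Gamma_1^*} u_1 - \int_{\Gamma_2^*} u_2, \qquad m_2(u) = \int_{\Gamma_2^*} u_2 - \int_{\Gamma_3^*} u_3,
\]
so that a function obeying $u_1+u_2+u_3 = 0$ on $\Sigma^*$ lies in $\mathcal F(\Gamma^*)$ if and only if $m_1(u) = m_2(u) = 0$. The key observation is that the matrix $D$ of Lemma \ref{Lem: matrix D invert} is exactly
\[
D = \begin{pmatrix} m_1(v^{(4)}) & m_1(v^{(5)}) \\ m_2(v^{(4)}) & m_2(v^{(5)}) \end{pmatrix},
\]
that is, it records the action of $(m_1,m_2)$ on the two area-changing null directions $v^{(4)}, v^{(5)}$. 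Since $v^{(1)}, v^{(2)}, v^{(3)} \in \mathcal F(\Gamma^*)$ (see Section \ref{Sec: gem interp}), these contribute nothing to $m_1, m_2$, so the only relevant degrees of freedom in the null space are the two spanned by $v^{(4)}, v^{(5)}$.

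Because $v^{(4)}, v^{(5)} \in N(A_0)$ by Corollary \ref{cor: null and manifold}, adding multiples of them to $w$ neither changes $A_0 w = z$ nor destroys the condition $u_1+u_2+u_3 = 0$ on $\Sigma^*$ (they also satisfy the first boundary condition). Since $D$ is invertible by Lemma \ref{Lem: matrix D invert}, I can solve the $2\times 2$ linear system
\[
D \begin{pmatrix} s \\ t \end{pmatrix} = \begin{pmatrix} m_1(w) \\ m_2(w) \end{pmatrix}
\]
and set $u := w - s\, v^{(4)} - t\, v^{(5)}$. Then $u \in D(A_0)$ and $A_0 u = z$, the condition $u_1+u_2+u_3=0$ on $\Sigma^*$ holds, and by linearity of $m_1, m_2$ together with the choice of $(s,t)$ we get $m_1(u) = m_2(u) = 0$. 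Consequently $u \in \mathcal F(\Gamma^*) \cap D(A_0)$, as required.

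The entire argument is routine linear algebra once one has the invertibility of $D$; that invertibility, proved in Lemma \ref{Lem: matrix D invert} via the strict sign of $\det D$ coming from the area derivatives, is the genuine input and the place where the geometry enters (uniform scaling and the curvature-ratio variation really do change the two enclosed areas). Everything else — producing $w$, and checking that null-space corrections preserve both $A_0 u = z$ and the contact condition — is immediate from the definitions, so I do not expect any serious obstacle beyond correctly identifying $D$ with the defect functionals $m_1, m_2$.
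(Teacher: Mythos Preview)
Your proof is correct and follows essentially the same approach as the paper: take any preimage in $D(A_0)$ and correct it by a linear combination of the two area-changing null directions $v^{(4)}, v^{(5)}$, using the invertibility of the matrix $D$ from Lemma \ref{Lem: matrix D invert} to solve for the coefficients. The only cosmetic differences are that the paper adds $\alpha v^{(4)} + \beta v^{(5)}$ rather than subtracting, and does not introduce your defect functionals $m_1, m_2$ by name.
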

\begin{proof}
Clearly, there exists $ \widetilde u \in D(A_0) $ such that $ A \widetilde u = z $.
 The actual task is to find  two  constants $\a( \widetilde u) $, $\beta(\widetilde u)$  such that $$
u:=  \widetilde u + \a( \widetilde u) v^{(4)} + \beta( \widetilde u) v^{(5)} 
$$
 satisfies  
 $$
 \int_{\Gamma^*_1} \! u_1 = \int_{\Gamma_2^*} \! u_2 = \int_{\Gamma_3^*} \! u_3 \,. 
 $$
(This  will finish the  proof since  $v^{(4)}, v^{(5)} \in N(A_0)$ implies $Au = A \widetilde u = z$.) To do so,  let us recast this integral constraint into the matrix form   $$
 D 
\begin{pmatrix}
\displaystyle \a (\widetilde u) \\[13pt]
\displaystyle \beta (\widetilde u)
\end{pmatrix}
= 
\begin{pmatrix}
\displaystyle\int_{\Gamma_1^*} \! {\widetilde u}_2 - \int_{\Gamma_2^*} \! {\widetilde u}_1  \\[13pt]
\displaystyle\int_{\Gamma_1^*} \! \widetilde u_3 - \int_{\Gamma_3^*} \! \widetilde u_2 
\end{pmatrix}\,,
 $$ 
where the matrix $D$ is given above.
The invertibility of this matrix proved in Lemma \ref{Lem: matrix D invert} finishes the
proof.\end{proof}
We are now ready to prove:
\begin{lem}
The eigenvalue $0$ of $A_0$ is  semi-simple.
\end{lem}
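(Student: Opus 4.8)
The plan is to show that the eigenvalue $0$ of $A_0$ is semi-simple by establishing the direct sum decomposition $X = R(A_0) \oplus N(A_0)$. Since $\dim N(A_0) = 5$ is finite (Corollary \ref{cor: null and manifold}) and $0$ is an isolated eigenvalue consisting only of point spectrum, it suffices to prove that $N(A_0) \cap R(A_0) = \{0\}$; the complementary dimension count will then force the decomposition. Equivalently, the standard characterization of semi-simplicity is that $N(A_0) = N(A_0^2)$, so I would aim to show that no generalized eigenvector of rank two exists, i.e., there is no $u$ with $A_0 u \in N(A_0) \setminus \{0\}$.

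First I would take $u \in N(A_0) \cap R(A_0)$ and derive a contradiction unless $u = 0$. Since $u \in N(A_0)$, Lemma \ref{Lem: null bilinear} gives $I(u,u) = -\sum_i c_i \int_{\Gamma_i^*} u_i$ with $\sum_i c_i = 0$, where $c = c(u)$ is determined by \eqref{Eq: linear ci}. Since $u \in R(A_0)$, Proposition \ref{Prop: range and mean} provides $w \in \mathcal F(\Gamma^*) \cap D(A_0)$ with $A_0 w = u$. The key computation is to evaluate $I(u,u)$ in two ways: on one hand using the null-space structure, and on the other by pairing $u = A_0 w$ against $u$ itself. The natural bridge is the symmetry of the bilinear form $I(\cdot,\cdot)$ together with the identity \eqref{Eq: D bilinear}. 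Because $w \in \mathcal F(\Gamma^*)$ and $u \in \mathcal F(\Gamma^*)$ (the latter since any element of $N(A_0) \cap R(A_0)$ inherits the mean-value condition via the last boundary condition exactly as in the proof of Lemma \ref{Lem: spectrum negative}), I expect to use Corollary \ref{Cor: null bilinear} to conclude $I(u,u) = 0$, and then invoke Proposition \ref{Prop: bilinear zero} to upgrade this to $I(u,v) = 0$ for all $v \in \mathcal F(\Gamma^*)$.

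The crux is then to show $I(u,u) = 0$ together with $u \in N(A_0)$ forces $u = 0$ \emph{modulo the Jacobi functions}, and to handle the Jacobi directions separately. Concretely, from $I(u,u) = 0$ and the argument already used in Lemma \ref{Lem: spectrum negative}, the minimizing property yields $\Delta_{\Gamma_i^*} u_i + (\kappa_i^*)^2 u_i = c_i'$ for constants; combined with $u \in N(A_0)$ this pins down $u$ to lie in the span of the five basis vectors $v^{(1)}, \dots, v^{(5)}$. The decisive point is that a genuine rank-two generalized eigenvector would have to pair nontrivially against the non-area-preserving directions $v^{(4)}, v^{(5)}$, on which $I$ is \emph{negative} (as asserted in Lemma \ref{Lem: bilinear negative}); the invertibility of the area-derivative matrix $D$ from Lemma \ref{Lem: matrix D invert} guarantees that the area constraints cannot be satisfied by a nonzero combination of $v^{(4)}, v^{(5)}$ lying in $\mathcal F(\Gamma^*)$. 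This is precisely where the non-negativity of the second variation, through the space $\mathcal F(\Gamma^*)$ and the matrix $D$, does the essential work.

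The main obstacle I anticipate is the bookkeeping that connects $u \in R(A_0)$ to the vanishing of $I(u,u)$: one must carefully verify that the preimage $w$ can be chosen in $\mathcal F(\Gamma^*)$ (this is exactly what Proposition \ref{Prop: range and mean} is engineered to supply) and that the boundary terms arising when integrating by parts in the pairing $\langle A_0 w, u\rangle$ reproduce $I(w,u)$ with no leftover contributions — the relevant boundary terms vanish by the weak angle condition \eqref{Eq: DB weak angle}, exactly as in the derivation of \eqref{def: D second variation}. Once $I(u,u) = 0$ is secured, the finite-dimensional linear algebra using $D$ and the negativity of $I$ on $\mathrm{span}\{v^{(4)}, v^{(5)}\}$ closes the argument and yields $R(A_0) \oplus N(A_0) = X$, which is the assertion of semi-simplicity.
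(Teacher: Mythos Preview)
Your overall strategy is correct and matches the paper's: reduce to $N(A_0)\cap R(A_0)=\{0\}$, take an element there (you call it $u$, the paper calls it $z$), use Proposition~\ref{Prop: range and mean} to get a preimage $w\in \mathcal F(\Gamma^*)\cap D(A_0)$, observe $u\in\mathcal F(\Gamma^*)$, apply Corollary~\ref{Cor: null bilinear} to get $I(u,u)=0$, and then Proposition~\ref{Prop: bilinear zero} to get $I(u,w)=0$. Up to this point you are on exactly the right track.

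The gap is in your final step. Your ``finite-dimensional linear algebra'' argument using the matrix $D$ and the negativity of $I$ on $\mathrm{span}\{v^{(4)},v^{(5)}\}$ only shows that $u\in N(A_0)\cap\mathcal F(\Gamma^*)=\mathcal J(\Gamma^*)=\mathrm{span}\{v^{(1)},v^{(2)},v^{(3)}\}$; it does \emph{not} give $u=0$. The Jacobi functions are genuine nonzero elements of $N(A_0)\cap\mathcal F(\Gamma^*)$, so nothing in that paragraph rules them out, and the condition $u\in R(A_0)$ has not yet been exploited beyond producing $w$. What actually closes the argument is the identity you allude to but never deploy: testing $A_0 w=u$ against $-(\Delta_{\Gamma_i^*} w_i+(\kappa_i^*)^2 w_i)$ and integrating by parts (exactly as in the derivation of \eqref{Eq: D bilinear}) yields
\[
\sum_{i=1}^3\int_{\Gamma_i^*}\big|\nabla_{\Gamma_i^*}\big(\Delta_{\Gamma_i^*} w_i+(\kappa_i^*)^2 w_i\big)\big|^2\,\mathrm ds=I(u,w).
\]
Since $I(u,w)=0$, the left-hand side forces $w\in N(A_0)$, and therefore $u=A_0 w=0$. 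This is the paper's identity \eqref{Eq: range and bilinear}, and it is the step that makes essential use of $u\in R(A_0)$; your linear-algebra detour via $D$ and Lemma~\ref{Lem: bilinear negative} is superfluous here and cannot by itself eliminate the Jacobi directions.
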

\begin{proof}
Since the  operator
$A_0$ has a compact resolvent, the semi-simplicity condition
is equivalent to the condition that $N(A_0) = N(A_0^2)$ (use the spectral theory
of compact operators, e.g. see [4, Section 9.9]). In other words, it suffices  to check that 
 $$
 R(A_0) \cap N(A_0) = \{ 0 \}\,.
 $$
To prove this, let $z \in R(A_0) \cap N(A_0)\,\big( \subset D(A_0) \big)$. 
 According to  Proposition \ref{Prop: range and mean} there exists $u \in \mathcal   D(A_0) \cap \mathcal F( \Gamma^* ) $ such that 
$
A u = z 
$.
From this, exactly as  done in  Section \ref{Sec. DB spectrum}, we derive the identity   
\begin{align} \label{Eq: range and bilinear}
 \sum_{i =1}^3  \int_{\Gamma_i^*} \big|\nabla_{\Gamma_i^*} \big(  \Delta_{\Gamma_i^*} u_i + (\kappa ^*_i)^2 u_i \big) \big|^2  \mathrm d s
&=  I(z,u) \,,
\end{align}
where we  used only the facts that $u, z \in D(A_0)$.

Moreover, similarly as before, an  integration    and application of the divergence theorem using the fact that $u \in D(A_0)$ gives
\begin{equation*}
\int_{\Gamma^*_1} \!z_1 = \int_{\Gamma_2^*}\! z_2 = \int_{\Gamma_3^*} \! z_3 \,,
\end{equation*}
and so $z \in \mathcal F ( \Gamma^*)$.

Now since $z \in N(A_0) \cap  \mathcal F( \Gamma^*)$,  Corollary \ref{Cor: null bilinear}  tells us   $I(z,z) = 0$.  Therefore, according to Proposition \ref{Prop: bilinear zero}, 
$$
I(z,u) =0 
$$
as $u \in \mathcal F( \Gamma^*)$. In view of the identity \eqref{Eq: range and bilinear}, we obtain  $u \in N(A_0)$. Consequently  $z=Au=0$, which finishes the proof.    
\end{proof}

\begin{rem}
The main ingredient in the proof is the positivity of the bilinear form, i.e., the variational stability of the  stationary solution.\end{rem}
\section{Standard planar double bubbles are stable} \label{Sec: main theo}
Summing up, we have shown that  all the hypotheses  of Theorem \ref{theo01} are satisfied. Thereby   applying  Theorem \ref{theo01} we conclude:
\begin{theo}\label{main theorem example}
Let $\Gamma^*$  be a standard planar double bubble. Then $\rho^*\equiv 0$ is a stable equilibrium of \eqref{Eq: D nonlinear semifinal} in $X_1 = C^{4 + \a }(\o \,,\, \mR^3)$.
 Moreover, if $\rho^0$ is sufficiently close to $\rho^* \equiv 0$ in $X_1$ and satisfies the corresponding compatibility conditions \eqref{compatibility}, then the problem \eqref{Eq: D nonlinear semifinal} has a unique solution 
$$
 \rho \in C^{1+\frac{\alpha}{2m}, 2m+\alpha} \big( [0, \infty)\times\o \,,\, \mR^3 \big)
$$
and approaches some  $\rho^\infty\in \mathcal{E}$, parameterizing some standard planar double bubble, exponentially fast in $X_1$ as $t \to \infty$.
\end{theo}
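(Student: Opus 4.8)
The plan is to verify that the fully nonlinear, nonlocal system \eqref{Eq: D nonlinear semifinal} fits the abstract template \eqref{eq1} and satisfies every hypothesis of the generalized principle of linearized stability, Theorem \ref{theo01}, and then simply invoke that theorem at the stationary solution $u^* = \rho^* \equiv 0$. First I would recall that, after the affine change of variables introduced in Section \ref{Sec: general setting}, the problem is already written in the form \eqref{eq1} with the explicit operators $A$, $B_1,\dots,B_6$ and nonlinearities $F$, $G_1,\dots,G_6$ displayed there. The structural hypotheses are then checked one by one: (H1) and (H2) follow from the smoothness of the composition operators defining $F$ and $G_j$; strong parabolicity is immediate from the positive diagonal principal part; the Lopatinskii--Shapiro condition is obtained from the energy estimate for the model ODE system \eqref{Eq: D LS}; and the normality condition \eqref{normality condition} follows from surjectivity of the four boundary matrices. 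All of this has already been carried out in Section \ref{Sec: general setting}, together with the equivalence of the abstract compatibility condition \eqref{compatibility} to the geometric one \eqref{Con: sum laplace curvature}.

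It then remains to establish the four normal stability conditions (i)--(iv), each of which has been proved in the preceding sections. Condition (i), that $\mathcal E$ is locally a five-dimensional $C^2$-manifold in $X_1$, is Theorem \ref{Theo: DB equilibria 5}; condition (ii), that $T_0 \mathcal E = N(A_0)$, is Corollary \ref{cor: null and manifold}; condition (iii), semi-simplicity of the eigenvalue $0$, is the lemma of Section \ref{Sec: semi-simplicity2}; and condition (iv), $\sigma(A_0)\setminus\{0\}\subset \mathbb{C}_+$ (indeed $\subset \mR_+$), is Lemma \ref{Lem: spectrum negative}. With every hypothesis in hand, Theorem \ref{theo01} yields both stability of $\rho^*\equiv 0$ in $X_1$ and, for compatible initial data close to $\rho^*$, global existence of a unique solution in the parabolic H\"older spaces converging exponentially to some $\rho^\infty\in\mathcal E$. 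Since $\mathcal E$ consists precisely of parameterizations of standard planar double bubbles (Section \ref{Sec: D manifold}), $\rho^\infty$ parameterizes such a bubble, which completes the statement.

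The genuinely hard inputs are the two spectral conditions (iii) and (iv), and these rest decisively on variational stability, $I(u,u)\geq 0$ for all $u\in\mathcal F(\Gamma^*)$ (Corollary \ref{Cor: D stable}). For (iv) one uses the identity \eqref{Eq: D bilinear} relating the Dirichlet-type energy to $\lambda\, I(u,u)$: knowing that $\lambda$ is real (Remark \ref{Rem: real}), that $u\in\mathcal F(\Gamma^*)$ after integrating the eigenvalue equation, and that $I(u,u)>0$ strictly (since $I(u,u)=0$ would force $u\in N(A_0)$ and hence $\lambda=0$) together force $\lambda>0$. For (iii) one reduces semi-simplicity to $R(A_0)\cap N(A_0)=\{0\}$ and uses Proposition \ref{Prop: range and mean} to correct any $z$ in the range to an element $u\in\mathcal F(\Gamma^*)\cap D(A_0)$ with $Au=z$; variational stability through Proposition \ref{Prop: bilinear zero} then gives $I(z,u)=0$, whence \eqref{Eq: range and bilinear} forces $u\in N(A_0)$ and $z=Au=0$. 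Thus the single geometric fact that the standard double bubble minimizes perimeter, transported into the PDE setting via the second-variation form $I$, is what unlocks both delicate conditions; I would expect coordinating these two spectral arguments around the invertibility of the area-derivative matrix $D$ (Lemma \ref{Lem: matrix D invert}) to be the crux of the whole proof.
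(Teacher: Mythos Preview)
Your proposal is correct and follows exactly the paper's approach: the proof consists of verifying that the problem fits the abstract setting of Theorem \ref{theo01}, checking the structural hypotheses in Section \ref{Sec: general setting}, and establishing the four normal stability conditions via Theorem \ref{Theo: DB equilibria 5}, Corollary \ref{cor: null and manifold}, the semi-simplicity lemma, and Lemma \ref{Lem: spectrum negative}, before invoking Theorem \ref{theo01}. Your write-up is in fact more detailed than the paper's own proof, which simply says that all hypotheses have been verified and applies Theorem \ref{theo01}; the third paragraph recapitulating the spectral arguments is helpful commentary but not required for the proof itself.
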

In this sense, the standard planar double bubble $\Gamma^*$ is  stable under the surface diffusion flow. In addition, every planar double bubble that starts sufficiently close to $\Gamma^*$ and satisfies the angle, curvature,  balance of flux condition and   the condition on the Laplace of the curvatures, see \eqref{Con: sum laplace curvature}, at $t = 0$  exists globally and converges to some standard planar double bubble, enclosing the same areas as its initial data, at an exponential rate as $t\rightarrow \infty$.
 We illustrate this result in Figure \ref{Fig: stability}. 
\subsection{General area preserving gradient flows}\label{Sec: general flow} 
It is to be expected that for any sufficiently smooth  area preserving gradient flow  
\begin{equation*}
V = - \mathrm{grad}_{\mathcal H(\Gamma)} \mathrm{Length}
\end{equation*}
one  obtains  the following identity
\begin{align} \label{Eq: C general bilinear}
\norm{z}_{\mathcal H({\Gamma}^*)}^2= { I(z,u)}_{}^{}\,,  
\end{align}
where
$ z := \delta \big(\mathrm{grad}_{\mathcal H(\Gamma)} 
\mathrm{Length \big)}(u) $.
Here $\mathcal H(\Gamma)$ denotes a (pre-)Hilbert manifold with some area constraints. 

In particular,   if $u$ is a eigenvector of the operator $\delta \big(\mathrm{grad}_{\mathcal H(\Gamma)} 
\mathrm{Length \big)}$ with respect to the eigenvalue $\lambda$, then we get 
\begin{align} \label{Eq: C2 general bilinear}
  { \Big\|\delta \big(\mathrm{grad}_{\mathcal H(\Gamma)} 
\mathrm{Length \big)}(u)\Big \|}_{\mathcal H(\Gamma^*)}^2 = \lambda I(u,u)\,. \qquad 
\end{align}
 
Comparing the   identifies \eqref{Eq: C2 general bilinear} and \eqref{Eq: C general bilinear} with the identities \eqref{Eq: D bilinear} and \eqref{Eq: range and bilinear} respectively, we expect that our approach can be used for other area preserving gradient flows. Therefore we   conjecture that

\begin{conj} \label{Conj}
  Standard planar double bubbles are stable under  sufficiently smooth    area preserving gradient flows.
\end{conj}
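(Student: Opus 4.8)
The plan is to run the same program that established stability under surface diffusion, replacing the concrete $H^{-1}$-structure by the abstract inner product of the (pre-)Hilbert manifold $\mathcal{H}(\Gamma)$ and exploiting the crucial fact that the \emph{variational} stability $I(u,u)\geq 0$, proved in Corollary \ref{Cor: D stable}, is entirely flow-independent. The first step is to pin down the admissible class of flows: one fixes a smoothly varying family of inner products on an appropriate space of normal variations (with the two area constraints built in), writes the flow as $V=-\mathrm{grad}_{\mathcal{H}(\Gamma)}\mathrm{Length}$, parameterizes the evolving bubbles over the stationary $\Gamma^*$ exactly as in Section \ref{Sec: PDE and Linear}, and linearizes around $\rho\equiv 0$. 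Since the equilibria of such a flow are precisely the critical points of length subject to the area constraints, they are again exactly the standard planar double bubbles; hence Theorem \ref{Theo: DB equilibria 5} and Corollary \ref{cor: null and manifold} carry over verbatim, settling hypotheses (i) and (ii) of Theorem \ref{theo01} with no extra work.

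Next I would establish the key identity \eqref{Eq: C general bilinear}, namely $\|z\|_{\mathcal{H}(\Gamma^*)}^2=I(z,u)$ with $z=\delta(\mathrm{grad}_{\mathcal{H}(\Gamma)}\mathrm{Length})(u)$, which is the abstract replacement for the concrete computations \eqref{Eq: D bilinear} and \eqref{Eq: range and bilinear}. This identity encodes the twofold role of $I$: it is simultaneously the second variation of length (Proposition \ref{Prop: second variation formula}) and the Dirichlet-type form controlling the linearized operator. With it in hand, hypothesis (iv) follows as in Lemma \ref{Lem: spectrum negative}: an eigenvector $u$ with eigenvalue $\lambda$ satisfies $\|\delta(\mathrm{grad})(u)\|^2=\lambda\,I(u,u)$ by \eqref{Eq: C2 general bilinear}, the area-preservation of the flow forces $u\in\mathcal{F}(\Gamma^*)$ whenever $\lambda\neq 0$, and then variational stability gives $I(u,u)\geq 0$, whence $\lambda>0$ unless $I(u,u)=0$, which in turn returns $u\in N(A_0)$, i.e. $\lambda=0$. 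Semi-simplicity (iii) is obtained by copying Section \ref{Sec: semi-simplicity2}: one checks $R(A_0)\cap N(A_0)=\{0\}$ using \eqref{Eq: C general bilinear}, Proposition \ref{Prop: bilinear zero}, and the surjectivity argument of Proposition \ref{Prop: range and mean}, whose only flow-dependent input is the invertibility of the area-derivative matrix $D$ from Lemma \ref{Lem: matrix D invert}.

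Having verified normal stability, the conclusion would follow by invoking Theorem \ref{theo01}, provided the resulting abstract PDE meets its structural hypotheses. This last point is where the real difficulty lies, and it is the reason the statement is phrased as a conjecture rather than a theorem. For a general $\mathcal{H}(\Gamma)$ one must first determine the order of the resulting parabolic system (for $H^{-1}$ it is fourth order, but a different metric produces a different order and different boundary operators), and then re-verify strong parabolicity, the Lopatinskii--Shapiro condition, the normality condition \eqref{normality condition}, and the regularity hypotheses (H1), (H2) in this generality. The nonlocal coupling forced by the triple junctions, together with the nonlocal terms stemming from the global area constraints, makes these conditions delicate to check uniformly over a whole class of metrics. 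I therefore expect the main obstacle to be not the spectral or geometric analysis --- which \eqref{Eq: C general bilinear} reduces to the flow-independent positivity of $I$ --- but rather isolating a precise, checkable notion of \emph{sufficiently smooth area preserving gradient flow} for which the parabolic machinery of Theorem \ref{theo01} genuinely applies.
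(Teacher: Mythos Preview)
The paper does not prove this statement: it is explicitly labeled a \emph{Conjecture}, and the surrounding text in Section~\ref{Sec: general flow} offers only the heuristic identities \eqref{Eq: C general bilinear}--\eqref{Eq: C2 general bilinear}, the remark that ``we expect that our approach can be used for other area preserving gradient flows,'' and the closing line ``It would be desirable to analyze the problem systematically.'' There is no proof to compare against.

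Your proposal is not a proof either, but a strategy sketch, and in that sense it faithfully expands on exactly the heuristic the paper itself suggests: derive the abstract analogue of \eqref{Eq: D bilinear} and \eqref{Eq: range and bilinear}, then rerun the spectral and semi-simplicity arguments of Sections~\ref{Sec. DB spectrum} and~\ref{Sec: semi-simplicity2}, which rest only on the flow-independent positivity of $I$. You also correctly isolate the genuine obstruction---verifying the structural hypotheses of Theorem~\ref{theo01} (order of the system, strong parabolicity, Lopatinskii--Shapiro, normality, (H1)--(H2)) for a general metric---and you rightly note that this is precisely why the statement is a conjecture rather than a theorem. So your diagnosis matches the paper's own stance; there is nothing further the paper proves here.
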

It would be desirable to analyze the problem systematically.

\begin{appendices}
\section{More about the bilinear form $I(\,,\,)$}\label{Sec: more about bilinear}
\begin{lem} 
Within the class of functions $u$ satisfying the linearized angle condition, we have
\begin{align*}
\big \{ \, u : I(u,u)= 0 \,\big\} \cap \mathcal F(\Gamma^*)=  N(A_0) \cap \mathcal F(\Gamma^*).
\end{align*}
\end{lem}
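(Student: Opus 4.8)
The plan is to establish both inclusions, the inclusion $\supseteq$ being immediate and the reverse $\subseteq$ carrying the real content. For $\supseteq$, if $u\in N(A_0)\cap\mathcal F(\Gamma^*)$ then Corollary \ref{Cor: null bilinear} gives $I(u,u)=0$ directly, so $u$ lies in the left-hand set; note that every element of $N(A_0)$ automatically satisfies the linearized angle condition by \eqref{Eq: linear B ci}, so this is consistent with the ambient class.

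For the reverse inclusion I would begin with $u\in\mathcal F(\Gamma^*)$ satisfying the linearized angle condition and $I(u,u)=0$, and first apply Proposition \ref{Prop: bilinear zero}, which rests on the variational stability from Corollary \ref{Cor: D stable}, to upgrade this to $I(u,v)=0$ for every $v\in\mathcal F(\Gamma^*)$. Writing $I(u,v)$ in its integration-by-parts form, namely the bilinear analogue of the second line of \eqref{def: D second variation},
$$
I(u,v)=-\sum_{i=1}^3\int_{\Gamma_i^*}\big(\Delta_{\Gamma_i^*}u_i+(\kappa_i^*)^2u_i\big)\overline{v_i}\,\mathrm{d}s+\sum_{i=1}^3\int_{\Sigma^*}\big(\partial_{n_{\partial\Gamma_i^*}}u_i+q_i^*u_i\big)\overline{v_i}\,,
$$
the boundary term vanishes: the linearized angle condition makes $\partial_{n_{\partial\Gamma_i^*}}u_i+q_i^*u_i$ independent of $i$ at each point of $\Sigma^*$, and factoring it out against $\sum_i v_i=0$ on $\Sigma^*$ (valid since $v\in\mathcal F(\Gamma^*)$) gives zero, exactly as in \eqref{Eq: DB weak angle}. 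With $w_i:=\Delta_{\Gamma_i^*}u_i+(\kappa_i^*)^2u_i$ this leaves
$$
\sum_{i=1}^3\int_{\Gamma_i^*}w_i\,\overline{v_i}\,\mathrm{d}s=0\qquad\forall\,v\in\mathcal F(\Gamma^*)\,.
$$

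The concluding step is to read off from this orthogonality that each $w_i$ is constant. I would test against those $v$ whose components are smooth and compactly supported away from the triple junctions, so that the constraint $\sum_i v_i=0$ on $\Sigma^*$ is vacuous, and which satisfy $\int_{\Gamma_1^*}v_1=\int_{\Gamma_2^*}v_2=\int_{\Gamma_3^*}v_3$. Such functions are $L^2$-dense in the closed subspace defined by this single integral constraint, whose orthogonal complement is $\mathrm{span}\{(1,-1,0),(0,1,-1)\}$ (constant functions on the respective arcs). Hence $w=(w_1,w_2,w_3)$ lies in this span, which forces $w_i=c_i$ to be constants with $c_1+c_2+c_3=0$. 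Combined with $\sum_i u_i=0$ on $\Sigma^*$ and the linearized angle condition, this is precisely the characterization \eqref{Eq: linear ci}--\eqref{Eq: linear B ci} of $u\in N(A_0)$, so $u\in N(A_0)\cap\mathcal F(\Gamma^*)$.

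The step I expect to be the main obstacle is this final extraction: one must pin down the orthogonal complement of the integral-constraint subspace and verify the density of the interior-supported test functions, and one must also ensure enough regularity for $w_i=\Delta_{\Gamma_i^*}u_i+(\kappa_i^*)^2u_i$ to be a genuine $L^2$ function rather than a mere distribution, which is why the ambient class should be understood to consist of sufficiently smooth functions satisfying the linearized angle condition.
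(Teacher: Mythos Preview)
Your proof is correct and follows essentially the same route as the paper. The paper's own argument is extremely brief: for the inclusion $\supseteq$ it cites Corollary \ref{Cor: null bilinear} just as you do, and for $\subseteq$ it simply invokes Lemma~3.8 of \cite{HutchingsMorganRitorAntonio} together with the assumed linearized angle condition. What you have done is unpack that external reference explicitly, using Proposition~\ref{Prop: bilinear zero} (itself extracted from that same proof in \cite{HutchingsMorganRitorAntonio}) to pass from $I(u,u)=0$ to $I(u,v)=0$ for all $v\in\mathcal F(\Gamma^*)$, and then carrying out the orthogonality argument to conclude $\Delta_{\Gamma_i^*}u_i+(\kappa_i^*)^2u_i=c_i$ with $\sum_i c_i=0$, which via \eqref{Eq: linear ci}--\eqref{Eq: linear B ci} is exactly the characterization of $N(A_0)$. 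Your flagged regularity concern is legitimate but harmless here: once the weak identity forces $\Delta_{\Gamma_i^*}u_i+(\kappa_i^*)^2u_i$ to be constant, elliptic (ODE) regularity immediately bootstraps $u_i$ to smoothness, so $u$ genuinely lies in $D(A_0)$.
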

\begin{proof}
We first assume $u \in  \mathcal F(\Gamma^*)$ such that $I(u,u) = 0$.  Then by   Lemma 3.8 in \cite{HutchingsMorganRitorAntonio} and the fact that $u$ satisfies the linearized angle condition, we conclude that $u \in N(A_0)$.
The converse statement is  Corollary  \ref{Cor: null bilinear}.
\end{proof}
Note that  we have already shown in  Section \ref{Sec: gem interp} that 
$$
N(A_0) \cap \mathcal F(\Gamma^*)= \mathcal J (\Gamma^*)= \mathrm{span} \{  v^{(1)}, v^{(2)}, v^{(3)} \}.
$$
On the other hand  we  obtain: 
\begin{lem} \label{Lem: bilinear negative}
For the bilinear form $I$ it holds
\begin{equation*}
\begin{aligned}
 I ( u, u ) < 0   & &\text{ for } u = v^{(4)}, v^{(5)}\qquad (0< \gamma^* < \tfrac{2 \pi }{3})\,. 
\end{aligned} 
\end{equation*} 
\end{lem}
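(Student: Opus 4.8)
The plan is to reduce everything to the identity of Lemma~\ref{Lem: null bilinear}. Since $v^{(4)},v^{(5)}\in T_0\mathcal E = N(A_0)$ by Corollary~\ref{cor: null and manifold}, that lemma applies and gives $I(u,u)=-\sum_{i=1}^3 c_i\int_{\Gamma_i^*}u_i$ for $u=v^{(4)},v^{(5)}$, where $(c_1,c_2,c_3)$ is the constant vector from \eqref{Eq: linear ci} and $\sum_i c_i=0$. The first step is to use this last relation to rewrite the right-hand side in terms of the area derivatives that we already control. Writing $P=\int_{\Gamma_1^*}u_1$, $Q=\int_{\Gamma_2^*}u_2$, $R=\int_{\Gamma_3^*}u_3$ and eliminating $c_2=-c_1-c_3$ yields the purely algebraic identity
\[
\sum_{i=1}^3 c_i\int_{\Gamma_i^*}u_i = c_1(P-Q)-c_3(Q-R),
\]
whose two factors $P-Q$ and $Q-R$ are exactly the integrals appearing in \eqref{Eq: derivative area DB} and \eqref{Eq: derivative area DB gamma}.

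Second, I would compute the vectors $c=c(v^{(4)})$ and $c=c(v^{(5)})$. The quickest route is geometric: along the equilibrium manifold each configuration is again a standard double bubble with constant curvatures, so $c_i=\Delta_{\Gamma_i^*}u_i+(\kappa_i^*)^2u_i$ is the derivative of the (constant) curvature $\kappa_i$ in the parameter direction parametrized by the corresponding $v^{(j)}$; this can equally well be checked by substituting the explicit basis functions into \eqref{Eq: linear ci}. For $v^{(4)}$ (uniform scaling, so $\kappa_i\propto 1/r$) this gives $c_i=\kappa_1^*\kappa_i^*$, and one verifies $\sum_i c_i=\kappa_1^*(\kappa_1^*+\kappa_2^*+\kappa_3^*)=0$. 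For $v^{(5)}$ (variation of $\gamma$ at fixed $r$, so that $\kappa_1^*$ is held fixed) one finds $c_1=0$ and, using the law of sines \eqref{Eq: D useful one}, $c_2=-c_3=\partial_\gamma\kappa_2=\kappa_1^*\,\sin(\tfrac{2\pi}{3})/\sin^2(\gamma^*+\tfrac{\pi}{3})$, which is strictly negative because $\kappa_1^*<0$ and $\sin^2(\gamma^*+\tfrac{\pi}{3})>0$. The degenerate case $\kappa_2^*=0$ is handled identically with the corresponding explicit basis, where $c_2=\kappa_1^*/\sin(\tfrac{\pi}{3})<0$.

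Finally, I would assemble the signs. For $v^{(4)}$ the identity above becomes
\[
I(v^{(4)},v^{(4)}) = -(\kappa_1^*)^2\,\partial_r A_1 + \kappa_1^*\kappa_3^*\,\partial_r A_2,
\]
and both terms are negative: $(\kappa_1^*)^2>0$ together with $\partial_r A_1>0$, while $\kappa_1^*\kappa_3^*<0$ (as $\kappa_1^*<0<\kappa_3^*$) together with $\partial_r A_2>0$, the area signs coming from \eqref{Eq: derivative area DB}. For $v^{(5)}$ the $c_1$-term drops out and one is left with $I(v^{(5)},v^{(5)})=-c_2\,\partial_\gamma A_2$, which is negative because $c_2<0$ and $\partial_\gamma A_2<0$ by \eqref{Eq: derivative area DB gamma}. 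I expect the only delicate point to be the bookkeeping of trigonometric signs over the full range $0<\gamma^*<\tfrac{2\pi}{3}$ when identifying the $c_i$ — in particular pinning down the sign of $c_2$ for $v^{(5)}$ — since curvatures and areas change behaviour across $\gamma^*=\tfrac{\pi}{3}$; once Lemma~\ref{Lem: null bilinear} and the area derivatives are in hand, the rest is linear algebra.
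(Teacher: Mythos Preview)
Your argument is correct, and it begins exactly as the paper does: both invoke Lemma~\ref{Lem: null bilinear} to reduce the claim to showing
\[
\sum_{i=1}^3 c_i(u)\int_{\Gamma_i^*}u_i>0\qquad(u=v^{(4)},v^{(5)}).
\]
From here, however, the routes diverge. The paper simply cites assertion~(iii) of Lemmas~\ref{Lem: v4 negative} and~\ref{Lem: v5 negative}, which establish this inequality by computing all three integrals $\int_{\Gamma_i^*}u_i$ and all three constants $c_i$ explicitly via trigonometric estimates and then combining them term by term. Your approach is more structural: you eliminate $c_2$ using $\sum c_i=0$ to rewrite the sum as $c_1(P-Q)-c_3(Q-R)$, identify $P-Q$ and $Q-R$ with the area derivatives $\partial_\iota A_1,\partial_\iota A_2$ already controlled in \eqref{Eq: derivative area DB}--\eqref{Eq: derivative area DB gamma}, and read off the $c_i$ geometrically as the derivatives $\partial_\iota\kappa_i$ of the equilibrium curvatures. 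This last identification is justified because $\Delta_{\Gamma_i^*}u_i+(\kappa_i^*)^2u_i$ is the linearized curvature, as can be seen from the linearized boundary condition $\eqref{Eq: D eigenvalue linear BCs 1}_3$.

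What each buys: the paper's approach is self-contained and elementary but computation-heavy (and indeed part~(iii) of Lemmas~\ref{Lem: v4 negative}--\ref{Lem: v5 negative} occupies a fair amount of the appendix). Your approach is shorter and more conceptual; it still relies on part~(ii) of those same lemmas (through \eqref{Eq: derivative area DB}--\eqref{Eq: derivative area DB gamma}), but bypasses part~(iii) entirely. The ``delicate point'' you anticipate---pinning down the sign of $c_2(v^{(5)})$ uniformly in $\gamma^*$---is handled cleanly by your formula $c_2=\kappa_1^*\sin(\tfrac{2\pi}{3})/\sin^2(\gamma^*+\tfrac{\pi}{3})$, which is manifestly negative on the whole range; no case distinction at $\gamma^*=\tfrac{\pi}{3}$ is actually needed.
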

\begin{proof}
According to Lemma \ref{Lem: null bilinear},
$$
I(u,u) =-\sum_{i=1}^3  c_i(u) \int_{\Gamma_i^*} u_{i} \qquad \text{ for } v^{(4)}, v^{(5)}\big(\in N(A_0)\big) \,.
$$ 
Now assertion (iii) in Lemma \ref{Lem: v4 negative} and Lemma \ref{Lem: v5 negative}   proves the lemma.  
\end{proof}
\section{The proof of Lemma \ref{Rep: SPDB}} \label{App. Rep: SPDB}
 Consider the standard planar double bubble  $\Gamma = DB_{r, \gamma, 0}(0, 0) $. That is  the left circular arc $\Gamma_1$ has radius $r_1 = r$  centered at $O_1= (0,0)$ and all the other centers also lie  on the $x$-axis, for some  $ r > 0 $, $0 < \gamma < \tfrac{2 \pi}{3}$, see Figure \ref{Fig: Morgan}.

It follows directly from the law of sines that in case $\gamma \neq \frac{ \pi}{3}$ 
\begin{align*}
O_{2} &= \Big(  \frac{\sin (\tfrac{2 \pi}{3}) }{\sin (\gamma - \tfrac{ \pi}{3})}  \, r\, ,\,0 \Big) ,   && r_2 = \Big|\frac{\sin (\tfrac{2 \pi}{3} - \gamma) }{\sin (\gamma - \tfrac{ \pi}{3})}\Big| \,r\,,  \\
 O_{3} &= \Big( \frac{\sin (\tfrac{ \pi}{3})}{\sin (\gamma)}\, r \, , \, 0 \Big) \,, && r_3= \frac{\sin (\tfrac{2 \pi}{3} - \gamma) }{\sin (\gamma )}r\,.
\end{align*}

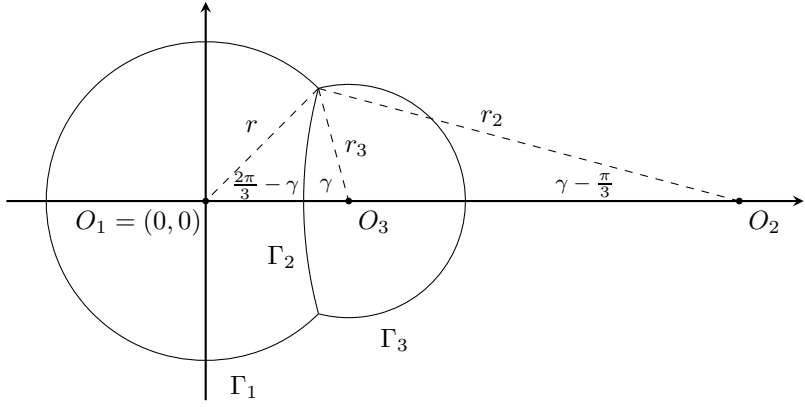
\begin{figure}[htbp]
           \centering
        \begin{tikzpicture}[scale=0.53,>=stealth]
                \draw (2.828,2.828) arc (45:315:4) node[above=-0.97cm,right=-1.3cm]{$ \Gamma_1$};
                \draw (2.828,2.828) arc (105:-105:2.928)node[above=-0.35cm,right=0.7cm]{$ \Gamma_3$};
                \draw (2.828,2.828) arc (165:195:10.92) node[above=0.75cm,right=-0.8cm]{$ \Gamma_2$};
                \filldraw [black] (0,0) circle (2pt) node[below= 8 , left= -2] {$O_1=(0,0)$}node[below=-6 , right= 6 ] {\footnotesize{$\tfrac{2 \pi}{3} - \gamma $}} node[above= 1cm , right = 11.5] {$r$} ;
               \filldraw [black] (3.5863,0) circle (2pt) node[below=8 , right= 0 ] {$O_3$}node[below=-6 , right=-14.5 ] {\footnotesize{$\gamma$}};
               \filldraw [black] (13.384,0) circle (2pt) node[below=8, right= 0 ] {$O_2$}node[below=-6 , right= -73 ] {\footnotesize{$\gamma - \tfrac{\pi}{3}$}}  node[above=1.1cm , left = 3cm ] {$r_2$};
             
                 \draw [dashed] (2.828,2.828) --  (3.5863,0)   node[above = 20 , right = -5]{$r_3$};
                 \draw [dashed] (0,0) --  (13.384,0)  node[above]{$$};
                 \draw [dashed] (2.828,2.828) --(0,0) node[right]{$$};
          
                    \draw[dashed] (2.828,2.828) --  (13.384,0)  node[left]{};
                     \draw [->,thick](0,-5)--(0,5);
                      \draw [->,thick](-5,0)--(15,0);
                        \end{tikzpicture}
                        \caption{Standard planar double bubble $\Gamma = DB_{r, \gamma, 0}(0, 0)$}
                        \label{Fig: Morgan}
\end{figure}
Therefore, for $\sigma = (\sigma_1, \sigma_2) \in \Gamma_2$, $\gamma \neq \frac{\pi}{3}$,
 we have
\begin{align*}
0 &= \Big |\sigma\ -\Big( \frac{\sin (\tfrac{2\pi}{3})}{\sin (\gamma - \tfrac{\pi}{3})} r, 0 \Big)  \Big|^2 -\Big( \frac{\sin ( \tfrac{2 \pi}{3} - \gamma) }{\sin (\gamma - \tfrac{ \pi}{3})}r \Big)^2 \\[5pt]
& = |\sigma|^2 -2 \sigma \cdot \Big(\frac{\sin (\tfrac{2\pi}{3})}{\sin (\gamma - \tfrac{\pi}{3})} r, 0 \Big) + \frac{\sin^2 (\tfrac{2\pi}{3}) - \sin^2 ( \tfrac{2 \pi}{3} - \gamma)}{\sin^2 (\gamma - \tfrac{\pi}{3})} r^2 \\
&=  \frac{2}{\sin( \gamma - \tfrac{\pi}{3})}  r\sin ( \gamma + \tfrac{ \pi}{3})G_2(\sigma ,r,\gamma)\,,
\end{align*}
where we  applied the formula $\sin^2 x - \sin^2 y = \sin(x+y)\sin(x-y)$.

Similarly,  for  $\sigma \in \Gamma_3$ we obtain
\begin{equation*}
0 = \Big |\sigma -\Big( \frac{\sin (\tfrac{\pi}{3})}{\sin (\gamma )} r, 0 \Big)  \Big|^2 -\Big( \frac{\sin ( \tfrac{2 \pi}{3} - \gamma) }{\sin (\gamma )}r \Big)^2 = \frac{2}{\sin( \gamma -  \pi)}  r\sin ( \gamma + \tfrac{ \pi}{3})G_3( \sigma ,r,\gamma)\,
\end{equation*}
and  obviously for $\sigma \in \Gamma_1$ we have 
$$
0 = |\sigma|^2 -r^{2} =\frac{2}{  \sin(\gamma + \tfrac{\pi}{3})}  r\sin ( \gamma + \tfrac{ \pi}{3})G_1 (\sigma, r,\gamma)\,.
$$
Furthermore, we see   for $\sigma \in \Gamma_2$, $\gamma = \frac{\pi}{3}$ that $ 0 = \frac{r}{2} - \sigma_1 =G_2(\sigma,r, \tfrac{\pi}{3} )$.

Finally, the identity \eqref{cos and sin} easily verifies   \eqref{Id: sum to zero}. This finishes the proof as the coefficients appearing above are all nonzero and well-defined.
\section{Arc-length parameterization of $\Gamma^*$}
\begin{pr}\label{Pro: DB arc para}
 An arc-length parameterization of $\Gamma_i^*$, $i = 1,2,3$ is given as follows: For $\gamma^* \neq \frac{\pi}{3}$, 
\begin{equation*}
(\sigma_1, \sigma_2)= \sigma =
\left \{ 
\begin{aligned}
&\frac{1}{\kappa_1^*} \big(\cos (\kappa_1^* x), \sin( \kappa_1^* x )\big) & & \text{for } \sigma \in \Gamma^*_1 \,,  \\[5pt]
& 
\Big(\frac{\sin (\tfrac{\pi}{3}) }{\sin(\gamma^* - \frac{\pi}{3})}  r^* + \frac{1}{\kappa_2^*} \cos (\kappa_2^* x) \,\,, \,\,  \frac{1}{\kappa_2^*}\sin(\kappa_2^* x) \Big) & & \text{for } \sigma \in \Gamma^*_2 \,,
\\[5pt]
&\Big (\frac{-\sin (\tfrac{\pi}{3}) }{\sin(\gamma^* -\pi)}  r^* + \frac{1}{\kappa_3^*} \cos (\kappa_3^* x) \,\,, \,\,  \frac{1}{\kappa_3^*}\sin(\kappa_3^* x) \Big)
& & \text{for } \sigma \in \Gamma^*_3 \,,
\end{aligned}
\right.
\end{equation*}
Moreover this arc-length parameterization is continuous  at $\gamma^* = \frac{\pi}{3}$ and in particular $\sigma = (\frac{r^*}{2} \,,  \,x)$ for $\sigma \in \Gamma^*_2$, $\gamma^* = \frac{\pi}{3}$.
\end{pr}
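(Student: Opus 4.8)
The plan is to reduce the statement to the elementary fact that a circle of radius $R$ centred at a point $O$ admits the unit-speed parameterization $x \mapsto O + R\,(\cos(x/R),\sin(x/R))$, and then to specialize this to the three arcs using the centres and radii already extracted in the proof of Lemma \ref{Rep: SPDB}. Concretely, I would first recall from Appendix \ref{App. Rep: SPDB} that for $\gamma^* \neq \tfrac{\pi}{3}$ the arcs $\Gamma_i^*$ lie on circles $S_{r_i}(O_i)$ with
$$
O_1 = (0,0), \quad O_2 = \Big(\tfrac{\sin(\frac{2\pi}{3})}{\sin(\gamma^*-\frac{\pi}{3})}\,r^*,\,0\Big), \quad O_3 = \Big(\tfrac{\sin(\frac{\pi}{3})}{\sin\gamma^*}\,r^*,\,0\Big),
$$
and $r_i = \lvert 1/\kappa_i^* \rvert$. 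Writing each radius as $1/\kappa_i^*$ absorbs the sign, so I would propose
$$
\sigma = O_i + \tfrac{1}{\kappa_i^*}\big(\cos(\kappa_i^* x),\, \sin(\kappa_i^* x)\big) \qquad (i=1,2,3)
$$
as the candidate parameterization.

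Two short verifications then establish the claim for $\gamma^* \neq \tfrac{\pi}{3}$. First, $\lvert \sigma - O_i\rvert^2 = 1/(\kappa_i^*)^2 = r_i^2$, so the curve lies on the correct circle; second, $\partial_x \sigma = (-\sin(\kappa_i^* x), \cos(\kappa_i^* x))$ has unit length, so the parameter $x$ is indeed arc length. To pin down the orientation and base point I would invoke the relation $\sigma - O_i = -\tfrac{1}{\kappa_i^*} n_i^*(\sigma)$ recorded in the proof of Lemma \ref{Cor: gradient G}, which fixes the direction of the outward normal and hence selects exactly the formula above rather than a reflected or time-reversed variant. Substituting the explicit $O_i$ and simplifying with $\sin(\tfrac{2\pi}{3}) = \sin(\tfrac{\pi}{3})$ and $\sin(\gamma^* - \pi) = -\sin\gamma^*$ reproduces the stated expressions verbatim.

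The only genuinely delicate point, and the one I expect to be the main obstacle, is the behaviour at $\gamma^* = \tfrac{\pi}{3}$, where $\kappa_2^* = 0$: there the centre $O_2$ recedes to infinity and the radius $1/\kappa_2^*$ blows up, while the arc itself straightens into a vertical segment, so the formula for $\Gamma_2^*$ presents a $0\cdot\infty$ indeterminacy in its first coordinate. To resolve it I would set $\epsilon = \gamma^* - \tfrac{\pi}{3}$, use the law-of-sines relation \eqref{Eq: D useful one} in the form $\tfrac{1}{\kappa_2^*} = -r^*\,\tfrac{\sin(\gamma^*+\frac{\pi}{3})}{\sin(\gamma^*-\frac{\pi}{3})}$, combine the centre and oscillatory terms over the common denominator $\sin\epsilon$, and Taylor expand. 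Since
$$
\sin(\tfrac{\pi}{3}) - \sin(\tfrac{2\pi}{3}+\epsilon)\cos(\kappa_2^* x) = -\cos(\tfrac{2\pi}{3})\,\epsilon + O(\epsilon^2) = \tfrac{1}{2}\epsilon + O(\epsilon^2),
$$
the first coordinate tends to $r^*/2$, while $\tfrac{1}{\kappa_2^*}\sin(\kappa_2^* x) \to x$ gives the second. This yields $\sigma = (\tfrac{r^*}{2}, x)$ on $\Gamma_2^*$ in the limit, establishing continuity at $\gamma^* = \tfrac{\pi}{3}$ and completing the proof.
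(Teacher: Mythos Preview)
Your proposal is correct and follows essentially the same approach as the paper: both use the relation $\sigma = O_i - \tfrac{1}{\kappa_i^*}\,n_i^*(\sigma)$ together with $n_i^* = -(\cos(\kappa_i^* x),\sin(\kappa_i^* x))$ and the explicit centres from Appendix~\ref{App. Rep: SPDB}. The paper is terser, merely invoking identity~\eqref{Eq: D useful one} and L'H\^opital's rule for the continuity at $\gamma^* = \tfrac{\pi}{3}$, whereas you carry out the equivalent Taylor expansion explicitly.
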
 

\begin{proof}
We give the proof for $\Gamma^*_2$. We observe
\begin{align*}
(\sigma_1, \sigma_2)= \sigma &=  O^*_2 - \frac{1}{\kappa_2^*} n_2^* = O^*_2 + \frac{1}{\kappa_2^*} \big(\cos (\kappa_2^* x), \sin( \kappa_2^* x )\big) \quad  \\
&=
\bigg(\frac{\sin (\tfrac{\pi}{3}) }{\sin(\gamma^* - \frac{\pi}{3})}  r^* + \frac{1}{\kappa_2^*} \cos (\kappa_2^* x) \,\,, \,\,  \frac{1}{\kappa_2^*}\sin(\kappa_2^* x) \bigg) \qquad \text{for } \sigma \in \Gamma^*_2 \,.
\end{align*}
The proof of the continuity can be done using the  identity \eqref{Eq: D useful one}  and the  L'Hôpital's rule.
\end{proof}
\section{The signs of the integrals }
\begin{lem} \label{Lem: v4 negative}
Let $ 0< \gamma^* < \frac{2 \pi}{3}$.  Then 
\begin{align*}
\mathrm{(i)} & \; \int_{\Gamma_1^*} {v^{(4)}}_1 > 0 \,, \quad \int_{\Gamma_2^*} {v^{(4)}}_2 < 0 \,, \quad \int_{\Gamma_3^*} {v^{(4)}}_3 < 0 \,, 
\\[2pt]
\mathrm{(ii)}& \; \int_{\Gamma_1^*} {v^{(4)}}_1 - \int_{\Gamma_2^*} {v^{(4)}}_2 > 0 \,, \quad \int_{\Gamma_2^*} {v^{(4)}}_2 - \int_{\Gamma_3^*} {v^{(4)}}_3 > 0 \,, \\
\mathrm{(iii)}& \; \sum_{i=1}^3  c_i(v^4) \int_{\Gamma_i^*} {v^{(4)}}_{i} > 0 \,.  
\end{align*}
\end{lem}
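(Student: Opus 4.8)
The plan is to reduce everything to three ingredients: the explicit integrals $J_i := \int_{\Gamma_i^*} v^{(4)}_i$, the fact that $v^{(4)}$ is the normal velocity of a scaling, and the constants $c_i(v^{(4)})$. First I would record that $v^{(4)}_1 \equiv 1$, so that $J_1$ is simply the length of $\Gamma_1^*$, namely $2l_1^* = 2r^*(\gamma^* + \tfrac{\pi}{3}) > 0$, which settles the first sign in (i). For $J_2$ and $J_3$ I would insert the arc-length parameterization of Proposition~\ref{Pro: DB arc para} and compute the elementary integrals $\int_{-l_i^*}^{l_i^*}\cos(\kappa_i^* x)\,dx = 2\sin(\kappa_i^* l_i^*)/\kappa_i^*$ together with $\int_{-l_i^*}^{l_i^*}1\,dx = 2l_i^*$, using $\kappa_1^* l_1^* = -(\gamma^*+\tfrac{\pi}{3})$, $\kappa_2^* l_2^* = -(\gamma^*-\tfrac{\pi}{3})$, $\kappa_3^* l_3^* = -(\gamma^*-\pi)$ and the ratio identity \eqref{Eq: D useful one}.

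For part (ii) I would avoid computation entirely and argue geometrically: $v^{(4)}$ is the normal velocity field of the uniform scaling $r\mapsto DB_{r,\gamma^*,\theta^*}(a_1^*,a_2^*)$ about the centre of $\Gamma_1^*$, so each enclosed area obeys $A_i(r) = (r/r^*)^2 A_i$ and hence $\partial_r A_i\big|_{r=r^*} = 2A_i/r^* > 0$. Combined with the standard first-variation-of-area identity $\partial_r A_1 = J_1 - J_2$ and $\partial_r A_2 = J_2 - J_3$ (cf.\ \eqref{Eq: derivative area DB}), this is precisely (ii). In passing this records the two identities $J_2 = J_1 - 2A_1/r^*$ and $J_3 = J_2 - 2A_2/r^*$, which I will reuse.

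For the two remaining signs in (i): the explicit value of $J_3$ has the form $\tfrac{2}{\kappa_3^*}\big[-\sin\tfrac{\pi}{3} + (\gamma^*-\pi)\sin(\gamma^*+\tfrac{\pi}{3})/\sin\gamma^*\big]$, where both bracket terms are negative (recall $\gamma^* - \pi < 0$, $\sin(\gamma^*+\tfrac{\pi}{3})>0$, $\sin\gamma^*>0$) while $\kappa_3^* > 0$, so $J_3 < 0$ follows at once. The genuinely delicate sign is $J_2 < 0$: writing $\phi = \gamma^* - \tfrac{\pi}{3}\in(-\tfrac{\pi}{3},\tfrac{\pi}{3})$, simplification reduces the claim to the scalar inequality $g(\phi) := 2\phi\sin(\phi+\tfrac{2\pi}{3}) - \sqrt{3}\sin\phi < 0$ for $\phi\neq 0$ (the value $\phi=0$, i.e.\ $\gamma^*=\tfrac{\pi}{3}$, being covered by continuity, exactly as in the treatment of the $q_i$). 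I expect this scalar inequality to be the main obstacle, since the sign of the bracket flips across $\gamma^* = \tfrac{\pi}{3}$ and must match the sign of $\kappa_2^*$. I would dispatch it by writing $g(\phi) = -\phi\sin\phi + \sqrt{3}\,h(\phi)$ with $h(\phi) = \phi\cos\phi - \sin\phi$, noting $h(0)=0$ and $h'(\phi) = -\phi\sin\phi$ so that $\operatorname{sign} h(\phi) = -\operatorname{sign}\phi$; for $\phi>0$ both summands are negative, while for $\phi<0$ a short monotonicity estimate shows the $-\phi\sin\phi$ term dominates.

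Finally, for (iii) no new sign analysis is needed. Using the two identities from the scaling step together with $c_1 + c_2 + c_3 = 0$, the sum telescopes to $\sum_{i} c_i J_i = \tfrac{2}{r^*}\big(A_1 c_1 - A_2 c_3\big)$. The constants follow from $\partial_x^2 v^{(4)}_i + (\kappa_i^*)^2 v^{(4)}_i = c_i$, giving $c_1 = (\kappa_1^*)^2 > 0$ and $c_3 = (\kappa_3^*)^2\sin(\gamma^*+\tfrac{\pi}{3})/\sin(\gamma^*-\pi) < 0$ because $\sin(\gamma^*-\pi) = -\sin\gamma^* < 0$; one checks $\sum_i c_i = 0$ via \eqref{cos and sin}. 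Since $A_1, A_2 > 0$, we obtain $A_1 c_1 - A_2 c_3 = A_1 c_1 + A_2|c_3| > 0$, hence (iii). By Lemma~\ref{Lem: null bilinear} this equals $-I(v^{(4)},v^{(4)})$, so the sign is consistent with variational instability of the scaling direction, but computing $\sum_i c_i J_i$ directly keeps the argument self-contained.
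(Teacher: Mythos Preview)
Your geometric arguments for (ii) and (iii) are correct and genuinely different from the paper's. For (ii) the paper writes $J_i := \int_{\Gamma_i^*} v^{(4)}_i$ as $J_2 = 2l_2^* f(\gamma^*)$, $J_3 = 2l_3^* g(\gamma^*)$ and then establishes $J_2 - J_3 > 0$ through two separate auxiliary estimates, $f-g>0$ and $l_3^*/l_2^*>1$; you bypass all of this by observing $\partial_r A_i|_{r=r^*} = 2A_i/r^*>0$ directly from the dilation law $A_i(r) = (r/r^*)^2 A_i(r^*)$. Your telescoping for (iii), giving $\sum_i c_i J_i = \tfrac{2}{r^*}(A_1 c_1 - A_2 c_3)$ with $c_1>0$, $c_3<0$, is likewise cleaner than the paper's route, which bounds the sum from below by $4\kappa_1^* f(\gamma^*)(\tfrac{2\pi}{3}-\gamma^*)$ using the explicit values $c_i = \kappa_1^*\kappa_i^*$ and the inequality $g>f$. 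What your approach buys is that (ii) and (iii) become essentially computation-free; what the paper's approach buys is that the same functions $f,g$ also settle (i).

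The one genuine gap is the case $\phi<0$ in your proof of $J_2<0$. Your claim that ``a short monotonicity estimate shows the $-\phi\sin\phi$ term dominates'' does not go through as stated: writing $g(\phi) = -\phi\sin\phi + \sqrt{3}(\phi\cos\phi - \sin\phi)$ one has $g'(0)=0$ but $g'(-\tfrac{\pi}{3}) = \tfrac{\sqrt 3}{2} - \tfrac{\pi}{3} < 0$, so $g$ is \emph{not} monotone on $(-\tfrac{\pi}{3},0)$ and the two summands cannot simply be compared pointwise. The paper's argument here is both simpler and uniform in $\gamma^*$: with $f(\gamma^*) = -\tfrac{\sin(\pi/3)}{\gamma^*-\pi/3} + \tfrac{\sin(\gamma^*+\pi/3)}{\sin(\gamma^*-\pi/3)}$ one computes
\[
f'(\gamma^*) \;=\; \sin\!\Big(\tfrac{\pi}{3}\Big)\left[\frac{1}{(\gamma^*-\tfrac{\pi}{3})^2}-\frac{1}{\sin^2(\gamma^*-\tfrac{\pi}{3})}\right] \;<\; 0
\]
by the elementary inequality $|\sin x|<|x|$, and then $f(0) = \tfrac{3\sqrt 3}{2\pi}-1<0$ finishes the job without any case split at $\gamma^*=\tfrac{\pi}{3}$. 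I would swap this in for the $J_2<0$ step; the remainder of your proof can stand unchanged.
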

\begin{proof}

In order to easily see the general strategy of the proof, let us first verify the assertions for   $\gamma^*= \frac{\pi}{3}$:   
\begin{align*}
\int_{-l^*_1}^{l^*_1} {v^{(4)}}_1 &= \int_{-l^*_1}^{l^*_1} 1 = 2 l^*_1 >0 \,, & &c_1(v^{(4)}) = (\kappa_1^*)^2 > 0 \,,\\
\int_{-l^*_2}^{l^*_2} {v^{(4)}}_2 &=- \int_{-l^*_2}^{l^*_2} \frac{1}{2} = -l^*_2 < 0 \,, & & c_2(v^{(4)}) = 0 \,, \\
\int_{-l^*_3}^{l^*_3} {v^{(4)}}_3 &=- \int_{- l^*_3}^{l^*_3} 1 +\cos(\kappa_1^*x)
<0 \,, & & c_3(v^{(4)}) = -(\kappa_3^*)^2 <0 \,.
\end{align*}
Therefore, 
$$
\sum_{i=1}^3  c_i(v^{(4)}) \int_{\Gamma_i^*} \!{v^{(4)}}_i > 0 \qquad (\gamma^* = \tfrac{\pi}{3})\,.
$$ 
Next we calculate  
\begin{align*}
\int_{\Gamma_2^*} {v^{(4)}}_2 - \int_{\Gamma_3^*} {v^{(4)}}_3 &= -l_2^* + 2 l_1^* + \int_{-l_1^*}^{l_1^*} \cos(\kappa_1^* x)= -l_2^* + 2 l_1^*  + 2 \frac{\sin(\kappa_1^* l^*_1)}{\kappa_1^*}  \\
&=\frac{\sin(\frac{ \pi}{3})}{\kappa_1^*} + 2 l_1^* -2 \frac{\sin(\frac{ \pi}{3})}{\kappa_1^*} = 2l_{1}^* +r^{*} \sin(\tfrac{ \pi}{3})  > 0 \,, 
\end{align*}
where we have used the facts that 
$$
l_1^* = l_3^* , \quad l_2^* = - \frac{\sin(\frac{ \pi}{3})}{\kappa_1^*}, \quad \kappa_1^* l_1^* = - \tfrac{2 \pi}{3} \qquad (\gamma^* = \tfrac{\pi}{3})\,.
$$
Obviously $\int_{\Gamma_1^*} {v^{(4)}}_1 - \int_{\Gamma_2^*} {v^{(4)}}_2 > 0$ which completes the proof of assertions  (i)-(iii)  in case $\gamma^* = \frac{\pi}{3}$.

Assume now $\gamma^* \neq \tfrac{\pi}{3}$.  Then we calculate  
$
\int_{\Gamma_1^*} {v^{(4)}}_1 =\int _{\Gamma_1^*} 1=  2 l^*_1 > 0 $ and
\begin{align*}
\frac{1}{2}\int_{-l^*_2}^{l^*_2} {v^{(4)}}_2 &= \frac{1}{2} \int_{-l^*_2}^{l^*_2}   \Big(\frac{  -\sin (\frac{\pi}{3})}{ \sin ( \gamma^* - \frac{\pi}{3})}\cos(\kappa_2^* x) + \frac{\sin ( \gamma^{*} + \frac{\pi}{3})}{\sin(\gamma^* - \frac{\pi}{3})} \Big) \\
&=  \frac{  -\sin (\frac{\pi}{3})}{\sin ( \gamma^* - \frac{\pi}{3})} \frac{\sin(\kappa_2^* l^*_2)}{\kappa_2^*} +  \frac{ \sin ( \gamma^{*} + \frac{\pi}{3}) }{\sin ( \gamma^* - \frac{\pi}{3})}l^*_{2  } =  \frac{\sin (\frac{\pi}{3})}{ \kappa_2^*}+ \frac{ \sin ( \gamma^{*} + \frac{\pi}{3}) }{\sin ( \gamma^* - \frac{\pi}{3})} l^*_2\\
&=  l^*_2 \Big(  \frac{-\sin (\frac{\pi}{3})}{  (\gamma^* - \frac{\pi}{3})}+ \frac{ \sin ( \gamma^{*} + \frac{\pi}{3}) }{\sin ( \gamma^* - \frac{\pi}{3})} \Big) = : l^*_2 f(\gamma^*)\,, 
\end{align*}
where we  used the fact that $\kappa_2^* l^*_2 = - (\gamma^* - \frac{\pi}{3})$. Similarly 
$$
\int_{-l^*_3}^{l^*_3} {v^{(4)}}_3 =2 l^*_3 \Big(  \frac{\sin (\frac{\pi}{3})}{  (\gamma^* -\pi)}+ \frac{ \sin (\gamma^* + \frac{\pi}{3}) }{\sin (\gamma^*- \pi)} \Big)= :2l_3^*  \,g( \gamma^*)\,.
$$
 Obviously the function $g$ is  negative  on $(0, \frac{2 \pi}{3})$. Taking into account the fact that $\sin(x)< x$ for  $ 0<x<\pi$, it is easy  to check that  $f(0) < 0$, $f'<0$   and so   $f<0$   on $(0, \frac{2 \pi}{3})$ too. Thus    
$$
\int_{-l^*_2}^{l^*_2} {v^{(4)}}_2 = 2 l^*_2 f(\gamma^*)< 0 ,  \quad \int_{-l^*_3}^{l^*_3} {v^{(4)}}_3 = 2 l^*_3 g(\gamma^*)< 0 \,.
$$
Assertion (i) follows. 

Similar argument shows that $g'>0$  and so $f'-g'<0$ on $(0, \tfrac{2 \pi }{3})$. This together with  $ (f-g)(\frac{2 \pi}{3}) = 0 $ implies $f-g>0$.
Observe further that   
$$
\frac{ \sin(\gamma^* - \pi)}{(\gamma^* - \pi)} \leq \frac{\sin(- \frac{\pi}{3})}{( - \frac{\pi}{3})} = \frac{\sin( \frac{\pi}{3})}{( \frac{\pi}{3})} < \frac{\sin(\gamma^* - \frac{\pi}{3})}{(\gamma^* - \frac{\pi}{3})}  \qquad \text{on } (0, \tfrac{2 \pi }{3})  
$$  
as the function $\frac{\sin(x)}{x}$ is  strictly increasing and decreasing  on intervals $(-\pi, 0)$ and $(0,\pi)$ respectively.  Thus we conclude $$
\frac{l^*_3}{l^*_2} =\frac{(\gamma^* - \pi)}{(\gamma^* - \frac{\pi}{3})}\frac{\kappa_2^*}{\kappa_3^*} = \frac{(\gamma^* - \pi)}{\sin(\gamma^* - \pi)}\frac{\sin(\gamma^* - \frac{\pi}{3})}{(\gamma^* - \frac{\pi}{3})} > 1 \, \qquad (0< \gamma^* < \tfrac{2 \pi}{3})\,. 
$$We are now ready to  estimate 
\begin{align*}
\int_{-l^*_2}^{l^*_2} {v^{(4)}}_2-\int_{-l^*_3}^{l^*_3} {v^{(4)}}_3 &= 2 l^*_2 f(\gamma^*) -2 l^*_3 g( \gamma^*) \\
&>2l^{*}_2 \big( f(\gamma^*) \,-g(\gamma^*) \big) > 0 \,. 
\end{align*}
Moreover, 
  $\int_{\Gamma_1^*} {v^{(4)}}_1 - \int_{\Gamma_2^*} {v^{(4)}}_2 > 0$ by assertion (i).  This proves assertion (ii).  
 
To prove assertion (iii) we observe:
\begin{align*}
c_2(v^{(4)}) = (\kappa_2^*)^2  \frac{ \sin ( \gamma^{*} + \frac{\pi}{3}) }{\sin ( \gamma^* - \frac{\pi}{3})}  = \kappa^*_1 \kappa_2^* \,, \quad  
c_3(v^{(4)}) = (\kappa_3^*)^2 \frac{ \sin ( \gamma^{*} + \frac{\pi}{3}) }{\sin ( \gamma^* - \pi)} = \kappa^*_1 \kappa_3^* \,,
\end{align*}
 and $c_1(v^{(4)}) = (\kappa_1^*)^2$. Therefore, taking into account that $\kappa_1^* < 0$
\begin{align*}
 \sum_{i=1}^3  c_i(v^4) \int_{\Gamma_i^*} {v^{(4)}}_{i} &= 2 l_1^* (\kappa_1^*)^2 + 2 l^*_2 \kappa^*_1 \kappa_2^* f(\gamma^*) + 2 l^*_3 \kappa^*_1 \kappa_3^* g(\gamma^*)\\[-9pt]
&>  2 l^*_2 \kappa^*_1 \kappa_2^* f(\gamma^*) + 2 l^*_3 \kappa^*_1 \kappa_3^* f(\gamma^*)= 2 \kappa^*_1 f(\gamma^*) \big(  l^*_2 \kappa_2^*  +  l^*_3 \kappa_3^* \big) \\
&= 2 \kappa^*_1 f(\gamma^*)(-(\gamma^* - \tfrac{\pi}{3}) - (\gamma^* - \pi))=  4 \kappa^*_1 f(\gamma^*)(-\gamma^* + \tfrac{2\pi}{3}) \\ 
& > 0\,.   
\end{align*}
\end{proof}
\begin{lem}\label{Lem: v5 negative}
Let $ 0< \gamma^* < \frac{2 \pi}{3}$.  Then 
\begin{align*}
\mathrm{(i)} & \;\int_{\Gamma_1^*} {v^{(5)}}_1 = 0  \,, \quad \int_{\Gamma_2^*} {v^{(5)}}_2 < 0 \,, \quad \int_{\Gamma_3^*} {v^{(5)}}_3 > 0 \,,
\\[2pt]
\mathrm{(ii)}& \;\int_{\Gamma_1^*} {v^{(5)}}_1 - \int_{\Gamma_2^*} {v^{(5)}}_2 > 0 \,, \quad \int_{\Gamma_2^*} {v^{(5)}}_2 - \int_{\Gamma_3^*} {v^{(5)}}_3 < 0 \,,\\
\mathrm{(iii)}& \; \sum_{i=1}^3  c_i(v^5) \int_{\Gamma_i^*} {v^{(5)}}_{i} > 0 \,.   
\end{align*}
\end{lem}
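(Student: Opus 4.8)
The plan is to mirror the proof of Lemma~\ref{Lem: v4 negative}, splitting into the degenerate case $\gamma^* = \frac{\pi}{3}$ (where $\kappa_2^* = 0$) and the generic case $\gamma^* \neq \frac{\pi}{3}$, while systematically exploiting that $v^{(5)}_1 \equiv 0$, which trivializes several of the assertions. In particular, once (i) is established, the two inequalities in (ii) follow by pure arithmetic: $\int_{\Gamma_1^*} v^{(5)}_1 - \int_{\Gamma_2^*} v^{(5)}_2 = 0 - (\text{neg}) > 0$ and $\int_{\Gamma_2^*} v^{(5)}_2 - \int_{\Gamma_3^*} v^{(5)}_3 = (\text{neg}) - (\text{pos}) < 0$, so no separate work is needed for (ii).

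First I would dispose of $\gamma^* = \frac{\pi}{3}$ using the explicit formulas for $v^{(5)}$ recorded at the end of the proof of Theorem~\ref{Theo: DB equilibria 5}. The first integral in (i) vanishes since $v^{(5)}_1 \equiv 0$. The component $v^{(5)}_2 = \frac{\kappa_1^*}{\sin(\pi/3)}\big(\frac{1}{2}x^2 + \frac{3}{8}(\kappa_1^*)^{-2}\big)$ is a strictly positive quadratic times $\kappa_1^* < 0$, hence strictly negative pointwise, giving $\int_{\Gamma_2^*} v^{(5)}_2 < 0$; likewise $v^{(5)}_3 = \frac{-1}{\kappa_1^*\sin(\pi/3)}\big(\frac{1}{2}\cos(\kappa_1^* x)+1\big)$ has a strictly positive bracket and $-1/\kappa_1^* > 0$, so $\int_{\Gamma_3^*} v^{(5)}_3 > 0$. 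This settles (i) (and hence (ii)) in this case.

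For (i) in the generic case $\gamma^* \neq \frac{\pi}{3}$, I would write $v^{(5)}_i = a_i\cos(\kappa_i^* x) + b_i$ ($i = 2,3$) with the constants $a_i, b_i$ extracted from $-\partial_\gamma G_i(\cdot, r^*, \gamma^*)$ via Proposition~\ref{Prop: derivative gamma} and the arc-length parameterization of Proposition~\ref{Pro: DB arc para}. Integrating over $[-l_i^*, l_i^*]$ and substituting $\kappa_2^* l_2^* = -(\gamma^* - \frac{\pi}{3})$, $\kappa_3^* l_3^* = -(\gamma^* - \pi)$ reduces each integral to an explicit product of $l_i^*$ with a trigonometric function of $\gamma^*$, exactly as the functions $f, g$ arose for $v^{(4)}$. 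The main obstacle is then the sign analysis: I would establish the required strict signs on the whole interval $(0, \frac{2\pi}{3})$ by combining the boundary values at the endpoints, monotonicity of the relevant functions read off from their derivatives, and the elementary facts $\sin x < x$ and the monotonicity of $x \mapsto \sin(x)/x$ already used in the proof of Lemma~\ref{Lem: v4 negative}.

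Finally, for (iii) I would compute $c_i(v^{(5)})$ directly from $\partial_x^2 v^{(5)}_i + (\kappa_i^*)^2 v^{(5)}_i = c_i$: the cosine terms are annihilated, yielding $c_1 = 0$ and $c_i = (\kappa_i^*)^2 b_i$ for $i = 2,3$, with the consistency check $\sum_i c_i = 0$. Assertion (iii) then reduces to showing $c_2 \int_{\Gamma_2^*} v^{(5)}_2 > 0$ and $c_3 \int_{\Gamma_3^*} v^{(5)}_3 > 0$; since (i) already gives $\int_{\Gamma_2^*} v^{(5)}_2 < 0$ and $\int_{\Gamma_3^*} v^{(5)}_3 > 0$, it suffices to verify $c_2 < 0$ (i.e.\ $b_2 < 0$) and $c_3 > 0$ (i.e.\ $b_3 > 0$), which is another trigonometric sign check of the same flavor. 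I expect this bookkeeping for the signs of the $b_i$ and of the integral functions of $\gamma^*$ --- rather than any conceptual difficulty --- to be the hardest and most error-prone part, just as it was for $v^{(4)}$.
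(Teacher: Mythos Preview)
Your proposal is correct and follows essentially the same structure as the paper's proof: the split into $\gamma^*=\tfrac{\pi}{3}$ versus $\gamma^*\neq\tfrac{\pi}{3}$, the observation that (ii) is immediate from (i), and the reduction of (iii) to the sign checks $c_2(v^{(5)})<0$, $c_3(v^{(5)})>0$ all match. The one technical difference is in handling (i) for $\gamma^*\neq\tfrac{\pi}{3}$: you propose to integrate $a_i\cos(\kappa_i^*x)+b_i$ exactly and analyze the resulting function of $\gamma^*$ (mirroring the $v^{(4)}$ argument), whereas the paper instead writes $-\int_{\Gamma_2^*}v^{(5)}_2=\int_{\Gamma_2^*}\partial_\gamma G_2$, bounds $|\sigma|^2$ from below on $\Gamma_2^*$ by a perfect square, and reduces the sign question to a single auxiliary function $f(\gamma^*)$ shown to be positive via monotonicity and $f(\tfrac{2\pi}{3})=0$. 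Both routes work; the paper's estimate sidesteps some of the bookkeeping in the exact constants $a_i,b_i$ at the cost of a slightly less direct argument.
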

\begin{proof}
Let us first consider the case $\gamma^* = \frac{\pi}{3}$. Then  
\begin{align*}
\int_{-l^*_1}^{l^*_1} {v^{(5)}}_1 &=0, & &c_1(v^{(4)}) = 0\,,\\
\int_{-l^*_2}^{l^*_2} {v^{(5)}}_2 &= \tfrac{\kappa_1^*}{\sin ( \frac{\pi}{3})}  \int_{-l^*_2}^{l^*_2} \frac{1}{2}x^2 + \frac{3}{8}\frac{1}{(\kappa_1^*)^2}< 0 \,, & & c_2(v^{(5)}) = \tfrac{\kappa_1^*}{\sin ( \frac{\pi}{3})} < 0, \\
\int_{-l^*_3}^{l^*_3} {v^{(5)}}_3 &= \frac{-1}{ \kappa_1^*\sin ( \frac{\pi}{3})} \int_{- l^*_3}^{l^*_3}  \frac{1}{2} \cos(\kappa_1^* x) + 1>0 \,, & & c_3(v^{(5)}) =\frac{1}{2}  \frac{-(\kappa_1^*)^2}{ \kappa_1^*\sin ( \frac{\pi}{3})}>0 \,.
\end{align*}
Therefore, 
$$
\sum_{i=1}^3  c_i(v^{(5)}) \int_{\Gamma_i^*} \!{v^{(5)}}_i > 0 \qquad (\gamma^* = \tfrac{\pi}{3})\,.
$$
Assertion (ii) is an immediate consequence of assertion (i). This proves (i)-(iii) in case $\gamma^* = \frac{\pi}{3}$.

Now assume $\gamma^* \neq \frac{\pi}{3}$. Clearly  $\int_{\Gamma_1^*} {v^{(5)}}_1 = 0$ as ${v^{(5)}}_1 = 0$. Next we compute   
\begin{align*}
| \sigma|^2 &= \frac{1}{(\kappa_2^*)^2} +  \frac{\sin^2( \frac{\pi}{3})}{ \sin^2( \gamma^* - \frac{\pi}{3})}(r^*)^2 + 2r^* \frac{\sin( \frac{\pi}{3})}{ \sin( \gamma^* - \frac{\pi}{3})
}\cos(\kappa_2^* x)\frac{1}{\kappa_2^*} \\
&=\frac{\sin^2( \gamma^* + \frac{\pi}{3})}{\sin^2( \gamma^* - \frac{\pi}{3})}  (r^*)^2+  \frac{\sin^2( \frac{\pi}{3})}{ \sin^2( \gamma^* - \frac{\pi}{3})}(r^*)^2 - 2(r^*)^2 \frac{\sin( \frac{\pi}{3})\sin( \gamma^* + \frac{\pi}{3})}{ \sin^{2}( \gamma^* - \frac{\pi}{3})
}\cos(\kappa_2^* x) \\
& \geq (r^*)^2 \big(  \frac{\sin( \gamma^* + \tfrac{\pi}{3}) - \sin( \tfrac{\pi}{3})}{\sin( \gamma^* - \frac{\pi}{3})} \big)^2 \qquad \text{ for } \sigma \in \Gamma^*_2 \,.
\end{align*}
Therefore,
\begin{align*}
-\int_{\Gamma_2^*}\! {v^{(5)}}_2 &=  \int_{\Gamma_2^*}\!\partial_\gamma G_2( \sigma, r^*,\gamma^*) \\
&=\tfrac{1}{r^{*}\sin ( \gamma^* + \frac{\pi}{3})}\int_{\Gamma_2^*}\! \cos( \gamma^* - \tfrac{\pi}{3})|\sigma|^2 -(r^*)^2\cos (\gamma^* - \pi) \\ & \geq \tfrac{2l_{2}^*}{r^{*}\sin ( \gamma^* + \frac{\pi}{3})}(r^{*})^2 f(\gamma^*) \,,
\end{align*} 
where
$$  f(x):=\cos(x - \tfrac{\pi}{3}) \big(  \frac{\sin (x+ \tfrac{\pi}{3}) - \sin( \tfrac{\pi}{3})}{\sin(x- \frac{\pi}{3})} \big)^2 + \cos( x)\,.  
$$
It is not hard to  show that the function $f$ is strictly decreasing.  Together with the fact that  this function vanishes at $\gamma^* = \tfrac{2 \pi}{3} $ we conclude $f> 0$ and so  $\int_{\Gamma_2^*}\! {v^{(5)}}_2 < 0$. A similar proof works for ${v^{(5)}}_3$. 
This completes the proof of  (i). 

The statement (ii) is an immediate consequence of  assertion (i). 
Similarly you can check that $c_2(v^{(5)}) < 0$ and $c_3(v^{(5)})>0$ which easily gives  (iii). 
\end{proof} 
\section{Deriving  the parabolic  system }\label{App: nonlocal}

For the normal velocity $V_i$ of $\Gamma_i(t):= \Gamma_{\rho_i, \mu_i}(t)$ we obtain with the convention  \eqref{DB:x and sigma} 
\begin{equation*}
\begin{aligned}
V_i (\sigma,t) &= \langle \partial_t  \Phi_i (\sigma, t),  n_i(\sigma, t) \rangle \\
&= \frac{1}{J_i}\big\langle \partial_w \Psi_i, R \partial_\sigma \Psi_i \big\rangle \partial_t \rho_i( \sigma, t) +  \big \langle \partial_r \Psi_i, n_i(\sigma , t)  \big \rangle \partial_t \mu_i ( \mathrm{pr}_i (\sigma) , t )  \quad \sigma \in \Gamma_i^*, 
\end{aligned}
\end{equation*}
where the unit normal
$n_i$ of $\Gamma_i(t) := \Gamma_{\rho_i, \mu_i}(t)$ is given by\begin{align} \label{Eq: D normal}
n_{i}( \sigma ,t ) &= \frac{1}{J_i} R \partial_\sigma \Phi_i(\sigma ,t) \nonumber \\ 
               &= \frac{1}{J_i} \big(R \partial_\sigma \Psi_i + R \partial_w \Psi_i \,\partial_\sigma \rho_i (\sigma ,t)\big) \quad \sigma \in \Gamma_i^* \,.
\end{align}
Here
\begin{equation*} 
J_i=J_i(\sigma, \rho_i , \mu_i ) := \abs{ \partial_\sigma \Phi_i} = \sqrt{\abs{ \partial_\sigma \Psi_i}^2 + 2 \langle \partial_\sigma \Psi_i, \partial_w \Psi_i \rangle \partial_\sigma \rho_i  + \abs{\partial_w \Psi_i}^2 ( \partial_\sigma \rho_i )^2},
\end{equation*}
and $R$ denotes the anti-clockwise rotation by $\pi/2$. Next computing  the curvature $\kappa_i( = \kappa_i (\sigma, \rho_i , \mu_i))$ of $\Gamma_i(t) : = \Gamma_{\rho_i, \mu_i}(t)$ we get
\begin{align} \label{A curvature}
     \kappa_i &= \frac{1}{{J_i}^3} \big \langle \partial_\sigma^2 \Phi_i , R \partial_\sigma \Phi_i \big \rangle \\
       &= \frac{1}{{J_i}^3} \Big [ \langle  \partial_w \Psi_i, R \partial_\sigma \Psi_i \rangle \partial_\sigma^2\rho_i + \big\{2 \big \langle \partial_{\sigma w}\Psi_i, R \partial_\sigma \Psi_i  \big \rangle +  \big \langle \partial_{\sigma}^2\Psi_i, R \partial_w \Psi_i \big \rangle \big \}\partial_\sigma \rho_i \notag \\
       &\hspace{42pt}+  \big \{ \big  \langle\partial_{w w}\Psi_i , R \partial_\sigma \Psi_i \big \rangle  + 2 \big \langle \partial_{\sigma w}\Psi_i, R \partial _w \Psi_i  \big \rangle \notag\\
& \hspace{4.5cm}+ \big \langle \partial_{ww}\Psi_i,  R \partial_w \Psi_i  \big \rangle  \partial_\sigma \rho_i \big \} ( \partial_\sigma \rho_i)^2   + \langle \Psi_{\sigma \sigma}, R \Psi_\sigma \rangle \Big ].\notag
 \end{align}
Therefore, the surface diffusion flow equations  can be reformulated as 
\begin{equation} \label{Eq: D SDF P}
 \partial_t \rho_i = a_i(\sigma, \rho_i , \mu_i ) \Delta (\sigma, \rho_i , \mu_i ) \kappa_i (\sigma, \rho_i , \mu_i ) + b_i(\sigma, \rho_i , \mu_i ) \partial_t \mu_i \,,  
\end{equation}
where  
\begin{align*}
a_i(\sigma, \rho_i , \mu_i ) &:= \frac{J_i(\sigma,\rho_{i}, \mu_{i})}{ \big\langle \partial_w \Psi_i , R \partial_\sigma \Psi_i \big \rangle} \:\Big( = \frac{1}{ \big \langle n_i^*(\sigma) , n_i(\sigma, t) \big \rangle}\Big)\,,\\[8pt]
b_i(\sigma,\rho_i , \mu_i ) &:=  \frac{\big \langle \partial_r \Psi_i , R \partial_ \sigma \Psi_i \big \rangle + \big \langle \partial_r \Psi_i, R \partial_w \Psi_i \big \rangle \partial_\sigma \rho_i}{-\big \langle \partial_w \Psi_i, R \partial_\sigma \Psi_i \big  \rangle} \Big(= \frac{\big \langle \tau_i^*(\sigma) , n_i ( \sigma , t ) \big \rangle}{\big \langle n_i^*(\sigma) , n_i(\sigma, t) \big \rangle} \, \, \Big) \,,\\[8pt]
 \quad  \Delta(\sigma, \rho_i ,\mu_i) v &:=  \frac{1}{J_i(\sigma, \rho_i, \mu_i)} \partial_\sigma \Big (  \frac{1}{J_i(\sigma, \rho_i, \mu_i)} \partial_ \sigma v \Big).
\end{align*}
Note that we have omitted the projection $\mathrm{pr}_i$ in the functions $\mu_i$  and the term   $ ( \sigma, \rho_i ( \sigma,t) , \mu_i ( \mathrm{pr}_i(\sigma),t)) $ in $ \partial_u \Psi_i  $ with $u \in \{ \sigma, w, \mu  \}$ to shorten the formulas. Furthermore note 
\begin{equation} \label{b_i}
b_i|_{\rho_i \equiv 0} = - \langle \tau^*_i , \pm n^*_i \rangle = 0 \,, \qquad a_i|_{\rho_i \equiv 0} = 1\,.
\end{equation}
We will  now  make use of the linear dependency  \eqref{Eq: matrix rho} to derive from the equations \eqref{Eq: D SDF P} evolution equations solely for the functions $\rho_i$. For  this, let us    rewrite \eqref{Eq: D SDF P} into
\begin{equation}\label{Eq1: suitable form of rho}
\partial_t \rho_i = \mathfrak{F}_i( \rho_i ,  \rho|_{\Sigma^*}) +  \mathfrak{B}_i ( \rho_i ,   \rho|_{\Sigma^*})\partial_t \big( \mathcal J \rho \circ \mathrm{pr}_i\big)_i  \quad \text{ in } \Gamma^*_i \,,
\end{equation}
where for $\sigma \in \Gamma_i^*$
\begin{align*}
\mathfrak{F}_i( \rho_i ,  \rho|_{\Sigma^*})( \sigma) &= a_i \big(\sigma, \rho_i ,  ( \mathcal J \rho|_{\Sigma^*})_i \big) \Delta \big( \sigma, \rho_i , ( \mathcal J \rho|_{\Sigma^*})_i \big) \kappa_i \big(\sigma, \rho_i , ( \mathcal J \rho|_{\Sigma^*})_i \big), \\
  \mathfrak{B}_i ( \rho_i ,   \rho|_{\Sigma^*})(\sigma) &= b_i \big( \sigma, \rho_i ,( \mathcal J \rho|_{\Sigma^*})_i \big), 
\end{align*}
and where we  used the linear dependency  \eqref{Eq: matrix rho}. By writing \eqref{Eq1: suitable form of rho} as a vector identity on $\Sigma^*$ we get 
\begin{equation}\label{Eq: suitable form of rho}
\partial_t \rho = \mathfrak{F}( \rho ,\rho|_{\Sigma^*}) +  \mathfrak{B} ( \rho ,   \rho|_{\Sigma^*}) \mathcal J  (\partial_t \rho)  \quad \text{ on } \Sigma^* \,,
\end{equation}
where we  employed the following notations
\begin{align*}
 \mathfrak{F}( \rho ,  \rho|_{\Sigma^*})(\sigma)    &:= \Big( \mathfrak{F}_i( \rho_i ,  \rho|_{\Sigma^*})( \sigma) \Big)_{i = 1,2,3} \quad \text{ for } \sigma \in \Sigma^* \,, \\
  \mathfrak{B} ( \rho ,   \rho|_{\Sigma^*})(\sigma) &:= \mathrm{diag} \Big(  \big(\mathfrak{B}_i ( \rho_i ,   \rho|_{\Sigma^*})(\sigma)\big)_{i =1, 2, 3} \Big) \quad \text{ for } \sigma \in \Sigma^* \,.   
\end{align*}
We rearrange to find
\begin{equation} \label{Eq: DB F(rho)} 
\big( I - \mathfrak{B} ( \rho ,\rho|_{\Sigma^*}) \mathcal J \big) \partial_t \rho = \mathfrak{F}( \rho ,  \rho|_{\Sigma^*}) \quad \text{ on } \Sigma^* \,.  
\end{equation}
Consequently we get 
$$
\partial_t \rho = \big( I - \mathfrak{B} ( \rho ,\rho|_{\Sigma^*}) \mathcal J \big)^{-1} \mathfrak{F}( \rho ,  \rho|_{\Sigma^*}) \quad \text{ on } \Sigma^*.
$$
According to  \eqref{b_i}, in some neighborhood of $\rho \equiv 0$  in $C^1(\Gamma^*)$ the  inverse $\big( I - \mathfrak{B} ( \rho ,\rho|_{\Sigma^*}) \mathcal J \big)^{-1}$ exists.
Inserting the above equation  into the equation \eqref{Eq: suitable form of rho} we  can finally reformulate the surface diffusion flow equations 
$$ 
V_i  = -\Delta_{\Gamma_i} \kappa_i  \quad \text{ on } \Gamma_i(t)
$$ as a   system of the  evolution equations  for  functions $\rho_i$ defined on fixed domains $ \Gamma_i^*$ (or equivalently on  $[-l_i^*, l_i^*]$)   
$$
\partial_t \rho_i = \mathfrak{F}_i( \rho_i ,  \rho|_{\Sigma^*}) +  \mathfrak{B}_i ( \rho_i ,   \rho|_{\Sigma^*}) \Big(  \big \{ \mathcal J  \big( I - \mathfrak{B} ( \rho ,\rho|_{\Sigma^*}) \mathcal J \big)^{-1} \mathfrak{F}( \rho ,  \rho|_{\Sigma^*})  \big \} \circ \mathrm{pr}_i \Big)_i. 
$$

Finally, we rewrite the boundary conditions at $\sigma \in \Sigma^*$ as
\begin{equation*}
\begin{aligned} 
\mathfrak{G}_1( \rho) ( \sigma)&:= \rho_1 ( \sigma) + \rho_2 (\sigma) + \rho_3( \sigma) = 0 \,,  \\[6pt]
\mathfrak{G}_2( \rho) (\sigma) &: =  \langle n_1( \sigma) , n_2( \sigma) \rangle - \cos  \tfrac{2 \pi}{3} \\
&\,{} =   \big \langle    \frac{1}{J_1} ( \partial_\sigma \Psi_1 +  \partial_w \Psi_1 \,\partial_\sigma \rho_1 ) \,,\,  \frac{1}{J_2} ( \partial_\sigma \Psi_2 +  \partial_w \Psi_2 \,\partial_\sigma \rho_2 ) \big\rangle - \cos  \tfrac{2 \pi}{3} = 0 \,, \\[6pt] 
\mathfrak{G}_3 ( \rho) (\sigma) &: =  \langle n_2 ( \sigma), n_3( \sigma) \rangle - \cos  \tfrac{2 \pi}{3} \\
&\,{} =   \big \langle    \frac{1}{J_2} (\partial_\sigma \Psi_2 +  \partial_w \Psi_2 \,\partial_\sigma \rho_2 ) \,,\,  \frac{1}{J_3} ( \partial_\sigma \Psi_3 +  \partial_w \Psi_3 \,\partial_\sigma \rho_3 ) \big\rangle - \cos  \tfrac{2 \pi}{3} = 0 \,,\\
\mathfrak G_4 (\rho)( \sigma ) &:= \sum_{i = 1}^3\kappa_i \big(\sigma, \rho_i , ( \mathcal J \rho|_{\Sigma^*})_i \big) =0 \,,   \\
\mathfrak  G_5(\rho)( \sigma ) &:=\frac{1}{J_1} \partial_ \sigma \big(\kappa_1
\big(\sigma, \rho_1 , ( \mathcal J \rho|_{\Sigma^*})_1 \big) - \frac{1}{J_2} \partial_ \sigma \big(\kappa_2 \big(\sigma, \rho_2 , ( \mathcal J \rho|_{\Sigma^*})_2 \big) = 0 \,,  \\
\mathfrak G_6 ( \rho) ( \sigma) &:= \frac{1}{J_2} \partial_ \sigma \big(\kappa_2 \big(\sigma, \rho_2 , ( \mathcal J \rho|_{\Sigma^*})_2 \big) - \frac{1}{J_3} \partial_ \sigma \big(\kappa_3 \big(\sigma, \rho_3 , ( \mathcal J \rho |_{\Sigma^*})_3 \big) = 0  \,. 
\end{aligned}
\end{equation*}
We emphasize that  the operators $\mathfrak{G}_i$ $(i = 1,\dots , 5)$ are purely local due to the fact that the projections $\mathrm{pr}_i$ act as the identity on their image $\Sigma^*$.

\end{appendices}

\bibliographystyle{plain}
\bibliography{paper}
\end{document}